\newtheorem{thm}{Theorem}[section]
\newtheorem{cor}[thm]{Corollary}
\newtheorem{lem}[thm]{Lemma}
\newtheorem{prop}[thm]{Proposition}
\newtheorem{claim}[thm]{Claim}
\theoremstyle{definition}
\newtheorem{definition}[thm]{Definition}
\newtheorem{ex}[thm]{Example}
\newtheorem{exs}[thm]{Examples}
\theoremstyle{remark}
\newtheorem{rem}[thm]{Remark}
\newtheorem{problem}{Problem}[section]
\newtheorem{subclaim}[thm]{Subclaim}
\newtheorem{question}[problem]{Question}
\DeclarePairedDelimiter{\card}{\lvert}{\rvert} 
\DeclareMathOperator{\rank}{rank} 
\DeclareMathOperator{\minrank}{minrank} 
\DeclareMathOperator{\Span}{span}
\DeclareMathOperator{\ddgeo}{dim_{geom}} 
\DeclareMathOperator{\ddBool}{dim_{Bool}}
\DeclareMathOperator{\ddsymp}{dim_{symp}}
\DeclareMathOperator{\ind}{ind_{2}}
\DeclareMathOperator{\ddinn}{dim_{inn}}
\newcommand{\NN}{{\mathbb N}}
\definecolor{darkblue}{rgb}{0.0,0.0,0.3}
\definecolor{darkmagneta}{rgb}{0.5,0.0,0.5}
\definecolor{darkred}{rgb}{0.5,0.0,0.0}
\author{Maurice Pouzet\affiliationmark{1,2}
  \and Hamza Si Kaddour\affiliationmark{1}
  \and Bhalchandra D. Thatte\affiliationmark{3}\thanks{Supported by CAPES Brazil (Processo: 88887.364676/2019-00);
  the stay of this author was supported by LABEX MILYON (ANR-10-LABX-0070) of
  Universit\'e de Lyon within the program "Investissements d'Avenir
  (ANR-11-IDEX-0007)" operated by the French National Research Agency (ANR)}}
\title[On the Boolean dimension of a graph and other related
  parameters]{On the Boolean dimension of a graph \\ and other related
  parameters}
\affiliation{
  Univ. Lyon, Universit\'e Claude-Bernard Lyon1, CNRS UMR 5208, Institut Camille Jordan, 43, Bd. du 11 Novembre 1918, 69622
Villeurbanne, France \\
  Department of Mathematics and Statistics, University of Calgary, Calgary, Alberta, Canada\\
  Departamento de
  Matematica, Universidade Federal de Minas Gerais (UFMG), Av. Antonio Carlos,
  6627, Caixa Postal 702, Região Pampulha, Belo Horizonte - MG, CEP: 31270-901,
  Brasil}
\keywords{graphs, Boolean sum, symplectic dimension, geometric dimension, tournaments, inversion index}
\begin{document}
\publicationdetails{23}{2022}{2}{5}{7437}
\maketitle
\begin{abstract}
  We present the Boolean dimension of a graph, we relate it with
  the notions of inner, geometric and symplectic dimensions, and with the rank
  and minrank of a graph. We obtain an exact formula for the Boolean dimension
  of a tree in terms of a certain star decomposition. We relate the Boolean
  dimension with the inversion index of a tournament.
\end{abstract}

%
%



\section{Presentation and preliminaries}

We define the notion of Boolean dimension of a graph, as it appears in
\citet{bbbp2010} (see also \citep{belkhechine,bbbp2012}). We present the
notions of geometric and symplectic dimensions, and the rank and minrank of a
graph, which have been considered earlier.  When finite, the Boolean dimension
corresponds to the inner dimension; it plays an intermediate role between the
geometric and symplectic dimensions, and does not seem to have been considered
earlier. The notion of Boolean dimension was introduced in order to study
tournaments and their reduction to acyclic tournaments by means of
inversions. The key concept is the inversion index of a tournament
\citep{belkhechine, bbbp2010, bbbp2012} presented in
Section~\ref{section:inversionindex}. Our main results are an exact formula for
the Boolean dimension of a tree in terms of a certain star decomposition
(Theorem~\ref{thm:trees}) and the computation of the inversion index of an
acyclic sum of $3$-cycles (Theorem~\ref{thm:threecycle}).

Notations in this paper are quite elementary. The \emph{diagonal} of a set $X$
is the set $\Delta_X:= \{(x,x): x\in X\}$. We denote by $\powerset (X)$ the
collection of subsets of $X$, by $X^m$ the set of $m$-tuples $(x_1, \dots, x_m)$
of elements in $X$, by $[X]^m$ the $m$-element subsets of $X$, and by
$[X]^{<\omega}$ the collection of finite subsets of $X$. The cardinality of $X$
is denoted by $\card{X}$. We denote by $\aleph_0$ the first infinite cardinal,
by $\aleph_1$ the first uncountable cardinal, and by $\omega_1$ the first
uncountable ordinal. A cardinal $\kappa$ is \emph{regular} if no set $X$ of
cardinal $\kappa$ can be divided in strictly less than $\kappa$ subsets, all of
cardinality strictly less than $\kappa$. If $\kappa$ denotes a cardinal,
$2^{\kappa}$ is the cardinality of the power set $\powerset (X)$ of any set $X$
of cardinality $\kappa$. If $\kappa$ is an infinite cardinal, we set
$\log_2(\kappa)$ for the least cardinal $\mu$ such that $\kappa \leq
2^{\mu}$. We note that for an uncountable cardinal $\kappa$ the equality
$\log_2(2^{\kappa})= \kappa$ may require some set theoretical axioms, such as
the Generalized Continuum Hypothesis (GCH). If $\kappa$ is an integer, we use
$\log_2(\kappa)$ in the ordinary sense, hence the least integer $\mu$ such that
$\kappa \leq 2^{\mu} $ is $\lceil \log_2\kappa \rceil$.  We refer the reader to
\citet{jech} and \citet{kunen} for further background about axioms of set theory
if needed.

The graphs we consider are undirected and have no loops. They do not need to be
finite, but our main results are for finite graphs. A \emph{graph} is a pair
$(V, E)$ where $E$ is a subset of $[V]^2$, the set of $2$-element subsets of
$V$. Elements of $V$ are the \emph{vertices} and elements of $E$ are the
\emph{edges}. Given a graph $G$, we denote by $V(G)$ its vertex set and by
$E(G)$ its edge set. For $u,v\in V(G)$, we write $u\sim v$ and say that $u$ and
$v$ are \emph{adjacent} if there is an edge joining $u$ and $v$.  The
\emph{neighbourhood} of a vertex $u$ in $G$ is the set $N_G(u)$ of vertices
adjacent to $u$. The \emph{degree} $d_G(u)$ of a vertex $u$ is the cardinality
of $N_G(u)$. If $X$ is a subset of $V(G)$, the \emph{subgraph of $G$ induced by
  $X$} is $G_{\restriction X}:= (X, E\cap [X]^2)$. A \emph {clique} in a graph
$G$ is a set $X$ of vertices such that any two distinct vertices in $X$ are
adjacent. If $X$ is a subset of a set $V$, we set $K^V_{X}:= (V, [X]^2)$; we say
also that this graph is a clique.

\subsection{The Boolean sum of graphs and the Boolean dimension of a graph}
Let $(G_i)_{i\in I}$ be a family of graphs, all with the same vertex set
$V$. The \emph{Boolean sum} of this family is the graph, denoted by
$\dot {+} (G_i)_{i\in I}$, with vertex set $V$ such that an unordered pair
$e:= \{x,y\}$ of distinct elements of $V$ is an edge if and only if it belongs
to a finite and odd number of $E(G_i)$. If the family consists of two elements,
say $(G_i)_{i\in \{0,1\}}$ we denote this sum by $G_{0}\dot{+} G_{1}$. This is
an associative operation (but, beware, infinite sums are not associative). If
each $E(G_i)$ is the set of edges of some clique $C_i$, we say (a bit
improperly) that $\dot {+} (G_i)_{i\in I}$ is a sum of cliques. We define the
\emph{ Boolean dimension} of a graph $G$, which we denote by $\ddBool(G)$, as
the least cardinal $\kappa$ such that $G$ is a Boolean sum of $\kappa$
cliques. In all, $\ddBool (G)=\kappa$ if there is a family of $\kappa$ subsets
$(C_i)_{i\in I}$ of $V(G)$, and not less, such that an unordered pair
$e:= \{x,y\}$ of distinct elements is an edge of $G$ if and only if it is
included in a finite and odd number of $C_i$'s.

A \emph{Boolean representation} of a graph $G$ in a set $E$ is a map
$f: V(G)\rightarrow \powerset (E)$ such that an unordered pair $e:= \{x,y\}$ of
distinct elements is an edge of $G$ if and only if the intersection
$f(x)\cap f(y)$ is finite and has an odd number of elements.

\begin{ex}\label{example:1}
  Let $G$ be a graph. For a vertex $x\in V(G)$, let
  $E_G(x):=\{e\in E(G): x\in e\}$.  Set $E:= E(G)$. Then the map
  $f:V(G)\rightarrow \powerset (E)$ defined by $f(x):=E_G(x)$ is a Boolean
  representation. Indeed, for every $2$-element subset $e:=\{x , y\}$ of $V(G)$,
  the intersection $f(x)\cap f(y)$ has one element if and only if $e\in E(G)$, otherwise it
  is empty.
\end{ex}

The following result is immediate, still it has some importance. 

\begin{prop} A graph $G$ is a Boolean sum of $\kappa$ cliques if and only if $G$
  has a Boolean representation in a set of cardinality $\kappa$.
\end{prop}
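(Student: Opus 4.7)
The plan is to observe that the proposition is a purely formal duality: cliques are indexed by a set $I$, and Boolean representations use a ground set $E$, and the two sets play the same role. So I would prove both directions by making this correspondence explicit.

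For the forward direction, suppose $G = \dot{+}_{i\in I}\, K^V_{C_i}$ with $|I|=\kappa$. I would set $E:=I$ and define $f\colon V\to \powerset(E)$ by
\[
f(x):=\{i\in I : x\in C_i\}.
\]
Then for any two distinct $x,y\in V$ and any $i\in I$, the pair $\{x,y\}$ is an edge of $K^V_{C_i}$ if and only if $\{x,y\}\subseteq C_i$, which is exactly the condition $i\in f(x)\cap f(y)$. Hence the cardinality of $f(x)\cap f(y)$ equals the number of indices $i$ for which $\{x,y\}$ is an edge of $K^V_{C_i}$. By the definition of the Boolean sum, $\{x,y\}\in E(G)$ iff this number is finite and odd, so $f$ is a Boolean representation in a set of cardinality $\kappa$.

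For the converse, suppose $f\colon V\to \powerset(E)$ is a Boolean representation with $|E|=\kappa$. I would now reverse the construction: for each $e\in E$, set
\[
C_e:=\{x\in V : e\in f(x)\},
\]
and consider the family of cliques $(K^V_{C_e})_{e\in E}$. Exactly as above, for distinct $x,y\in V$, the pair $\{x,y\}$ is an edge of $K^V_{C_e}$ iff $e\in f(x)\cap f(y)$, so the number of cliques in the family containing $\{x,y\}$ as an edge equals $|f(x)\cap f(y)|$. Since $f$ is a Boolean representation, this number is finite and odd iff $\{x,y\}\in E(G)$, which means $G=\dot{+}_{e\in E}\, K^V_{C_e}$, a Boolean sum of $\kappa$ cliques.

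There is no real obstacle here; the only point worth flagging is that the finiteness clause in the definition of the Boolean sum matches perfectly with the finiteness clause in the definition of a Boolean representation (both demand that $|f(x)\cap f(y)|$, respectively the number of cliques containing the edge, be finite before asking about parity). Once the two constructions $C_i \leftrightarrow f$ are written down, verifying the equivalence on each pair $\{x,y\}$ is a one-line check, so the proposition follows.
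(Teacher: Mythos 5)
Your proof is correct and follows essentially the same route as the paper: both directions use the exact correspondence $f(x)=\{i: x\in C_i\}$ and $C_e=\{x: e\in f(x)\}$ between a family of cliques indexed by $I$ and a representation in a ground set $E$. The paper states this more tersely, while you spell out the pairwise verification, but the argument is identical.
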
 
\begin{proof} If $G$ is the Boolean sum of a family $(C_i)_{i\in I}$ of $\kappa$
  cliques, then let $f: V(G) \rightarrow \powerset (I)$ defined by setting
  $f(x):= \{i\in I: x\in C_i\}$. This defines a Boolean representation in
  $I$. Conversely, if $f: V(G) \rightarrow \powerset (E)$ is a Boolean
  representation in a set $E$, then set $C_i: = \{ x\in V(G): i\in f(x)\}$ for
  $i\in E$. Then $G$ is the Boolean sum of the family $(G_i)_{i\in E}$, where
  $G_i:= (V(G), [C_i]^2)$ for each $i\in E$.
\end{proof}

We note that the Boolean dimension of a graph and of the graph obtained by
removing some isolated vertices are the same. Hence $\ddBool (G)=1$ if and only
if it is of the form $G= K^V_X$ with $\vert X\vert\geq2$. Since every graph
$G:= (V, E)$ can be viewed as the Boolean sum of its edges, the Boolean
dimension of $G$ is always defined, and is at most the number of edges, that is,
at most the cardinality $\card{[V]^2}$ of $ [V]^2$. If $V$ is infinite, then
$\card{[V]^2}= \card{V}$; hence $\ddBool(G)\leq \vert V\vert $ (but see
Question~\ref{question1.1} below). If $V$ is finite, with $n$ elements, then
$\ddBool(G)\leq n-1$ (see \cite{bbbp2010}. By induction on $n$: pick $x\in V$ and
observe that 
$G= (V, E\setminus \{e \in E : x\in e\})\dot{+} K^V_{N_{G}(x)}\dot{+}
K^V_{N_{G}(x)\cup \{x\}}$). In fact, paths on $n$ vertices are the only
$n$-vertex graphs with Boolean dimension $n-1$, see Theorem~\ref{thm:paths}, a
result that requires some ingredients developed below.

Recall that a \emph{module} in a graph $G$ is any subset $A$ of $V(G)$ such that
for every $a,a'\in A$ and $b\in V(G)\setminus A$, we have $a\sim b$ if and only
if $a'\sim b$. A \emph{duo} is any two-element module (e.g., see
\citet{courcelle-delhomme} for an account of the modular decomposition of
graphs).
\begin{lem}\label{lem:onetoone}
  If a graph $G$ has no duo then every Boolean representation is one to one. In
  particular, $\ddBool(G)\geq \log_2(\card{V(G)})$.
\end{lem}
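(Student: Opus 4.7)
The plan is to contrapositively show that any Boolean representation that fails to be one-to-one forces the existence of a duo in $G$. Suppose $f\colon V(G)\to\powerset(E)$ is a Boolean representation with $f(x)=f(y)$ for some distinct $x,y\in V(G)$. I would then pick an arbitrary $z\in V(G)\setminus\{x,y\}$ and use the defining property of a Boolean representation applied to the pairs $\{x,z\}$ and $\{y,z\}$: since $f(x)\cap f(z)=f(y)\cap f(z)$, the cardinality $|f(x)\cap f(z)|$ is finite and odd if and only if $|f(y)\cap f(z)|$ is, which translates directly into $x\sim z\iff y\sim z$. As $z$ was arbitrary in $V(G)\setminus\{x,y\}$, the pair $\{x,y\}$ satisfies the definition of a module, and being of size two it is a duo, contradicting the hypothesis. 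Hence every Boolean representation of $G$ is injective.

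For the ``in particular'' clause, I would recall from the preceding Proposition that $\ddBool(G)=\kappa$ means $G$ admits a Boolean representation $f\colon V(G)\to\powerset(E)$ with $|E|=\kappa$. Since $f$ is injective by the first part, $|V(G)|\leq|\powerset(E)|=2^{\kappa}$, and by the definition of $\log_2$ given in the preliminaries (valid in both the finite and infinite case), this yields $\log_2(|V(G)|)\leq\kappa=\ddBool(G)$.

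I do not foresee any real obstacle: the argument is essentially a one-line symmetry observation about $f(x)\cap f(z)$, followed by the definition chase. The only thing to be slightly careful about is that the definition of a module in $G$ only refers to vertices outside the module, so there is no need to worry about whether or not $x\sim y$; and that the cardinal inequality $|V(G)|\leq 2^{\kappa}$ is interpreted correctly in the possibly infinite setting, using the convention for $\log_2$ already fixed in the preliminaries.
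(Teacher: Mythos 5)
Your proof is correct and follows essentially the same idea as the paper's: equal images under $f$ force identical intersections with every third vertex's image, hence identical adjacency, producing a module. The paper phrases this by observing that each fiber $f^{-1}(v)$ is a module which is a clique or an independent set (from which a duo is extracted), whereas you argue directly that the pair $\{x,y\}$ is itself a duo --- a slight streamlining, since the definition of module only constrains vertices outside the set; the cardinality conclusion is handled the same way in both.
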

\begin{proof}
  Observe that if $f$ is a representation and $v$ is in the range of $f$, then
  $f^{-1} (v)$ is a module and this module is either a clique or an independent
  set.
\end{proof}

\begin{question}\label{question1.1}
  The inequality in Lemma~\ref{lem:onetoone} may be strict if $V(G)$ is finite.
  Does $\ddBool(G)\leq \log_2(\vert V(G)\vert)$ when $V(G)$ is infinite? The
  answer may depend on some set theoretical hypothesis (see
  Example~\ref{exs:infiniteddbool}).  But we do not known if the Boolean
  dimension of every graph on at most a continuum of vertices is at most
  countable.  Same question may be considered for trees.
\end{question}

Let $E$ be a set; denote by $O(E)^{\neg \bot}$ the graph whose vertices are the
subsets of $E$, two vertices $X$ and $Y$ being linked by an edge if they are
distinct and their intersection is finite and odd. If $\kappa$ is a cardinal, we
set $O(\kappa)^{\neg \bot}$ for any graph isomorphic to $O(E)^{\neg \bot}$,
where $E$ is a set of cardinality $\kappa$.

\begin{thm}\label{thm:booleandim} A graph $G$ with no duo has Boolean dimension
  at most $\kappa$ if and only if it is embeddable in $O(\kappa)^{\neg
    \bot}$. The Boolean dimension of $O(\kappa)^{\neg \bot}$ is at most
  $\kappa$. It is equal to $\kappa$ if $\kappa \geq 2$ and $\kappa$ is at most
  countable, or if $\kappa$ is uncountable and (GCH) holds.
\end{thm}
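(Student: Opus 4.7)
The proof splits into three parts. For the first equivalence, invoking Lemma~\ref{lem:onetoone} is the key idea: given $G$ with no duo and a Boolean representation $f\colon V(G) \to \powerset(E)$ with $|E| \le \kappa$, the lemma forces $f$ to be injective, and the defining property of $f$ is literally the edge relation of $O(E)^{\neg\bot}$, so $f$ is an embedding of $G$ into $O(E)^{\neg\bot}$, hence into $O(\kappa)^{\neg\bot}$ after enlarging $E$ (enlarging does not change adjacencies between subsets of the original $E$). Conversely, any embedding $G \hookrightarrow O(\kappa)^{\neg\bot}$ is by definition a Boolean representation on a ground set of size $\kappa$, so $\ddBool(G) \le \kappa$. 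The bound $\ddBool(O(\kappa)^{\neg\bot}) \le \kappa$ is witnessed immediately by the identity map $\id$ on $\powerset(E)$, which is a Boolean representation by the very definition of $O(E)^{\neg\bot}$.

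For the matching lower bound, the plan is to show first that $O(\kappa)^{\neg\bot}$ has no duo whenever $\kappa \ge 3$, so that Lemma~\ref{lem:onetoone} will yield $\ddBool(O(\kappa)^{\neg\bot}) \ge \log_2|V(O(\kappa)^{\neg\bot})| = \log_2(2^{\kappa})$. To verify the no-duo property, I would suppose that $\{X,Y\}$ is a duo and pick $z_0 \in X \mathbin{\triangle} Y$, say $z_0 \in X \setminus Y$. Testing the singleton $Z := \{z_0\}$ against the module condition, combined with $z_0 \notin Y$, forces $X = \{z_0\}$. A parallel argument using $Z := \{z_1\}$ for some $z_1 \in Y$ (or $Z := \{z_0,z_1\}$ if $Y = \emptyset$) then forces $Y = \{z_1\}$ with $z_1 \ne z_0$. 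Finally, with $\kappa \ge 3$ I can pick $z_2 \notin \{z_0,z_1\}$ and observe that $Z := \{z_0,z_2\}$ is adjacent to $X$ but not to $Y$ while differing from both, yielding a contradiction. I expect this no-duo verification to be the most delicate step, since the two singletons $\{z_0\},\{z_1\}$ just barely fail to be a duo and the argument only closes once a third ground-set element is available.

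It then remains to evaluate $\log_2(2^{\kappa})$ under the stated hypotheses: it equals $\kappa$ when $\kappa$ is a positive integer (by the paper's definition of $\log_2$ on integers), when $\kappa = \aleph_0$ (since $2^\mu$ is finite for every finite $\mu$), and when $\kappa$ is uncountable under (GCH) (since then $2^\kappa = \kappa^+$, while any $\mu < \kappa$ gives $2^\mu \le \mu^+ \le \kappa < \kappa^+$, forcing $\log_2(\kappa^+) = \kappa$). The special sub-case $\kappa = 2$, in which $\{\{a\},\{b\}\}$ actually does form a duo, must be handled separately: direct inspection shows that $O(2)^{\neg\bot}$, on the four vertices $\emptyset, \{a\},\{b\},\{a,b\}$, has only the two edges $\{a\}\sim\{a,b\}$ and $\{b\}\sim\{a,b\}$, so it is a path on three vertices together with an isolated vertex; being a non-clique Boolean sum of two edges, its Boolean dimension is exactly $2 = \kappa$.
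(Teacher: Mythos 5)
Your proof is correct and follows essentially the same route as the paper: Lemma~\ref{lem:onetoone} gives both the embedding equivalence and the lower bound $\log_2(2^{\kappa})$, the identity map gives the upper bound, $\kappa=2$ is settled by inspection, and the no-duo property for $\kappa\ge 3$ is verified with witness sets of at most two elements. The only cosmetic difference is that the paper phrases the no-duo step as a positive separation claim (any two distinct subsets $A,B$ are distinguished by some $C$ with $\card{C}\le 2$), whereas you run the same case analysis by contradiction, first forcing both members of a putative duo to be singletons.
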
  
\begin{proof}
  If there is an embedding $f$ from $G$ in a graph of the form
  $O(E)^{\neg \bot}$, then $f$ is a Boolean representation of $G$, hence
  $\ddBool (G)\leq \vert E\vert$. Conversely, if $G$ has no duo and has a
  Boolean representation $f$ in a set $E$ then, by Lemma~\ref{lem:onetoone}, $f$
  is an embedding of $G$ in $O(E)^{\neg \bot}$.  Let $E$ be a set of cardinality
  $\kappa$. For each $X\in V(O(E)^{\neg \bot})=\powerset (E)$ set $f(X):= X$ viewed as a subset
  of $\powerset (E)$. The map $f$ is a Boolean representation, hence
  $\ddBool O(E)^{\neg \bot} \leq \kappa$. Alternatively, set
  $C_i:= \{X\in \powerset (E): i\in X\}$ for each $i\in E$. Then
  $O(E)^{\neg \bot}$ is the Boolean sum of the $[C_i]^2, i \in E$.  If
  $\kappa = 2$, a simple inspection shows that the Boolean dimension of
  $O(E)^{\neg \bot}$ is $\kappa$. If $\kappa \geq 3$ then $O(E)^{\neg \bot}$ has
  no duo. This relies on the following claim.
 \begin{claim}
   If $A, B$ are two distinct subsets of $E$, then there is a subset $C$ of $E$,
   distinct from $A$ and $B$, with at most two elements such that the
   cardinalities of the sets $A\cap C$ and $B\cap C$ cannot have the same
   parity.
 \end{claim}
 Indeed, we may suppose that $A \not \subseteq B$. Pick $x\in A\setminus B$. If
 $\vert A\vert >1$, then set $C:= \{x\}$. If not, then $A= \{x\}$. In this case,
 either $B$ is empty and $C:= \{x, y\}$, with $y\not =x$ will do, or $B$ is
 nonempty, in which case, we may set $C:= \{y\}$, where $y\in B$ if
 $\vert B\vert >1$, or $C:= \{y, z\}$, where $B=\{y\}$ and
 $z\in E\setminus (A\cup B)$.
 
 Since $O(E)^{\neg \bot}$ has no duo, Lemma~\ref {lem:onetoone} ensures that
 $\ddBool(O(E)^{\neg \bot})\geq \log_2(\vert V(O(E))\vert )=
 \log_2(2^{\kappa})$. If $\kappa$ is at most countable, or $\kappa$ is
 uncountable and (GCH) holds, then this last quantity is $\kappa$. This
 completes the proof of the theorem.  \end{proof}

We can obtain the same conclusion with a weaker hypothesis than (GCH).

\begin{lem}\label{lem:weaker} Let $\kappa$ be an infinite cardinal. If
  $\mu^{\omega}< \kappa$ for every $\mu<\kappa$, then
  $\ddBool(O(\kappa)^{\neg \bot})=\kappa$.
\end{lem}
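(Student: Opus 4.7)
The upper bound $\ddBool(O(\kappa)^{\neg \bot})\le\kappa$ is already part of Theorem~\ref{thm:booleandim}, so the task is the matching lower bound. I would argue by contradiction: suppose there is a Boolean representation $f\colon\powerset(E)\to\powerset(F)$ with $\card{E}=\kappa$ and $\card{F}=\lambda<\kappa$. The plan is to exhibit a $\kappa$-clique in $O(\kappa)^{\neg\bot}$ whose image under $f$ is forced to be an almost-disjoint family of the kind ruled out by the hypothesis $\lambda^\omega<\kappa$.

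Fix $e_0\in E$ and consider the vertices $\{e_0,e\}$ for $e\in E\setminus\{e_0\}$. Any two distinct such pairs meet in $\{e_0\}$, which is finite and of odd size, so they form a clique of size $\kappa$. Set $A_e:=f(\{e_0,e\})$. Because $O(\kappa)^{\neg\bot}$ has no duo (the claim inside the proof of Theorem~\ref{thm:booleandim} already yields this for $\kappa\ge3$), $f$ is one-to-one by Lemma~\ref{lem:onetoone}, so the $A_e$'s are pairwise distinct; and because the $\{e_0,e\}$'s form a clique, every intersection $A_e\cap A_{e'}$ with $e\ne e'$ is finite.

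The contradiction then proceeds in two steps. If $\lambda$ were finite, $\powerset(F)$ would have only $2^\lambda<\aleph_0\le\kappa$ elements, contradicting the existence of $\kappa$ distinct $A_e$'s; so I may assume $\lambda$ is infinite. Then $\card{[F]^{<\omega}}=\lambda<\kappa$, so fewer than $\kappa$ of the $A_e$'s are finite, and there is $J\subseteq E\setminus\{e_0\}$ with $\card{J}=\kappa$ along which every $A_e$ is infinite. For each such $e$ pick a countably infinite $B_e\subseteq A_e$. The number of countably infinite subsets of $F$ is $\lambda^\omega$, which by hypothesis is strictly less than $\kappa$, so by pigeonhole some two distinct $e,e'\in J$ satisfy $B_e=B_{e'}$. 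Then $B_e\subseteq A_e\cap A_{e'}$ forces $\card{A_e\cap A_{e'}}\ge\aleph_0$, contradicting the finiteness just established.

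The heart of the argument is this final pigeonhole on countably infinite subsets of $F$; it is precisely where the weakening of (GCH) to the hypothesis $\mu^\omega<\kappa$ for all $\mu<\kappa$ is used, and it accounts for the possibility that $\log_2(2^\kappa)$ might be strictly smaller than $\kappa$. The other important choice is the clique itself: using a common vertex $e_0$ makes every pairwise intersection of clique-vertices a fixed singleton, and after applying $f$ this becomes the almost-disjointness constraint that the hypothesis $\lambda^\omega<\kappa$ cannot support.
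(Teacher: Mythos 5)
Your proof is correct and follows essentially the same route as the paper's: both reduce the lower bound to the fact that a $\kappa$-clique of $O(\kappa)^{\neg\bot}$ must map to an almost disjoint family of $\kappa$ distinct subsets of a set of size $\lambda<\kappa$, and both derive the contradiction by choosing countably infinite subsets and applying the pigeonhole bound $\lambda^{\omega}<\kappa$ (the paper packages this as Claims~\ref{claim:clique} and~\ref{deltasystem}, while you inline it and make the clique $\{e_0,e\}$, $e\in E$, explicit). The extra care you take with the case $\lambda$ finite and with injectivity of $f$ via Lemma~\ref{lem:onetoone} is sound and consistent with the paper's argument.
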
 

\begin{proof} 
The proof relies on the following claim, which is of independent interest. 
\begin{claim}\label{claim:clique}

  Let $\mu^{\omega}$ be the cardinality of the set of countable subsets of an
  infinite cardinal $\mu$. Then the cliques in $O(\mu)^{\neg \bot}$ have
  cardinality at most $\mu^{\omega}$.
\end{claim}
The proof relies on a property of almost disjoint families. Let us recall that
an \emph{almost disjoint family} is a family
$\mathcal A:= (A_{\alpha})_{\alpha\in I}$ of sets such that the intersection
$A_{\alpha} \cap A_{\beta}$ is finite for $\alpha\not =\beta$. Note that if
$\mathcal C$ is a clique in $O(\mu)^{\neg \bot}$, then for every pair of
distinct sets $X, Y$ in $\mathcal C$, the intersection $X\cap Y$ is finite and
its cardinality is odd. Hence, $\mathcal C$ is an almost disjoint family.

To prove our claim it suffices to prove the following claim, well known by set
theorists.
\begin{claim}\label{deltasystem}
  There is no almost disjoint family of more that $\mu^\omega$ subsets of an
  infinite set of cardinality $\mu$.
\end{claim}

\noindent{\bf Proof of Claim~\ref{deltasystem}.} Suppose that such a family
$\mathcal A:=(A_{\alpha})_{\alpha\in I}$ exists, with
$\vert I\vert >\mu^{\omega}$. Since $\mu^{<\omega} = \mu$, we may suppose that
each $A_{\alpha}$ is infinite and then select a countable subset $B_{\alpha}$ of
$A_{\alpha}$. The family $\mathcal B:= (B_{\alpha})_{\alpha\in I}$ is almost
disjoint, but since $\vert I\vert >\mu^{\omega}$, there are
$\alpha \not = \beta$ such that $B_{\alpha} = B_{\beta}$, hence
$B_{\alpha}\cap B_{\beta}$ is infinite, contradicting the fact that $\mathcal B$
is an almost disjoint family.  \hfill $\Box$

Now the proof of the lemma goes as follows. Suppose that
$\ddBool(O(\kappa)^{\neg \bot}= \mu< \kappa$. Then there is an embedding from
the graph $O(\kappa)^{\neg \bot}$ into the graph $O(\mu)^{\neg \bot}$.
Trivially, $O(\kappa)^{\neg \bot}$ contains cliques of cardinality at least
$\kappa$. Hence $O(\mu)^{\neg \bot}$ too. But since $\mu^{\omega}< \kappa$,
Claim~\ref{claim:clique} says that this is impossible. Thus
$\ddBool(O(\kappa)^{\neg \bot})= \kappa$.
\end{proof}

We thank \citet{avraham}  for providing Claim~\ref{deltasystem}. 

\begin{exs}\label{exs:infiniteddbool}
  For a simple illustration of Lemma~\ref{lem:weaker}, take
  $\kappa= (2^{\aleph_0})^+$ the successor of $2^{\aleph_0}$. For an example,
  negating (GCH), suppose $\omega_1= 2^{\aleph_0}$, $\kappa= \omega_2$,
  $\omega_3= 2^{\omega_1} = 2^{\omega_2}$. In this case,
  $\ddBool(O(\kappa)^{\neg \bot})= \kappa$ and
  $\log_2(2^{\kappa})= \omega_1< \kappa$.
\end{exs}

\begin{question} Does the equality $\ddBool(O(\kappa)^{\neg \bot})=\kappa$ hold
  without any set theoretical hypothesis?
\end{question}

\begin{rem} Theorem~\ref{thm:booleandim} asserts that $O(\kappa)^{\neg \bot}$ is
  universal among graphs with no duo of Boolean dimension at most $\kappa$ (that
  is embeds all graphs with no duo of dimension at most $\kappa$), but we do not
  know which graphs on at most $2^{\kappa}$ vertices embed in
  $O(\kappa)^{\neg \bot}$.\end{rem}

In contrast with Claim~\ref{claim:clique} we have:
\begin{lem} \label{lem:infinite trees} For an infinite cardinal $\kappa$, the
  graph $O(\kappa)^{\neg \bot}$ embeds a graph made of $2^{\kappa}$ disjoint
  edges. It embeds also some trees made of $2^{\kappa}$ vertices.
\end{lem}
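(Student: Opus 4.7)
The plan is to give two explicit induced-subgraph constructions inside $O(\kappa)^{\neg \bot}$. Throughout, $E$ denotes a set of cardinality $\kappa$.

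For the tree assertion, I will exhibit an induced copy of the star $K_{1, 2^\kappa}$. Partition $E = \{e_0\} \sqcup F \sqcup G$ with $|F| = |G| = \kappa$. Take the vertex $\{e_0\}$ as the center, and for each $S \subseteq G$ take $D_S := \{e_0\} \cup F \cup S$ as a leaf. The $2^\kappa$ leaves are pairwise distinct. The center is joined to each leaf because $\{e_0\} \cap D_S = \{e_0\}$ is an odd singleton, and the leaves form an independent set because $D_S \cap D_{S'} \supseteq F \cup \{e_0\}$ is infinite whenever $S \neq S'$. This yields an induced copy of $K_{1, 2^\kappa}$, a tree on $2^\kappa$ vertices.

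For the matching of $2^\kappa$ disjoint edges, I invoke an independent family $(A_\gamma)_{\gamma < 2^\kappa}$ of subsets of $E \setminus \{e_0\}$, by which I mean that for any two distinct indices $\gamma, \delta$ each of the four Boolean atoms $A_\gamma \cap A_\delta$, $A_\gamma \setminus A_\delta$, $A_\delta \setminus A_\gamma$, and $(E \setminus \{e_0\}) \setminus (A_\gamma \cup A_\delta)$ has cardinality $\kappa$. Such a family exists by the Fichtenholz--Kantorovich--Hausdorff theorem, after a standard \emph{thickening} (replacing each element of the ground set by a disjoint copy of $\kappa$) that inflates every atom to size $\kappa$. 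Set $X_\gamma := A_\gamma \cup \{e_0\}$ and $Y_\gamma := E \setminus A_\gamma$. A direct check gives $X_\gamma \cap Y_\gamma = \{e_0\}$, so $\{X_\gamma, Y_\gamma\}$ is an edge of $O(\kappa)^{\neg \bot}$. For $\gamma \neq \delta$, each of $X_\gamma \cap X_\delta$, $X_\gamma \cap Y_\delta$, $Y_\gamma \cap X_\delta$, $Y_\gamma \cap Y_\delta$ contains, together with at most $\{e_0\}$, one of the four atoms above, hence has cardinality $\kappa$; in particular it is not finite and odd, so no cross edge is present. Moreover the $X_\gamma, Y_\gamma$ are pairwise distinct: $X_\gamma = Y_\delta$ would force $A_\gamma \cap A_\delta = \emptyset$, contradicting independence. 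Therefore the $2^\kappa$ pairs $\{X_\gamma, Y_\gamma\}$ form an induced matching of size $2^\kappa$.

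The only delicate step is securing an independent family in $\mathcal{P}(E \setminus \{e_0\})$ of size $2^\kappa$ with all pairwise Boolean atoms of cardinality $\kappa$; this is classical but worth invoking carefully in the general uncountable case. Everything else reduces to a short parity and cardinality verification.
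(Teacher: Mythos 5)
Your proof is correct, but it follows a genuinely different route from the paper's. Both arguments share the same underlying trick: reserve a single marker element ($e_0$ for you, $r$ in the paper) so that the wanted edges come from singleton intersections, and arrange every other pairwise intersection to be infinite. The paper realizes the ground set as $E:=[\kappa]^{<\omega}\cup\{r\}$ and takes $A_\alpha:=[X_\alpha]^{<\omega}\cup\{r\}$, $B_\alpha:=E\setminus[X_\alpha]^{<\omega}$ for an antichain $(X_\alpha)_{\alpha<2^\kappa}$ of subsets of $\kappa$ all containing a fixed infinite $X$; the ideal structure of $[X_\alpha]^{<\omega}$ (up-directedness, cofinality of differences) does the work of showing the cross intersections are infinite, and the tree is then obtained by adjoining the extra vertex $R:=[X]^{<\omega}\cup\{r\}$ on top of the matching, giving a depth-two spider. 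You instead take a Hausdorff independent family $(A_\gamma)_{\gamma<2^\kappa}$ with all four pairwise Boolean atoms of size $\kappa$ (the thickening step you flag is exactly the point that needs care, and your treatment of it is adequate), and set $X_\gamma:=A_\gamma\cup\{e_0\}$, $Y_\gamma:=E\setminus A_\gamma$; your parity and cardinality checks, including the distinctness argument ruling out $X_\gamma=Y_\delta$, all go through. What each approach buys: the paper's antichain-plus-finite-subsets device needs only the existence of a large antichain in $\mathcal{P}(\kappa)$ and makes the infiniteness verifications structural; yours imports the stronger Fichtenholz--Kantorovich--Hausdorff theorem but in exchange the four non-edge checks become one-line atom identifications, and your tree --- an induced star $K_{1,2^\kappa}$ with center $\{e_0\}$ and leaves $D_S=\{e_0\}\cup F\cup S$ --- is self-contained and simpler than the paper's, requiring no classical input at all for that half of the statement.
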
 
\begin{proof}
  Let $G$ be the graph made of $2^{\kappa}$ disjoint edges
  $\{a_{\alpha}, b_{\alpha}\}$ with $\alpha\in 2^{\kappa}$. We show that $G$ is
  isomorphic to an induced subgraph of $O(E)^{\neg \bot}$, where $E$ is the set
  $ [\kappa]^{<\omega}$ of finite subsets of $\kappa$, augmented of an extra
  element $r$. Since $\vert E\vert = \kappa$, this proves our first
  statement. For the purpose of the proof, select $2^{\kappa}$ subsets
  $X_{\alpha}$ of $\kappa$ which are pairwise incomparable with respect to
  inclusion and contain an infinite subset $X$. For each $\alpha\in 2^{\kappa}$,
  let $A_{\alpha}:= [X_{\alpha}]^{<\omega}\cup \{r\}$ and
  $B_{\alpha}:= E\setminus [X_{\alpha}]^{<\omega}$.  We claim that the subgraph $H$
  of $O(E)^{\neg \bot}$ induced by 
  $\{A_{\alpha} : \alpha \in 2^{\kappa}\} \cup \{B_{\alpha}: \alpha \in 2^{\kappa}\}$ is a direct sum of
  the edges $\{ A_{\alpha} , B_{\alpha} \}$ for $\alpha\in 2^{\kappa}$. That
  $A_{\alpha}$ and $B_{\alpha}$ form an edge is obvious: their intersection is
  the one element set $\{r\}$.  Now, let $\alpha \not = \beta$. We claim that
  the three intersections $A_{\alpha}\cap A_{\beta}$,
  $A_{\alpha} \cap B_{\beta}$ and $B_{\alpha} \cap B_{\beta}$ are all
  infinite. For the first one, this is obvious (it contains
  $[X_{\alpha} \cap X_{\beta}]^{< \omega}$), for the next two, use the fact that
  the $A_{\alpha}$ are up-directed with respect to inclusion, hence the
  difference $A_{\alpha} \setminus A_{\beta}$ is cofinal in $A_{\alpha}$, thus
  must be infinite, and the union $A_{\alpha} \cup A_{\beta}$ cannot cover
  $[\kappa]^{<\omega}$, hence its complement is infinite.  It follows that the
  graph $H$ contains no other edges than the pairs
  $\{ A_{\alpha} , B_{\alpha} \}$'s. This proves that $H$ is isomorphic to $G$,
  and yields our first statement. For the second statement, add
  $R:= [X]^{<\omega} \cup \{r\}$ to the set of vertices of $H$. We get a
  tree. Indeed, for each $\alpha$, the vertices $R$ and $A_{\alpha}$ do not form
  an edge in $O(E)^{\neg \bot}$ (indeed, $R\cap A_{\alpha}= [X]^{<\omega}$ hence
  is infinite), while for each $\beta$, the vertices $R$ and $A_{\beta}$ form an
  edge (since $R\cap A_{\beta}= \{r\}$).
\end{proof}

For infinite graphs with finite Boolean dimension, a straightforward application
of Tychonoff's theorem yields the following result.

\begin{thm} Let $n\in \NN$. For every graph $G$, $\ddBool(G)\leq n$ if and only
  if $\ddBool (G_{\restriction X} )\leq n$ for every finite subset $X$ of
  $V(G)$.
\end{thm}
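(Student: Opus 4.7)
The plan is to treat this as a standard compactness argument, with Tychonoff supplying the nonemptiness of an intersection of closed sets with the finite intersection property.

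The easy direction is immediate: if $G$ is the Boolean sum of $n$ cliques $[C_1]^2,\ldots,[C_n]^2$ on $V(G)$, then for any finite $X\subseteq V(G)$ the graph $G_{\restriction X}$ is the Boolean sum of the $n$ cliques $[C_i\cap X]^2$ on $X$, so $\ddBool(G_{\restriction X})\leq n$.

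For the nontrivial direction, assume $\ddBool(G_{\restriction X})\leq n$ for every finite $X\subseteq V(G)$. I would work in the compact space $\Omega:=(\{0,1\}^{V(G)})^n$ with the product topology (identifying a tuple $(C_1,\ldots,C_n)$ of subsets of $V(G)$ with its tuple of characteristic functions). For each finite $X\subseteq V(G)$, define
\[
K_X:=\bigl\{(C_1,\ldots,C_n)\in\Omega:\forall\{x,y\}\in[X]^2,\ \{x,y\}\in E(G)\iff \lvert\{i:\{x,y\}\subseteq C_i\}\rvert\text{ is odd}\bigr\}.
\]
Each $K_X$ is closed in $\Omega$, being the intersection over pairs $\{x,y\}\in[X]^2$ of conditions that depend on only the $2n$ coordinates $(C_i)_{z\in\{x,y\}}$; each such condition picks out a clopen subset of $\Omega$, so $K_X$ is even clopen.

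The hypothesis ensures each $K_X$ is nonempty: pick $C_1',\ldots,C_n'\subseteq X$ witnessing $\ddBool(G_{\restriction X})\leq n$ and extend by inclusion to $V(G)$ (the added vertices play no role in $[X]^2$). The family $\{K_X:X\in[V(G)]^{<\omega}\}$ has the finite intersection property since $K_{X_1\cup\cdots\cup X_k}\subseteq K_{X_i}$ for each $i$. By Tychonoff's theorem $\Omega$ is compact, so there exists $(C_1,\ldots,C_n)\in\bigcap_X K_X$. For any edge or non-edge $\{x,y\}$ of $G$, taking $X=\{x,y\}$ shows that $\{x,y\}\in E(G)$ iff $\{x,y\}$ is contained in an odd number of the $C_i$'s, so $G$ is the Boolean sum of the $n$ cliques $[C_i]^2$, giving $\ddBool(G)\leq n$. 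There is no real obstacle beyond setting up the right topological space and checking that the local conditions defining $K_X$ are closed; since only finitely many cliques are involved, "finite and odd" reduces to "odd" and the argument is straightforward.
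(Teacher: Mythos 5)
Your proof is correct and follows essentially the same route as the paper: both encode candidate representations as points of a compact product space (your $(\{0,1\}^{V(G)})^n$ is just the paper's $\powerset(\{1,\dots,n\})^{V(G)}$ under the identification $f(v)=\{i: v\in C_i\}$), define closed nonempty sets of ``locally correct'' representations indexed by finite $X$, and invoke Tychonoff via the finite intersection property. The only difference is cosmetic, plus your explicit (and correct) treatment of the easy direction, which the paper leaves implicit.
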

\begin{proof} Suppose that the second condition holds. For every finite subset
  $X$ of $V(G)$ let $U_X$ be the set of maps $f$ from $V(G)$ into the powerset
  $K:= \powerset (\{1, \dots, n\})$ such that the restriction
  $f_{\restriction X}$ is a Boolean representation of $G_{\restriction X}$ in
  $\{1, \dots, n\}$. Each such set $U_X$ is nonempty and closed in the set
  $K^{V(G)}$ equipped with the product topology, the set $K$ being equipped with the discrete topology. Every finite intersection
  $U_{X_1}\cap \dots \cap U_{X_\ell}$ contains $U_{X_1\cup \dots\cup X_{\ell}}$
  hence is nonempty. The compactness of $K^{V(G)}$ ensures that the intersection
  of all of those sets is nonempty. Any map in this intersection is a Boolean
  representation of $G$.
\end{proof}

Examples of graphs with finite Boolean dimension are given at the end of the
next subsection.

\subsection{Geometric notions of dimensions of graphs} We introduce three
notions of dimensions: geometric, inner, and symplectic, all based on bilinear
forms. We prove that if the Boolean dimension of a graph is finite, then it
coincides with the inner dimension, and either these dimensions minus $1$
coincide with the geometric and the symplectic dimension, or they coincide with
the geometric dimension, the symplectic being possibly larger
(Theorem~\ref{thm:threedimension}). We note before all that in general, the
Boolean dimension is not based on a bilinear form. It uses the map
$\varphi: \powerset (E)\rightarrow 2:= \{0, 1\}$ defined by setting
$\varphi(X,Y):= 1$ if $\vert X\cap Y\vert $ is finite and odd and $0$
otherwise. But except when $E$ is finite, it is not bilinear on $\powerset (E)$
equipped with the symmetric difference.

Let $\mathbb {F}$ be a field, and let $U$ be a vector space over $\mathbb{F}$,
and let $\varphi$ be a bilinear form over $U$. We recall that this form is {\em
  symmetric} if $\varphi(x,y) = \varphi(y,x)$ for all $x,y\in U$. Two vectors
$x,y$ are \emph{orthogonal} if $\varphi(x,y)=0$. A vector $x\in U$ is \emph
{isotropic} if $\varphi(x,x)=0$.  The \emph{orthogonal} of a subset $X$ of $U$
is the subspace
$X^{\bot}:=\{ y\in U: \varphi (x, y)=0 \; \text {for all}\; x\in X\}$. We set
$x^{\bot}$ instead of $\{x\}^{\bot}$. We recall that $\varphi$ is
\emph{degenerate} if there is some $x\in U\setminus \{0\}$ such that
$\varphi(x,y)=0$ for all $y\in U$.  The form $\varphi$ is said to be
\emph{alternating} if each $x \in U$ is isotropic, in which case $(U,\varphi)$
is called a {\em symplectic space}. The form $\varphi$ is an \emph{inner form}
or a \emph{scalar product} if $U$ has an \emph{orthonormal basis} (made of
non-isotropic and pairwise othogonal vectors).

\begin{definition} Let $U$ be a vector space equipped with a
  symmetric bilinear form $\varphi$. Let $G$ be a graph. We say that a map
  $f \colon V(G) \to U$ is a {\em geometric representation} of $G$ in
  $(U,\varphi)$ if for all $u,v \in V(G), u \neq v$, we have $u \sim v$ if and
  only if $\varphi(f(u),f(v)) \neq 0$.  The {\em geometric dimension} of $G$,
  denoted by $\ddgeo(G)$, is the least cardinal $\kappa$ for which there exists
  a geometric representation of $G$ in a vector space $U$ of dimension $\kappa$
  equipped with a symmetric bilinear form $\varphi$. The {\em symplectic
    dimension} of $G$, denoted by $\ddsymp(G)$, is the least cardinal $\kappa $
  for which there exists a symplectic space $(U,\varphi)$ in which $G$ has a
  geometric representation. The {\em inner dimension} of $G$, denoted by
  $\ddinn(G)$, is the least cardinal $\kappa$ for which $G$ has a geometric
  representation in a vector space of dimension $\kappa$ equipped with a scalar
  product.
\end{definition}

The notions of geometric and symplectic dimension were considered by several
authors, for example, \citet{garzon1987, godsil-royle2001}. There is an extensive
literature about this subject (e.g. \citet{fallat-hogben2007,grout}),
and notably the role of the field.  But apparently, the Boolean dimension was
not considered.

Except in subsection~\ref{subsection:dimensionrank}, we consider these notions
only for the $2$-element field $\mathbb{F}_2$, identified with the set
$\{0,1\}$.  If $U$ has finite dimension, say $k$, we identify it with
$\mathbb{F}_2^k$, the set of all $k$-tuples over $\{0,1\}$; the basis
$(e_i)_{i:=1,\dots, k}$, where $e_i$ is the $0$-$1$-vector with a $1$ in the
$i$-th position and $0$ elsewhere, is orthonormal; the scalar product of two
vectors $x:= (x_1, \dots, x_k)$ and $y:= (y_1, \dots, y_k)$ of $\mathbb{F}_2^k$
is then $\langle x\mid y\rangle:= x_1y_1+ \dots +x_ky_k$.  We recall the following
dichotomy result.

\begin{thm}\label{thm:nondegenerate}
  A nondegenerate bilinear symmetric form $\varphi$ on a finite $k$-dimensional
  space $U$ over the two-element field $\mathbb{F}_2$ falls into two
  types. Either $\varphi$ is non-alternating and $(U, \varphi)$ is isomorphic to
  $(\mathbb F_2^k, |)$ with the scalar product, or $\varphi$ is alternating, $k$
  is even, and $(U, \varphi)$ is isomorphic to the symplectic space
  $H(k):=(\mathbf{1}^ {\neg \bot}, |_{\restriction \mathbf{1}^ {\neg \bot}})$,
  where $\mathbf{1}^ {\neg \bot}$ is the orthogonal of
  $\mathbf 1:= (1, \dots, 1)$ with respect to the scalar product $|$ on
  $\mathbb F_2^{k+1}$.
\end{thm}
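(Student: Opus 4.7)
The argument naturally splits according to whether $\varphi$ is alternating. The alternating case is the classical symplectic classification, whereas the non-alternating case needs a characteristic-$2$ twist.

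\emph{Non-alternating case.} I would show by induction on $k = \dim U$ that a nondegenerate non-alternating symmetric $\varphi$ admits an orthonormal basis, yielding $(U,\varphi) \cong (\mathbb{F}_2^k, |)$. The base $k=1$ is immediate. For the inductive step, pick $v \in U$ with $\varphi(v,v) = 1$. Nondegeneracy gives $U = \langle v\rangle \oplus v^{\perp}$ with $\varphi_{\restriction v^{\perp}}$ nondegenerate. If the restriction is non-alternating, invoke the induction hypothesis on $v^{\perp}$ and append $v$. The subtle point is the subcase where $v^{\perp}$ happens to be alternating; here I would use nondegeneracy on $v^{\perp}$ to produce a hyperbolic pair $w_1, w_2 \in v^{\perp}$ with $\varphi(w_1,w_1) = \varphi(w_2,w_2) = 0$ and $\varphi(w_1,w_2) = 1$, and then replace $v$ by $v' := v + w_1$. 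A direct bilinearity check yields $\varphi(v',v') = 1$, and shows that $v + w_2 \in (v')^{\perp}$ with $\varphi(v+w_2, v+w_2) = 1$; hence $(v')^{\perp}$ contains a non-isotropic vector, is non-alternating, and the first subcase applies to $v'$.

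\emph{Alternating case.} Here every vector is isotropic. I would run the standard symplectic basis construction (valid over any field): pick $w_1 \neq 0$, use nondegeneracy to obtain $w_2$ with $\varphi(w_1,w_2) = 1$, observe that $\{w_1,w_2\}$ spans a nondegenerate plane (its Gram matrix has determinant $1$), decompose $U = \langle w_1,w_2\rangle \oplus \langle w_1,w_2\rangle^{\perp}$, and iterate on the orthogonal complement. This forces $k = 2m$ and determines $(U,\varphi)$ up to isomorphism by its dimension alone. To finish, I would verify that $H(k)$ is itself a nondegenerate alternating space of dimension $k$: every $x \in \mathbf{1}^{\perp}$ has even Hamming weight, so $\langle x \mid x\rangle = 0$, giving alternation; nondegeneracy follows from $(\mathbf{1}^{\perp})^{\perp} = \langle \mathbf{1}\rangle$ inside $(\mathbb{F}_2^{k+1}, |)$ together with $\mathbf{1} \notin \mathbf{1}^{\perp}$, the latter holding because $k+1$ is odd. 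The uniqueness just established then gives $(U,\varphi) \cong H(k)$.

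The genuine work lies in the alternating-complement subcase of the non-alternating argument, since over $\mathbb{F}_2$ one cannot naively diagonalise and must modify $v$ by a hyperbolic vector; the symplectic direction is a textbook exercise, and the matching with $H(k)$ is routine once the restricted form is recognised as nondegenerate alternating of the right dimension.
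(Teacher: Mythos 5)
Your proof is correct, and its overall architecture coincides with the paper's: split on whether $\varphi$ is alternating, produce an orthonormal basis in the non-alternating case, and reduce to the uniqueness of nondegenerate alternating spaces of a given even dimension in the other. The difference is that the paper simply cites two results from Cohn's \emph{Algebra}, Vol.~3 (Proposition 7.1, p.~344, for the existence of an orthogonal basis when the form is non-alternating in characteristic $2$, and Lemma 5.1, p.~331, for the decomposition of a symplectic space into hyperbolic planes), whereas you supply self-contained proofs of both. Your treatment of the delicate subcase --- when $v^{\perp}$ turns out to be alternating, replacing $v$ by $v' := v + w_1$ for a hyperbolic pair $w_1, w_2$ and checking that $(v')^{\perp}$ contains the non-isotropic vector $v + w_2$ --- is exactly the characteristic-$2$ content that Cohn's proposition encapsulates, and your computations there are right. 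You also verify explicitly that $H(k)$ is a nondegenerate alternating space of dimension $k$ (even weight gives alternation; $(\mathbf{1}^{\perp})^{\perp} = \langle \mathbf{1}\rangle$ with $\mathbf{1} \notin \mathbf{1}^{\perp}$ because $k+1$ is odd gives nondegeneracy), a point the paper leaves implicit. What your route buys is independence from the reference at the cost of length; what the paper's route buys is brevity. No gaps.
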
 

For reader's convenience, we give a proof. The proof, suggested by
Christian Delhomm\'e, is based on two results exposed in Algebra, Vol. 3,  of
 \citet{cohn}.  Let $(U,\varphi)$ be as stated in the above
theorem. Case 1: $\varphi$ is not
symplectic, that is $\varphi (x,x)\neq 0$ for some vector $x$. We apply
Proposition 7.1 page 344 of \citet{cohn}, namely: \emph{If $U$ is a vector space
  of characteristic $2$ and $\varphi$ is a symmetric bilinear form which is not
  alternating, then $U$ has a orthogonal basis}. Since $\varphi$ is 
  nondegenerate and the field if $\mathbb F_2$, any orthogonal basis is orthonormal, hence $\varphi$ is a scalar
product. Case 2: $\varphi$ is symplectic. In this case, Lemma 5.1, p.331 of
\citet{cohn} asserts in particular  that: \emph{Every symplectic space,
  (that is a space equipped with a bilinear symmetric form which is 
  nondegenerate and alternating) on an arbitrary field is a sum of hyperbolic
  planes}. Thus $k$ is even and in our case $U$ is isomorphic to any symplectic
space with the same dimension, in particular to $H(k)$.


When dealing with these notions of dimension, we may always consider
nondegenerate forms, hence in the case of finite dimensional representation,
Theorem~\ref{thm:nondegenerate} applies. In fact Lemma~\ref{lem:onetoone} and
Theorem~\ref {thm:booleandim} extend.

Let $U$ be a vector space over $\mathbb F_2$ and $\varphi$ a symmetric bilinear
form defined on $U$ with values in $\mathbb F_2$. Let $O^{\neg \bot}_{\varphi}$
be the graph of the non-orthogonality relation on $U$, that is, the graph whose
edges are the pairs of distinct elements $x$ and $y$ such that $\varphi(x,y)=1$.
If $k$ is an integer, then we denote by $O^{\neg \bot}_{\mathbb F_2}(k)$ the
graph on $\mathbb F_{2}^k$ of the non-orthogonality relation associated with the
inner product $|$. Similarly, for $k$ even, let $O^{\neg \bot}_H(k)$ be the
graph on $H(k)$, the orthogonal of $\mathbf 1:= (1, \dots, 1)$ with respect to
the scalar product $|$ on $(\mathbb F_2)^{k+1}$, equipped with the symplectic
form induced by the scalar product.

\begin{lem}\label{lem:dimensionphi} If $\dim (U)$, the dimension of the vector
  space $U$, is at least $3$, then the graph $O^{\neg \bot}_{\varphi}$ has no
  duo if and only if $\varphi$ is nondegenerate. Hence,
  $\ddgeo(O^{\neg \bot}_{\varphi})=\dim(U)$ when $\varphi$ is nondegenerate.
 \end{lem}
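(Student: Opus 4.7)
The plan is to translate the duo condition into a linear condition on $\varphi$ and then exploit a simple counting argument. First I would note that, by bilinearity and symmetry, for distinct $a,a'\in U$ the pair $\{a,a'\}$ is a duo in $O^{\neg\bot}_{\varphi}$ iff $\varphi(a,b)=\varphi(a',b)$ for every $b\in U\setminus\{a,a'\}$; writing $c:=a+a'\neq 0$, this becomes $\varphi(c,b)=0$ for all $b\in U\setminus\{a,a'\}$.

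The easy direction follows at once: if $\varphi$ is degenerate, pick a nonzero $c$ in the radical and any $a\in U$; then $\{a,a+c\}$ is a duo. For the converse, the map $b\mapsto \varphi(c,b)$ is a linear form on $U$, so either it vanishes identically (giving degeneracy of $\varphi$) or its kernel is a hyperplane $H$ whose complement $U\setminus H$ is a coset of the same cardinality as $H$---namely $2^{\dim U-1}$ elements when $U$ is finite, and infinitely many when $U$ is infinite. The duo condition forces $U\setminus H\subseteq\{a,a'\}$, i.e.\ $|U\setminus H|\leq 2$, which contradicts $\dim U\geq 3$. Hence the linear form is zero and $\varphi$ is degenerate. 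I expect this counting step to be the only substantive obstacle; it also pins down the hypothesis $\dim U\geq 3$, since in dimension two even nondegenerate forms (for instance the symplectic form on the hyperbolic plane) produce duos.

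For the dimension equality, the identity map realises $O^{\neg\bot}_{\varphi}$ as a geometric representation of itself in $(U,\varphi)$, giving $\ddgeo(O^{\neg\bot}_{\varphi})\leq \dim U$. For the matching lower bound I would copy the argument of Lemma~\ref{lem:onetoone}: if $G$ has no duo then every geometric representation $f\colon V(G)\to U'$ must be injective, for otherwise two vertices sharing an image would trivially form a duo. Applied to $G=O^{\neg\bot}_{\varphi}$, which is duo-free by the first part, this yields $|U'|\geq |U|$, and over $\mathbb F_2$ this cardinal inequality forces $\dim U'\geq \dim U$ in both the finite and infinite regimes, completing the proof.
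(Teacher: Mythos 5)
Your proof is correct and follows the same skeleton as the paper's: both directions reduce to the observation that $\{a,a'\}$ is a duo exactly when $c:=a+a'$ is a nonzero vector with $\varphi(c,b)=0$ for all $b\notin\{a,a'\}$, and the dimension equality is obtained, as in the paper, from the injectivity of any representation of a duo-free graph combined with the identity representation. The one step you handle differently is the crux, namely upgrading ``$\varphi(c,\cdot)$ vanishes off $\{a,a'\}$'' to ``$c$ lies in the radical'': the paper argues pointwise, picking $z\notin\Span\{a,a'\}=\{0,a,a',c\}$ and deducing $\varphi(a,c)=\varphi(z,c)+\varphi(z+a,c)=0$, whereas you observe that the nonvanishing set of the linear form $\varphi(c,\cdot)$ is either empty or the complement of a hyperplane, hence of size $2^{\dim U-1}\geq 4$ (or infinite), too large to be contained in $\{a,a'\}$. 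Your counting argument is an equally valid and somewhat cleaner way of exploiting the hypothesis $\dim U\geq 3$, and your remark that the hyperbolic plane shows the hypothesis is needed is a worthwhile addition; the paper's pointwise computation, on the other hand, is field-agnostic in flavour, though both arguments as written live over $\mathbb F_2$.
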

 \begin{proof}
   Suppose that $\varphi$ is degenerate. Pick a nonzero element $a$ in the
   kernel of $\varphi$. Then, as it is easy to check, the $2$-element set
   $\{0, a\}$ is a module of $O^{\neg \bot}_{\varphi}$. Conversely, let
   $\{a, b\}$ be a duo of $O^{\neg \bot}_{\varphi}$. We claim that $c:=a+b$
   belongs to the kernel of $\varphi$, that is $\varphi (x, c)=0$ for every
   $x\in U$.  Indeed, if $x\not \in \{a,b\}$, then $\varphi(x, a)=\varphi(x,b)$,
   hence $\varphi (x,c)=0$ since $\{a,b\}$ is a module. If $x\in \{a,b\}$ (e.g.
   $x:=a$), then since $\dim(U)\geq 3$, we may pick some
   $z\not \in \Span\{a,b\}:= \{0, a,b, a+b\}$, hence $\varphi(z, c)=0$. Since
   $z+a\not \in \{a,b\}$, $\varphi(z+a, c)=0$. It follows that $\varphi(a,c)=0$,
   proving our claim.  According to Lemma~\ref{lem:onetoone}, every
   representation of $O^{\neg \bot}_{\varphi}$ is one to one; since the identity
   map is a representation, we have $\ddgeo( O^{\neg \bot}_{\varphi}) =\dim(U)$.
 \end{proof} 

 We give below an existential result. The proof of the second item is based on
 the $\Delta$-system lemma (see \citep{kunen,rinot} for an elementary
 proof) that we recall now.

 \begin{lem}\label {delatsystemlemma}
   Suppose that $\kappa$ is a regular uncountable cardinal, and
   $\mathcal A:=(A_{\alpha})_{\alpha\in \kappa}$ is a family of finite sets.
   Then there exist a subfamily $\mathcal B:=(A_{\alpha})_{\alpha\in K}$, where
   the cardinality of $K$ is $\kappa$, and a finite set $R$ such that
   $A_{\alpha} \cap A_{\beta}= R$ for all distinct $\alpha, \beta \in K$.
\end{lem}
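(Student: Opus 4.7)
The plan is to prove the $\Delta$-system lemma by the classical two-step argument: first homogenize the sizes of the $A_\alpha$, then induct on the common size.

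First I would invoke the regularity and uncountability of $\kappa$ to reduce to the case where all sets $A_\alpha$ share the same finite cardinality. Indeed, the map $\alpha \mapsto |A_\alpha|$ sends $\kappa$ into $\omega$; since $\kappa$ is regular and $\omega < \kappa$, some fiber $\{\alpha : |A_\alpha| = n\}$ has cardinality $\kappa$. Replacing $\mathcal{A}$ by this subfamily and reindexing by $\kappa$, we may assume $|A_\alpha| = n$ for all $\alpha$.

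I would then proceed by induction on $n$. The case $n = 0$ is trivial with $R = \emptyset$. For the inductive step, I would split into two cases based on how often individual elements appear. \textbf{Case A}: There exists $x$ contained in $\kappa$ many of the $A_\alpha$. Let $K_0 := \{\alpha : x \in A_\alpha\}$, which has size $\kappa$. Apply the induction hypothesis to the family $(A_\alpha \setminus \{x\})_{\alpha \in K_0}$ of $(n-1)$-element sets to obtain $K \subseteq K_0$ of cardinality $\kappa$ and a finite set $R'$ with $(A_\alpha \setminus \{x\}) \cap (A_\beta \setminus \{x\}) = R'$ for distinct $\alpha,\beta \in K$. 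Since $x$ belongs to every $A_\alpha$ with $\alpha \in K$, we get $A_\alpha \cap A_\beta = R' \cup \{x\}$, as required.

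\textbf{Case B}: Every element of $\bigcup_\alpha A_\alpha$ belongs to strictly fewer than $\kappa$ of the sets $A_\alpha$. Here I would construct a subfamily of $\kappa$ pairwise disjoint sets by transfinite recursion, which yields a $\Delta$-system with root $R = \emptyset$. At stage $\gamma < \kappa$, having chosen $\alpha_\beta$ for $\beta < \gamma$, let $S := \bigcup_{\beta < \gamma} A_{\alpha_\beta}$. Since each $A_{\alpha_\beta}$ is finite, $|S| \leq |\gamma| \cdot \omega < \kappa$ by regularity. The set of indices meeting $S$ is $\bigcup_{s \in S}\{\alpha : s \in A_\alpha\}$, a union of $|S| < \kappa$ sets each of cardinality $<\kappa$, so by regularity its cardinality is $<\kappa$. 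Hence there exists $\alpha_\gamma$ with $A_{\alpha_\gamma} \cap S = \emptyset$, completing the recursion.

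The main subtlety is the repeated use of regularity to control the cardinalities of unions in both cases; without regularity (or without uncountability, which rules out the annoying behavior of $\omega$ as a cofinal countable union of finite sets), neither the homogenization step nor the Case~B construction would go through.
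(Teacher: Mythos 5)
The paper does not prove this lemma: it is stated as a recalled fact, with the proof deferred to the references \cite{kunen} and \cite{rinot}. So there is no in-paper argument to compare against; your proposal has to stand on its own, and it does. The two-step structure --- first using regularity and uncountability to fix a common finite cardinality $n$, then inducting on $n$ with the dichotomy ``some element lies in $\kappa$ many sets'' versus ``every element lies in fewer than $\kappa$ of them'' --- is exactly the elementary proof the paper points to in Rinot's note, and every step checks out: the fiber argument for homogenization is a correct use of the paper's definition of regularity, Case~A correctly reassembles the root as $R'\cup\{x\}$ since $x$ lies in every set of the subfamily, and in Case~B the set of indices whose sets meet $S$ is a union of fewer than $\kappa$ sets each of size less than $\kappa$, hence of size less than $\kappa$ by regularity, so the recursion never stalls. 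Two points you leave implicit but which are harmless: in Case~B one needs the chosen indices to be pairwise distinct, which is automatic because for $n\geq 1$ each previously chosen $A_{\alpha_\beta}$ is a nonempty subset of $S$ and so its index is already excluded; and the bound $\abs{S}\leq\abs{\gamma}\cdot\omega<\kappa$ needs only that $\kappa$ is an uncountable cardinal, not regularity per se (regularity is what you genuinely need for the union of the fibers). The proof is correct.
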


\begin{thm}\label{thm-geometric}\begin{enumerate} 
  \item Every graph has a symplectic dimension, and hence, it has a geometric
    one. However:
  \item not every graph has an inner dimension, e.g., a graph with $\kappa$
    vertices, with $\kappa$ regular, and no clique and no independent set of
    $\kappa$ vertices, does not have an inner representation; on an other hand:
  \item every locally finite graph has an inner dimension. 
\end{enumerate}
\end{thm}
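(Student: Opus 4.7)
The plan is to handle the three parts separately, using different constructions for each.

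For \textbf{part (1)}, given a graph $G=(V,E)$, let $U$ be the $\mathbb{F}_2$-vector space with basis $(e_v)_{v\in V}$, and define a bilinear form $\varphi$ by $\varphi(e_u,e_u):=0$ and, for $u\neq v$, $\varphi(e_u,e_v):=1$ iff $\{u,v\}\in E$, extending bilinearly. The form is symmetric on the basis, hence symmetric on $U$. It is also alternating: for $x=\sum_i e_{u_i}$, the diagonal contributions to $\varphi(x,x)$ vanish, and each unordered off-diagonal pair contributes $\varphi(e_{u_i},e_{u_j})+\varphi(e_{u_j},e_{u_i})=0$ in $\mathbb{F}_2$. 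Thus $f\colon v\mapsto e_v$ is a representation of $G$ in the symplectic space $(U,\varphi)$, so $\ddsymp(G)\le |V(G)|$ and in particular $\ddgeo(G)\le |V(G)|$.

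For \textbf{part (3)}, I reuse Example~\ref{example:1}: set $E:=E(G)$ and $f(v):=E_G(v)$. Local finiteness says each $f(v)$ is a \emph{finite} subset of $E$, so it belongs to $\mathbb{F}_2^{(E)}$, the space of finitely supported $\mathbb{F}_2$-valued functions on $E$, whose canonical basis $(\delta_e)_{e\in E}$ is orthonormal for the standard scalar product. As noted in the example, $\langle f(u)\mid f(v)\rangle=|E_G(u)\cap E_G(v)|\bmod 2$ equals $1$ iff $\{u,v\}\in E(G)$, so $f$ is an inner representation and $\ddinn(G)\le |E(G)|$.

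For \textbf{part (2)}, suppose for contradiction that $G$ (with $\kappa$ vertices, $\kappa$ regular, containing no clique or independent set of size $\kappa$) has an inner representation $f\colon V(G)\to U$, and fix an orthonormal basis $(b_i)_{i\in I}$ of $U$. Every vector of $U$ is a \emph{finite} linear combination of basis elements, so $f(v)=\sum_{i\in A_v}b_i$ for a finite $A_v\subseteq I$, and $\{u,v\}\in E(G)$ iff $|A_u\cap A_v|$ is odd. Since $\kappa$ is regular and necessarily uncountable (the countable case being vacuous by infinite Ramsey), the $\Delta$-system lemma (Lemma~\ref{delatsystemlemma}) yields $K\subseteq V(G)$ of cardinality $\kappa$ and a common finite root $R$ with $A_u\cap A_v=R$ for all distinct $u,v\in K$. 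The parity of $|R|$ is then fixed, so $K$ is either a clique or an independent set of size $\kappa$, contradicting the hypothesis.

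The main obstacle is the observation driving part (2): an inner representation forces each $f(v)$ to be encoded by a \emph{finite} subset of an orthonormal basis (in contrast with Boolean representations, whose values may be infinite subsets), and this finiteness is precisely what unlocks the $\Delta$-system lemma to produce a monochromatic set of full size $\kappa$. Parts (1) and (3) are essentially direct constructions, the only mild subtlety being the use of characteristic $2$ in (1) to ensure that the naturally defined symmetric form is automatically alternating.
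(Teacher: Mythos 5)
Your proposal is correct and follows essentially the same route as the paper's proof: the symplectic form defined on a vertex-indexed basis over $\mathbb{F}_2$ for part (1), the reduction of an inner representation to finite subsets of an orthonormal basis followed by the $\Delta$-system lemma (with the Ramsey observation that $\kappa$ must be uncountable) for part (2), and the edge-incidence map $v\mapsto E_G(v)$ into the space of finite subsets of $E(G)$ for part (3). No substantive differences.
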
 

\begin{proof}
  \begin{enumerate}
  \item Let $G$ be a graph, and $\kappa:= \vert V(G)\vert$. Let $U$ be a vector
  space over $\mathbb{F}_2$ with dimension $\kappa$ (e.g.,
  $U:= \mathbb{F}_2^{[V(G)]}$, the set of maps
  $f: V(G) \rightarrow \mathbb{F}_2$ which are $0$ almost everywhere).  Define a
  symplectic form $\varphi$ on a basis $\mathcal B:= \{ b_v: v\in V(G)\}$ of $U$
  indexed by the elements of $V(G)$ (e.g., $b_v$ is the map from $V(G)$ to
  $\mathbb{F}_2$ defined by $b_v(v)=1$ and $b_v(u)=0$ for $u\not =v$). For that,
  set $\varphi (b_u, b_v):=1$ if $u \not =v$ and $u\sim v$; in particular
  $\varphi (b_v, b_v)=0$ for every $v\in V(G)$. Then extend $\varphi$ on $U$ by
  bilinearity. Since the vectors of the basis are isotropic and $\mathbb{F}_2$
  has characteristic two, $\varphi$ is symplectic. By construction, the map
  $v\rightarrow b_v$ is a representation of $G$ in $(U, \varphi)$. Hence $G$ has
  a symplectic dimension.

\item An inner representation of a graph $G$ reduces to a map $f$ from $V(G)$
  into the vector space $[E]^{<\omega}$ of finite subsets of a set $E$ equipped
  with the symmetric difference such that for every two-element subset
  $e:=\{u , v\}$ of $V(G)$, we have $e \in E(G)$ if and only if
  $\card{f(u) \cap f(v)}$ is odd. Suppose that $V(G)= \kappa$ and no subset of
  $V(G)$ of cardinality $\kappa$ is a clique or an independent set. According to
  Ramsey's theorem, $\kappa$ is uncountable. Apply Lemma~\ref{delatsystemlemma}
  to $\mathcal A:= (f(u))_{u\in V(G)}$. 
  Let
  $\mathcal B:=(f(u))_{u\in K}$ be a
subfamily of $\mathcal A$, where $K$ has cardinal $\kappa$, and let
 $R$ be given by this lemma. Since
  $f(u) \cap f(v)= R$ for all every $u, v \in K$, the set $K$ is a
  clique or an independent set depending on the fact that the cardinality of $R$
  is odd or even. Hence, if $G$ has no clique and no independent set of $\kappa$
  vertices, it cannot have an inner representation. A basic example on
  cardinality $\aleph_1$ is provided by the comparability graph $G$ of a
  Sierpinskization of a subchain $A$ of the reals of cardinality $\aleph_1$ with
  an order of type $\omega_1$ on $A$.

\item Let $E:= E(G)$. Let $[E]^{<\omega}$ be the collection of finite subsets of
  $E$; equipped with the symmetric difference $\Delta$, $[E]^{<\omega}$ is a
  vector space over $\mathbb{F}_2$; the one-element subsets of $E$ form a basis;
  the map $\varphi: [E]^{<\omega}\times [E]^{<\omega}\rightarrow \mathbb{F}_2$
  defined by setting $\varphi (X, Y)=1$ if $\vert X\cap Y\vert $ is odd and
  $\varphi (X, Y)=0$ otherwise is a bilinear form for which the one-element
  subsets of $E$ form an orthonormal basis. Hence $\varphi$ is an inner product.
  Let $f:V(G) \rightarrow \powerset (E)$ be defined by setting $f(x):= E_G(x)$
  ($=\{e\in E: x\in e\}$). Since for any pair of distinct vertices
  $x,y\in V(G)$, $\vert E_G(x) \cap E_G(y)\vert=1$ amounts to
  $\varphi(f(x), f(y))=1$, $f$ is an inner representation of $G$. \qedhere
  
\end{enumerate}
\end{proof} 

As noted by 
\citet{delhomme}, the Boolean dimension can be
strictly smaller than the geometric dimension. For an example, if $\kappa$ is an
infinite cardinal, the geometric dimension of $O(\kappa)^{\neg \bot}$ is
$2^\kappa$ while its Boolean dimension is at most $\kappa$. Indeed, from
Theorem~\ref{thm-geometric}, $O(\kappa)^{\neg \bot}$ has a geometric
representation in a vector space $U$.  As for any representation,
Lemma~\ref{lem:onetoone} is still valid; since $O(\kappa)^{\neg \bot}$ has no
duo (for $\kappa\geq 3$) the cardinality of $U$ is at least $2^{\kappa}$, thus
the dimension of the vector space $U$ is $2^{\kappa}$, while
$O(\kappa)^{\neg \bot}$ has a Boolean representation in a set of cardinality
$\kappa$.

 \begin{problem}
   Does every countable graph has an inner dimension? \footnote{Norbert Sauer informed us on january 2022 that the answer is positive}
 \end{problem}

\subsection{Graphs with finite geometric dimension}

\begin{thm}\label{thm:threedimension} If the Boolean dimension of a graph $G$ is
  finite, then it is equal to the inner dimension of $G$ and
either 
\begin{enumerate}
\item the geometric dimension, the symplectic dimension and the Boolean
  dimension of $G$ are equal,

  or \item the geometric dimension and the symplectic dimension of $G$ are equal
  to the Boolean dimension of $G$ minus $1$,

  or \item the geometric dimension and the Boolean dimension of $G$ are equal
  and are strictly less than the symplectic dimension of $G$, in which case the
  difference between these two numbers can be arbitrarily large.
\end{enumerate}
\end{thm}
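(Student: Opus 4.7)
The plan is to establish the theorem in four quick steps: (i) $\ddinn(G) = \ddBool(G)$; (ii) $\ddgeo(G) \in \{n-1, n\}$ where $n := \ddBool(G)$; (iii) the trichotomy according to the value of $\ddsymp(G)$; and (iv) a family of graphs realising arbitrarily large gap in~(3). For (i), I identify a Boolean representation $f : V(G) \to \powerset(\{1, \dots, n\})$ with its characteristic-vector version $\hat f : V(G) \to \mathbb{F}_2^n$; the parity of $\abs{f(x) \cap f(y)}$ is exactly the scalar product $\langle \hat f(x) \mid \hat f(y) \rangle$, so Boolean representations of size $n$ correspond bijectively to inner representations in $(\mathbb F_2^n, |)$, giving $\ddinn(G) = \ddBool(G)$.

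For (ii), fix an optimal geometric representation $f : V(G) \to (U, \varphi)$ with $\dim U = k := \ddgeo(G)$. I first reduce to $U = \Span f(V(G))$ with $\varphi$ nondegenerate: if $W := \Span f(V(G)) \subsetneq U$, then $f$ is still a representation in $(W, \varphi_{|W})$; and if $R$ is the radical of the form on $W$, then $\bar f(v) := f(v) + R$ is a representation in $(W/R, \bar\varphi)$ because $\bar\varphi(\bar f(u), \bar f(v)) = \varphi(f(u), f(v))$. Minimality of $k$ then forces $W = U$ and $R = 0$. Theorem~\ref{thm:nondegenerate} now leaves two cases. If $(U, \varphi) \cong (\mathbb F_2^k, |)$, then $f$ is already an inner representation, so $k \geq \ddinn(G) = n$. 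If $(U, \varphi) \cong H(k)$ with $k$ even, composing $f$ with the inclusion $H(k) = \mathbf 1^{\neg \bot} \hookrightarrow (\mathbb F_2^{k+1}, |)$ yields an inner representation in dimension $k+1$, so $k + 1 \geq n$. Either way, $\ddgeo(G) \in \{n-1, n\}$.

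For (iii), use $\ddsymp(G) \geq \ddgeo(G)$. If $\ddgeo(G) = n - 1$, the optimal representation cannot be inner (that would give $\ddinn(G) \leq n - 1$), so it is symplectic and witnesses $\ddsymp(G) \leq n - 1$; combined, $\ddsymp(G) = n - 1$, giving case~(2). If $\ddgeo(G) = n$, then either $\ddsymp(G) = n$ (case~(1)) or $\ddsymp(G) > n$ (case~(3)).

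The remaining assertion, and the one I expect to be the main obstacle, is that the gap in case~(3) can be made arbitrarily large. For this I would use the complete graphs $K_m$ with $m$ even. Trivially $\ddBool(K_m) = \ddgeo(K_m) = 1$, realised by the constant map into $(\mathbb F_2, |)$. For any symplectic representation $f : V(K_m) \to (U, \varphi)$ the Gram matrix $M := \bigl(\varphi(f(v_i), f(v_j))\bigr)_{1 \leq i, j \leq m}$ equals $J - I$ of order $m$, with determinant $(-1)^{m-1}(m - 1)$, which is odd precisely when $m$ is even; hence $\rank(M) = m$ over $\mathbb F_2$. Since the rank of a Gram matrix is at most the dimension of the span of the vectors involved, $m \leq \dim U$, so $\ddsymp(K_m) \geq m$, and thus $\ddsymp(K_m) - \ddBool(K_m) \geq m - 1$ can be made arbitrarily large.
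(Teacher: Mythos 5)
Your proof is correct, and the core of it — reduce an optimal geometric representation to a nondegenerate form on the span, invoke Theorem~\ref{thm:nondegenerate} to split into the scalar-product case (forcing $\ddgeo(G)=\ddinn(G)=\ddBool(G)$) and the symplectic case (forcing $\ddgeo(G)=\ddsymp(G)=\ddBool(G)-1$ via the inclusion $H(k)\hookrightarrow(\mathbb F_2^{k+1},|)$) — is exactly the paper's argument, just written out with the degenerate/span reduction made explicit where the paper leaves it implicit. The one place you genuinely diverge is the lower bound $\ddsymp(K_m)\geq m$ for $m$ even. The paper also uses cliques as the witnessing family, but bounds the symplectic dimension from below combinatorially: it composes a symplectic representation with the translation $x\mapsto x+\mathbf 1$ and invokes the fact (Claim~\ref{claim:problem}, from van Lint--Wilson) that among $\ell+1$ odd-sized subsets of an $\ell$-set, two must meet in an odd number of elements, so a clique in a graph of symplectic dimension $2k$ has at most $2k+1$ vertices. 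You instead compute that the Gram matrix of any symplectic representation of $K_m$ is $J-I$, whose determinant $(-1)^{m-1}(m-1)$ is odd for $m$ even, so its $\mathbb F_2$-rank is $m$ and hence $\dim U\geq m$. Your route is shorter and sits naturally alongside the paper's Theorem~\ref{thmrank-dimension} (symplectic dimension equals the $\mathbb F_2$-rank of the adjacency matrix); the paper's route yields the slightly sharper exact value $\ddsymp(K_{2k})=\ddsymp(K_{2k+1})=2k$ and handles the odd case too, but either suffices for the ``arbitrarily large difference'' assertion.
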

\begin{proof} The first assertion is obvious.  By definition,
  $\ddgeo(G)\leq \min\{ \ddBool(G), \ddsymp(G)\}$. Apply Theorem~\ref
  {thm:nondegenerate}. Let $k:=\ddgeo(G)$. If $k\not =\ddBool (G)$, then $G$ is
  representable into $H(k)$ and thus in $\mathbb F_2^{k+1}$, hence $(2)$ holds.
  If $k=\ddBool (G)$, then $\ddsymp(G)\geq k$. The examples given in $(a)$ below
  show that the difference $\ddsymp (G)-\ddBool (G)$ can be large.
\end{proof}

We give some examples when the graphs are finite. 
\begin{exs}
\begin{enumerate}[{(a)}]
%

\item $\ddgeo(K^V_X)= \ddBool(K^V_X)=1$ if $\vert X\vert \geq 2$ (in fact they equal
  $0$ if $\vert X\vert \leq 1$) and $\ddsymp(K^V_X)= 2k$ if
  $\vert X\vert \in \{2k, 2k+1\}$.
 
\item
  $\ddgeo(O^{\neg \bot}_{\mathbb F_2}(k))=\ddBool(O^{\neg \bot}_{\mathbb
    F_2}(k))=k$ for $k\geq 2$, and $0$ otherwise.

\item
  $\ddgeo (O^{\neg \bot}_H(k))=\ddsymp(O^{\neg \bot}_H(k))=\ddBool(O^{\neg
    \bot}_H(k))-1=k$ for $k=2m \geq 4$, 
    and \\
  $\ddgeo(O^{\neg \bot}_H(2))=\ddBool(O^{\neg \bot}_H(2))=\ddsymp(O^{\neg
    \bot}_H(2))-1=1$.
\end{enumerate}
\end{exs}
 
These examples are extracted from \cite{bbbp2012}. The paper being unpublished,
we give a hint below. We use the following lemma.

\begin{lem}\label{lem:clique}
  If $G \coloneqq (V,E)$ is a graph for which $\ddsymp (G)=2k\in \NN$, then every
  clique of $G$ has at most $2k+1$ elements.  \end{lem}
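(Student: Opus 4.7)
The plan is to take a symplectic representation $f \colon V(G) \to U$ in a symplectic (i.e.\ alternating) space $(U,\varphi)$ of dimension $2k$ over $\FF_2$, whose existence is guaranteed by $\ddsymp(G)=2k$, fix a clique on $n$ vertices $v_1,\dots,v_n$, and analyze linear dependencies among the image vectors $x_i:=f(v_i)$ inside $U$. The aim is to show $n\le 2k+1$; the bound will come from a simple counting argument over $\FF_2$, so I do not expect a real obstacle beyond a clean case distinction on the parity of~$n$.

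First I would record the Gram data. Since $\varphi$ is alternating, $\varphi(x_i,x_i)=0$ for all $i$, and since $v_i\sim v_j$ for $i\ne j$, $\varphi(x_i,x_j)=1$. In particular the $x_i$ are pairwise distinct: if $x_i=x_j$ then $\varphi(x_i,x_j)=\varphi(x_i,x_i)=0$, contradicting adjacency.

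The core step is to determine which linear relations $\sum_{i\in S}x_i=0$, with $\emptyset\ne S\subseteq\{1,\dots,n\}$, are possible. Applying $\varphi(\,\cdot\,,x_j)$ to such a relation yields $\sum_{i\in S}\varphi(x_i,x_j)=0$ in $\FF_2$. If $j\notin S$, every term equals $1$, so $|S|\equiv 0\pmod 2$; if $j\in S$, the term $i=j$ is $0$ and the remaining $|S|-1$ terms equal $1$, so $|S|-1\equiv 0\pmod 2$. Hence if $S$ is a \emph{proper} nonempty subset of $\{1,\dots,n\}$, both parity conditions apply simultaneously and we reach a contradiction. The only possible nontrivial relation is therefore $\sum_{i=1}^n x_i=0$, and it occurs only when $n$ is odd.

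To conclude: if $n$ is even then $\{x_1,\dots,x_n\}$ is linearly independent, so $n\le\dim U=2k$; if $n$ is odd then $\{x_1,\dots,x_{n-1}\}$ is linearly independent (since any nonempty dependence among them would be a proper subrelation, ruled out above), giving $n-1\le 2k$. In both cases $n\le 2k+1$, proving the lemma.
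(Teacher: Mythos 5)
Your proof is correct, but it takes a different route from the paper's. The paper identifies the $2k$-dimensional symplectic space with the concrete model $H(2k)\subset \FF_2^{2k+1}$ (via Theorem~\ref{thm:nondegenerate}), translates the image of the clique by $\mathbf 1$ so that its vectors become odd-size subsets of a $(2k+1)$-element set with pairwise even intersections, and then invokes the ``odd-town'' statement of Claim~\ref{claim:problem} (quoted from van Lint--Wilson) to cap the number of such subsets at $2k+1$. You instead work directly with the Gram matrix of the clique in an arbitrary alternating space: the relation $\sum_{i\in S}x_i=0$ paired against $x_j$ forces $\card{S}$ to be even when $j\notin S$ and odd when $j\in S$, so no proper nonempty $S$ can carry a dependence, and hence at least $n-1$ of the $n$ image vectors are linearly independent, giving $n-1\le 2k$. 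Your argument is more self-contained: it needs neither the classification of nondegenerate forms over $\FF_2$ nor the combinatorial claim, and it applies verbatim to a possibly degenerate alternating form, whereas the paper's version tacitly passes to a nondegenerate quotient before using the model $H(2k)$. What the paper's route buys is the explicit link to a classical set-system result and the reuse of the $H(k)$ machinery already set up for the examples; in substance the two proofs are dual to one another, since Claim~\ref{claim:problem} is itself standardly proved by exactly the rank computation you carry out.
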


This fact is a straightforward consequence of the following claim which appears
equivalently formulated in \cite{lint} as Problem 19O.(i), page 238.
  
\begin{claim} \label{claim:problem} If $\ell +1$ subsets $A_i$, $i <\ell+1$, of
  an $\ell$-element set $A$ have odd size, then there are $i, j < \ell + 1$,
  $i \not = j$ such that $A_i \cap A_j$ has odd size.
\end{claim}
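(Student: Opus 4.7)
The plan is to translate the combinatorial claim into linear algebra over $\mathbb{F}_2$ and obtain a contradiction via a dimension count.

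To each subset $A_i \subseteq A$ I would associate its characteristic vector $v_i \in \mathbb{F}_2^{\ell}$ (identifying $A$ with $\{1,\dots,\ell\}$). Under this correspondence, the cardinality of $A_i \cap A_j$ modulo $2$ equals the standard scalar product $\langle v_i \mid v_j\rangle$ in $\mathbb{F}_2^{\ell}$, and in particular $|A_i|$ is odd if and only if $\langle v_i \mid v_i\rangle = 1$.

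I argue by contradiction: assume the $\ell+1$ subsets $A_0, \dots, A_{\ell}$ all have odd size but every pairwise intersection $A_i \cap A_j$ (with $i\neq j$) has even size. In the vector-space language this reads $\langle v_i\mid v_i\rangle = 1$ for every $i$ and $\langle v_i \mid v_j\rangle = 0$ for $i\neq j$. The main step is to show that $v_0,\dots, v_{\ell}$ are then linearly independent: if $\sum_{i=0}^{\ell} c_i v_i = 0$ with $c_i\in\mathbb{F}_2$, taking the scalar product of both sides with $v_j$ yields $c_j \langle v_j\mid v_j\rangle = c_j = 0$ for each $j$, by the assumed orthogonality relations. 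This exhibits $\ell+1$ linearly independent vectors in the $\ell$-dimensional space $\mathbb{F}_2^{\ell}$, which is the desired contradiction.

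The argument is essentially a one-line application of orthogonality, so there is no real obstacle; the only thing to be careful about is the bookkeeping between the combinatorial ``odd cardinality'' statements and the bilinear-form statements over $\mathbb{F}_2$, and the fact that the self-inner products $\langle v_i\mid v_i\rangle$ are also forced to be $1$, which is precisely what lets the linear-independence argument go through.
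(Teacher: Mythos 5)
Your proof is correct: the paper itself gives no proof of this claim, merely citing it as Problem 19O.(i) in van Lint and Wilson, and your argument is the standard ``oddtown'' proof that the cited source has in mind. The reduction to characteristic vectors in $\mathbb{F}_2^{\ell}$, the observation that the hypotheses force $\langle v_i\mid v_i\rangle=1$ and $\langle v_i\mid v_j\rangle=0$ for $i\neq j$, and the pairing argument showing the $\ell+1$ vectors are linearly independent in an $\ell$-dimensional space are all sound and complete.
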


We prove now that the examples satisfy the stated conditions. 

Item (a). The first part is obvious. For the second part, we use
Claim~\ref{claim:problem} and Lemma~\ref{lem:clique}. Indeed, let
$f: V(G)\rightarrow H(2k)$. Composing with the involution $h$ of
$\mathbb F_2^{2k+1}$ we get a representation in $\mathbf 1+H(2k)$, where the
involution $h$ is defined by $h(x) = x+\mathbf{1}$, where
$\mathbf{1}:= (1,1,\ldots,1) \in F_2^{2k+1}$. The image of a clique of $G$
yields subsets of odd size such that the intersection of distinct subsets has
even size. Thus from Claim~\ref {claim:problem} above there are no more than
$2k+1$ such sets.

With that in hand, we prove the desired equality $\ddsymp (K_{X}^{V(G)})= 2k$ if
$\vert X\vert\in \{ 2k, 2k+1\}$.

Indeed, let $X$ be an $n$-element subset of $V(G)$ and let $(x{_i})_{i<n}$ be an
enumeration of $X$. Let $k$ with $n\leq 2k+1$ and
$f: V(G)\rightarrow \mathbb F_2^{2k+1}$ be defined by $f(x)=0$ if
$x\in V(G)\setminus X$ and $f(x):= (b_j)_{j<2k+1}$, where $b_j=1$ for all
$j\not =i$ and $b_i=0$ if $x=x_i$.  Clearly, $f$ is a representation of $G$ in
$H(2k)$, thus $\ddsymp(K_{X}^{V(G)})\leq 2k$.  The reverse inequality follows
from Lemma~\ref{lem:clique}.

Item (b).  If $k=1$, the graph $O^{\neg \bot}_{\mathbb F_2}(k)$ is made of two
isolated vertices, and if $k=2$ the graph is a path on three vertices plus an
isolated vertex, their respective Boolean dimensions are $1$ and $2$, as
claimed. If $k\geq 3$ the result follows from the conclusion of
Lemma~\ref{lem:dimensionphi}.


Item (c) If $k=2$, the graph $O^{\neg \bot}_H(k)$ is made of a clique on three
vertices plus an isolated vertex, hence its Boolean dimension is $1$. If
$k\geq 4$, the equality $\ddgeo(O^{\neg \bot}_H(k))=\ddsymp(O^{\neg \bot}_H(k))$
follows from the conclusion of Lemma~\ref{lem:dimensionphi}.  The number of
edges of $O^{\neg \bot}_H(k)$ and $O^{\neg \bot}_{\mathbb F_2}(k)$ are
different, hence $O^{\neg \bot}_H(k)$ cannot have a Boolean representation in
$(\mathbb F_2^k, |)$. Since it has a representation in $(\mathbb F_2^{k+1}, |)$,
the result follows. \hfill $\Box$

The paper by 
\citet{godsil-royle2001} contains many more
results on the symplectic dimension over $\mathbb F_2$ of finite graphs.

\subsection{Dimension and rank}\label{subsection:dimensionrank}
We compute the symplectic dimension and the geometric dimension of a graph $G$
in terms of its adjacency matrix.

Let $n\in \NN$. Let $A$ be an $n\times n$ symmetric matrix with coefficients in
a field $\mathbb F$. We denote by $\rank_{\mathbb F} (A)$ the rank of $A$
computed over the field $\mathbb F$. The \emph{minrank} of $A$, denoted by
$\minrank_{\mathbb F} (A)$, is the minimum of $\rank_{\mathbb F} (A+D)$, where
$D$ is any diagonal symmetric matrix with coefficients in $\mathbb F$. If
$\mathbb F= \mathbb F_2$, we denote these quantities by $\rank_{2} (A)$ and
$\minrank_{2} (A)$. Let $G:= (V, E)$ be a graph on $n$ vertices. Let
$v_1, \dots, v_n$ be an enumeration of $V$. The \emph{adjacency matrix} of $G$
is the $n\times n$ matrix $A(G):= (a_{i,j}) _{1\leq i, j \leq n}$ such that
$a_{i,j}=1$ if $v_i\sim v_j$ and $a_{i,j}=0$ otherwise.

\begin{thm}\label{thmrank-dimension} If $G$ is a graph on $n$ vertices, then the
  symplectic and the geometric dimensions of $G$ over a field $\mathbb F$ are
  respectively equal to the rank and the minrank of $A(G)$ over $\mathbb F$.
\end{thm}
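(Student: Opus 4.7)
The plan is to establish both equalities by converting representations into matrix factorizations through the Gram matrix construction, treating the symplectic and geometric cases in parallel; I will phrase the argument over $\mathbb F_2$, the extension to general $\mathbb F$ being analogous.

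First I would handle the easy direction. Given a symplectic (resp.\ geometric) representation $f\colon V(G)\to (U,\varphi)$ of dimension $k$, fix a basis of $U$, let $F$ be the $n\times k$ matrix whose $i$-th row is $f(v_i)$ in that basis, and let $B$ be the matrix of $\varphi$ in that basis. The product $FBF^T$ is the $n\times n$ matrix with $(i,j)$-entry $\varphi(f(v_i),f(v_j))$; over $\mathbb F_2$ its off-diagonal entries match $A(G)$, while its diagonal is zero in the symplectic case (because $\varphi$ is alternating) and arbitrary in the geometric case. Consequently $A(G)=FBF^T$ in the symplectic case, and $A(G)+D=FBF^T$ for some diagonal $D$ in the geometric case, whence $\rank_2 A(G)\le k$ and $\minrank_2 A(G)\le k$ respectively.

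For the converse I would invoke normal forms over $\mathbb F_2$. In the symplectic case, set $r:=\rank_2 A(G)$; since $A(G)$ is alternating over $\mathbb F_2$, the rank $r$ is even and there is an invertible $Q$ with $Q^T A(G) Q = J_r\oplus 0_{n-r}$, where $J_r$ is the standard $r\times r$ symplectic block. Letting $F$ be the first $r$ rows of $Q^{-1}$ gives $A(G)=F^T J_r F$; defining $f(v_i)$ to be the $i$-th column of $F$ inside $(\mathbb F_2^r,J_r)$ then realizes $\ddsymp(G)\le r$. In the geometric case, choose $D$ achieving $M:=A(G)+D$ with $\rank_2 M=\minrank_2 A(G)=:r$. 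The symmetric form associated with $M$ descends to a nondegenerate symmetric form on the quotient $\mathbb F_2^n/\ker M\cong \mathbb F_2^r$; taking its matrix $S$ in some basis and assembling the quotient map into a matrix $F$ of size $r\times n$ yields $M=F^T S F$, and the columns of $F$ inside $(\mathbb F_2^r,S)$ realize a geometric representation, giving $\ddgeo(G)\le r$.

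The principal technical input is the factorization step: every alternating matrix of rank $r$ over $\mathbb F_2$ is congruent to $J_r\oplus 0$, and every symmetric matrix of rank $r$ factors as $F^T S F$ through the quotient by its kernel. Both are classical, so I do not expect a real obstacle; the proof is essentially a two-way dictionary between representations of dimension $k$ and symmetric bilinear forms expressed in coordinates, with the normal forms guaranteeing that the optimal dimension is reached.
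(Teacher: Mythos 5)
Your proof is correct and follows the same overall strategy as the paper: translate representations into Gram matrices, so that the symplectic (resp.\ geometric) dimension becomes the least dimension in which the symmetric matrix $A(G)$ (resp.\ some $A(G)+D$) is representable, and then show that this least dimension is the rank. The only real divergence is in the achievability step. The paper realizes the rank by quoting the fact that an $n\times n$ symmetric matrix of rank $r$ has a principal $r\times r$ submatrix of full rank (Godsil--Royle), and separately proves a three-part lemma (rank $\leq$ dimension for any representation; equality for a nondegenerate spanning representation; reduction to the nondegenerate case by quotienting). You instead use the congruence normal form $Q^{T}A(G)Q=J_r\oplus 0$ for the alternating case and the quotient by the radical for the geometric case; both are classical and both yield the factorization $F^{T}SF$ with $F$ of $r$ rows, so your argument is complete and, if anything, slightly more self-contained on the symplectic side. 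One caveat you share with the paper: the claim that the extension to a general field $\mathbb F$ is ``analogous'' is not quite automatic, since over $\mathbb F\neq\mathbb F_2$ the Gram matrix of a representation is only required to have \emph{nonzero} entries at the edges, so it need not equal $A(G)$ plus a diagonal; the clean two-way dictionary, and hence the theorem in the precise form stated (with $\minrank$ defined by varying only the diagonal of the $0$--$1$ matrix $A(G)$), really lives over $\mathbb F_2$, which is the case used everywhere else in the paper.
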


An $n\times n$ symmetric matrix $B:= (b_{i,j}) _{1\leq i, j\leq n}$ over a field
$\mathbb F$ is \emph{representable} as the matrix of a symmetric bilinear form
$\varphi$ on a vector space $U$ over a field $\mathbb F$ if there exists $n$
vectors $u_1, \dots, u_n$ in $U$, not necessarily distinct, such that
$b_{i,j}= \varphi(u_i, u_j)$ for all ${1\leq i, j \leq n}$.

The matrix $B$ can be represented in $U:=\mathbb F^n$, where
$(u_i)_{1\leq i\leq n}$ is the canonical basis and $\varphi (u_i,u_j)=
b_{i,j}$. According to the following lemma (see Corollary 8.9.2 p. 179 of
\cite{godsil-royle2001bis}), there is a representation in a vector space whose
dimension is the rank of the matrix $B$.
 
\begin{lem} An $n\times n$ symmetric matrix $B$ of rank $r$ has a principal
  $r\times r$ submatrix of full rank.
\end{lem}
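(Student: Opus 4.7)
The plan is to prove this by induction on $n$, the size of the symmetric matrix $B$. The base case is $n = r$, where $B$ itself is the desired principal submatrix (it has full rank by hypothesis). For the inductive step, I will assume $n > r$ and reduce to a symmetric $(n-1) \times (n-1)$ principal submatrix of the same rank $r$, after which the induction hypothesis finishes the argument.

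The key observation is that when $n > r$, the $n$ rows of $B$ are linearly dependent, so there exists an index $k$ such that row $k$ lies in the linear span of the remaining rows. By symmetry of $B$, the corresponding linear relation among rows transposes to a linear relation showing that column $k$ lies in the span of the remaining columns (with the same coefficients). I will then delete row $k$ and column $k$ to obtain an $(n-1) \times (n-1)$ principal submatrix $B'$, which is again symmetric. The rank is preserved under each deletion: removing row $k$ produces an $(n-1) \times n$ matrix of rank $r$, and then the same scalars that expressed column $k$ as a combination of the others still express the truncated column $k$ in terms of the truncated remaining columns, so removing column $k$ again preserves the rank. Hence $B'$ is an $(n-1) \times (n-1)$ symmetric matrix of rank $r$, and the induction hypothesis yields a principal $r \times r$ submatrix of $B'$ of full rank, which is in turn a principal submatrix of $B$.

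The only step that requires care is the verification that removing row and column $k$ simultaneously preserves the rank; the main subtlety is that after the row deletion, one must confirm that the column dependency is inherited by the reduced matrix rather than merely by $B$. This is immediate once one writes out the relation $B_{:,k} = \sum_{j \neq k} c_j B_{:,j}$ component-wise and restricts all components to indices other than $k$, but it is the one place where the symmetry of $B$ is genuinely used. Everything else is routine linear algebra.
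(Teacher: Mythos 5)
Your proof is correct. The paper does not actually prove this lemma; it only cites Corollary 8.9.2 of Godsil and Royle's \emph{Algebraic Graph Theory}, so your argument supplies a self-contained proof where the paper relies on a reference. The induction is the standard one and every step checks out: when $n>r$ a row dependency $R_k=\sum_{j\neq k}c_jR_j$ exists, symmetry transfers the same coefficients to the columns, deleting row $k$ preserves the row space, and the restricted relation $b_{ik}=\sum_{j\neq k}c_j b_{ij}$ for $i\neq k$ shows the subsequent column deletion preserves the column space, so the principal $(n-1)\times(n-1)$ submatrix still has rank $r$ and remains symmetric. You correctly flag the one point where symmetry is genuinely needed (inheriting the column dependency after the row deletion); without symmetry the statement is false, e.g.\ for a nilpotent Jordan block, so it is good that your argument visibly uses it.
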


The following result shows that this value is optimum.

\begin{lem}\label{rank=dim} The smallest dimension of a vector space in which a
  symmetric matrix $B$ is representable is the rank of $B$.  \end{lem}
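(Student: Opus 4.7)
Let $r = \operatorname{rank}(B)$. The plan is to prove the two inequalities $r \le d_{\min} \le r$, where $d_{\min}$ denotes the smallest dimension of a vector space in which $B$ is representable.

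\textbf{Upper bound} ($d_{\min}\le r$): I invoke the preceding lemma to obtain a principal $r\times r$ submatrix $B'$ of $B$ of full rank, indexed by rows and columns $i_1<\dots<i_r$. Because $B$ has rank $r$ and these $r$ rows are linearly independent, the remaining rows are their linear combinations; let $C$ be the $r\times n$ matrix consisting of the rows of $B$ indexed by $i_1,\dots,i_r$. Combining this row-dependence with the symmetry of $B$ yields the Schur-complement factorisation $B = C^{T}\, (B')^{-1}\, C$. I then take $U:=\mathbb F^{r}$, equip it with the symmetric bilinear form $\varphi$ whose matrix in the canonical basis is $(B')^{-1}$ (well-defined because $B'$ is symmetric and invertible), and let $u_j$ be the $j$-th column of $C$. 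The factorisation immediately gives $b_{i,j}=u_i^{T}(B')^{-1}u_j=\varphi(u_i,u_j)$ for all $i,j$, so $B$ is representable in the $r$-dimensional space $U$.

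\textbf{Lower bound} ($d_{\min}\ge r$): Suppose $B$ is representable in a vector space $U$ of dimension $d$ by vectors $u_1,\dots,u_n$ and a symmetric bilinear form $\varphi$. I fix any basis of $U$, denote by $M$ the $d\times d$ matrix of $\varphi$ in that basis, and by $P$ the $d\times n$ matrix whose $j$-th column is the coordinate vector of $u_j$. The defining identity $b_{i,j}=\varphi(u_i,u_j)$ translates into the matrix equation $B=P^{T}MP$, whence
\[
\operatorname{rank}(B)\ =\ \operatorname{rank}(P^{T}MP)\ \le\ \operatorname{rank}(P)\ \le\ d,
\]
since $P$ has only $d$ rows. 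This yields $r\le d$, and taking the infimum over all representations gives $r\le d_{\min}$.

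\textbf{Main obstacle.} The non-routine point is the Schur-complement identity $B=C^{T}(B')^{-1}C$ that underlies the upper bound; it is what makes the construction give exactly $r$ rather than more. It is obtained by writing $B$ in block form with $B'$ in the top-left corner: since the rows of $B$ lie in the row-span of the top $r$ rows, the off-diagonal block is forced, and symmetry together with the rank condition forces the bottom-right block to equal the corresponding product of the other three blocks. The lower-bound direction is a one-line consequence of standard rank inequalities.
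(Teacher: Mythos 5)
Your proof is correct. The lower bound is essentially the paper's argument in matrix clothing: the paper picks $r$ linearly independent columns of $B$ and shows the corresponding vectors $u_{j_1},\dots,u_{j_r}$ must be linearly independent in $U$, which is exactly the content of your inequality $\rank(B)=\rank(P^{T}MP)\le\rank(P)\le d$. Where you genuinely diverge is the upper bound. You produce the dimension-$r$ representation explicitly, via the Schur-complement factorisation $B=C^{T}(B')^{-1}C$ resting on the principal-submatrix lemma; the paper cites that lemma in the surrounding discussion but never writes down such a construction, and its own proof of the statement reaches the upper bound abstractly instead: start from the tautological representation in $\mathbb F^{n}$ with $\varphi(u_i,u_j)=b_{i,j}$, restrict to the span of the representing vectors, pass to the quotient by the radical of the form (their step $(3)$), and then show (their step $(2)$) that a spanning representation by a nondegenerate form forces $\dim U\le r$. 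Your route buys an explicit optimal representation (vectors and form, namely $(B')^{-1}$, written down concretely), at the price of needing the invertible principal block and the Schur-complement identity; the paper's route avoids both but is non-constructive and leans on the unproved ``simple exercise'' of step $(3)$. The only point worth adding to yours is the degenerate case $r=0$, where there is no principal submatrix and $B=0$ is represented in the zero space; everything else, including the validity of the principal-submatrix lemma over an arbitrary field (characteristic $2$ included), checks out.
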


\begin{proof}
  It is an immediate consequence of the following facts, whose proofs are a
  simple exercise in linear algebra.

  $1)$ Let $r:= \rank (B)$.  Then $r\leq \dim (U)$ for any vector space $U$ in
  which $B$ is representable.
  Let $\varphi$ be a bilinear form on $U$, and let $u_1, \ldots, u_n$ be $n$
  vectors of $U$ such that $\varphi(u_i,u_j) = b_{ij}$ for all
  $1 \leq i,j \leq n$, where $(b_{i,j})_{1 \leq i,j \leq n} = B$. %
  Let $B(j_1), \dots, B(j_r)$ be $r$ linearly independent column vectors of $B$
  with indices $j_1, \dots, j_r$. We claim that the corresponding vectors
  $u_{j_1}, \dots, u_{j_r}$ are linearly independent in $U$. Suppose that a
  linear combination $ \displaystyle {\sum_{k= 1}^{r}}\lambda_{j_k}u_{j_k}$ is
  zero. Then, for every vector $u\in U$,
  $\varphi ( \displaystyle {\sum_{k= 1}^{r}}\lambda_{j_k}u_{j_k}, u)=0$. This
  rewrites as
  $ \displaystyle {\sum_{k= 1}^{r}}\lambda_{j_k}\varphi (u_{j_k}, u)=0$. In
  particular,
  $ \displaystyle {\sum_{k= 1}^{r}}\lambda_{j_k}\varphi (u_{j_k}, u_i)=0$ for
  every $i= 1,\dots, n$. That is,
  $ \displaystyle {\sum_{k= 1}^{r}}\lambda_{j_k}B_{j_k}=0$. Since these column
  vectors are linearly independent, the $\lambda_{j_k}$'s are zero. This proves
  our claim.

  $2)$ Suppose that $\varphi$ is nondegenerate and $U$ is spanned by the vectors
  $u_{1}, \dots, u_{n}$. Then $r\geq \dim (U)$. The proof follows the same lines
  as above. Let $s:= \dim (U)$. Then, among the $u_j$'s there are $s$ linearly
  independent vectors, say $u_{j_1}, \dots, u_{j_s}$. We claim that the column
  vectors $B(j_1), \dots, B(j_s)$ are linearly independent. Suppose that a
  linear combination $ \displaystyle {\sum_{k= 1}^{s}}\lambda_{k}B_{j_k}$ is
  zero. This yields
  $\displaystyle {\sum_{k= 1}^{s}}\lambda_{k}\varphi (u_{j_k}, u_i)=0$ for every
  $i$, $1\leq i\leq n$, hence
  $\varphi (\displaystyle {\sum_{k= 1}^{s}}\lambda_{k}u_{j_k}, u_i)=0$.  Since
  the $u_i$'s generate $U$, we have
  $\varphi (\displaystyle {\sum_{k= 1}^{s}}\lambda_{k}u_{j_k}, u)=0$ for every
  $u\in U$. Since the form $\varphi$ is nondegenerate,
  $\displaystyle {\sum_{k= 1}^{s}}\lambda_{k}u_{j_k}=0$. Since the vectors
  $u_{j_1}, \dots, u_{j_s}$ are linearly independent, the $\lambda_k$'s are all
  zero.  This proves our claim.

  $3)$ Suppose that $B$ is representable in a vector space $U$ equipped with a
  symmetric bilinear form $\varphi$. Then $B$ is representable in a quotient of
  $U$ equipped with a nondegenerate bilinear form.
\end{proof}  

Theorem~\ref{thmrank-dimension} follows immediately from Lemma~\ref{rank=dim}.   

\begin{rem}Theorem~\ref{thmrank-dimension} for the symplectic dimension of
  graphs over $\mathbb F_2$ is due to 
  \cite{godsil-royle2001}. The minrank over several fields has been intensively
  studied, see \citet{fallat-hogben2007} for a survey.  These
  authors consider the problem of minrank of graphs, and obtain a combinatorial
  description for the minimum rank of trees. In the next section, we only state
  that in case of trees, the Boolean dimension, geometric dimension and the
  minimum rank coincide, thus the formula given in Theorem~\ref{thm:trees} below
  for the Boolean dimension gives yet another combinatorial description for the
  minimum rank of a tree.
\end{rem}

\section{Boolean dimension of trees}

In this section, we show that there is a nice combinatorial interpretation for
the Boolean dimension of trees. We mention first the following result of
Houmem Belkhechine et al. \cite{bbbp2012}.

\begin{lem}
  Let $G \coloneqq (V,E)$ be a graph, with $V\neq \emptyset$. Let $m\in \NN$,
  and let $f\colon V\to \mathbb{F}_2^m$ be a representation of $G$ in the vector
  space $\mathbb{F}_2^m$ equipped with a symmetric bilinear form $\varphi$. Let
  $A\subseteq V$ such that $A\neq \emptyset$. Suppose that for all finite
  $X\subseteq A, X \neq \emptyset$, there exists $v\in V\setminus X$ such that
  $\card{N_G(v)\cap X}$ is odd. Then $\{f(x)\mid x \in A\}$ is linearly
  independent in the vector space $\mathbb{F}_2^m$.
\end{lem}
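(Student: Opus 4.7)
The plan is to argue by contradiction. Suppose $\{f(x) : x \in A\}$ is linearly dependent in $\mathbb{F}_2^m$. Since linear dependence over $\mathbb{F}_2$ of an arbitrary family means that some finite nontrivial sum of distinct members equals $0$, there exists a finite nonempty subset $X \subseteq A$ with
\[
\sum_{x \in X} f(x) = 0.
\]

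Now I apply the hypothesis to this $X$: there exists $v \in V \setminus X$ such that $\card{N_G(v) \cap X}$ is odd. The key computation uses bilinearity of $\varphi$ together with the defining property of a representation. On the one hand,
\[
\varphi\Bigl(f(v),\,\sum_{x \in X} f(x)\Bigr) = \varphi(f(v),0) = 0.
\]
On the other hand, by bilinearity,
\[
\varphi\Bigl(f(v),\,\sum_{x \in X} f(x)\Bigr) = \sum_{x \in X} \varphi(f(v),f(x)).
\]
Since $v \notin X$, for each $x \in X$ we have $v \neq x$, and the representation property over $\mathbb{F}_2$ gives $\varphi(f(v),f(x)) = 1$ if $v \sim x$ in $G$ and $0$ otherwise. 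Hence the right-hand side reduces modulo $2$ to the parity of $\card{N_G(v) \cap X}$, which is $1$. This yields $0 = 1$ in $\mathbb{F}_2$, the desired contradiction.

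I do not foresee any real obstacle here: the argument is a direct bilinearity computation, and the nontrivial hypothesis — that one can always find a vertex $v$ outside $X$ with an odd-size neighbourhood inside $X$ — is exactly what is needed to exhibit a vector $f(v)$ witnessing non-orthogonality to the supposed vanishing combination. The only small point to keep track of is that $v$ must lie outside $X$ (so that the representation property applies pairwise with each $x \in X$), but this is precisely what the hypothesis guarantees.
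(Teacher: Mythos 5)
Your proof is correct and is essentially the paper's own argument: both rest on the same computation $\varphi\bigl(f(v),\sum_{x\in X}f(x)\bigr)=\sum_{x\in X}\varphi(f(v),f(x))\equiv\card{N_G(v)\cap X}\pmod 2$, with the only difference being that you phrase it as a contradiction from an assumed dependence while the paper directly verifies that every finite nonempty subsum is nonzero.
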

\begin{proof}
  Let $X$ be a non empty finite subset of $A$.  We claim that
  $\sum_{x\in X}f(x)\not=0$. Indeed, let $v\in V\setminus X$ such that
  $\vert N_G(v)\cap X\vert$ is odd. We have
  $\varphi(\sum_{x\in X} f(x), f(v))= \sum_{x\in X}\varphi(f(x), f(v))$. This
  sum is equal to $\vert N_G(v)\cap X\vert$ modulo $2$. Thus
  $\sum_{x\in X} f(x)\not =0$ as claimed. Since this holds for every finite
  subset $X$ of $A$, the conclusion follows.
\end{proof} 

This suggests the following definition.

\begin{definition}[\citep{bbbp2012}] Let $G \coloneqq (V,E)$
  be a graph.  A set $A\subset V$ is called {\em independent $\pmod{2}$} if for
  all finite $X\subseteq A, X \neq \emptyset$, there exists $v \in V\setminus X$
  such that $\card{N_G(v)\cap X}$ is odd, otherwise $A$ is said to be dependent
  $\pmod{2}$. Let $\ind(G)$ be the maximum size of an independent set $\pmod{2}$
  in $G$. \textbf{From now, we omit $\pmod{2}$ unless it is necessary to talk
    about independence in the graph theoretic sense.}
\end{definition}

\begin{cor}\label{cor:independent}
  For every graph $G$, we have $\ind(G)\leq \ddgeo(G)$.
\end{cor}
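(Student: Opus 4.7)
The plan is to show this is essentially a direct consequence of the preceding lemma, with only a small observation needed to pass from finite-dimensional representations to the general situation. I would argue as follows.

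First, I would take any geometric representation $f \colon V(G) \to U$ of $G$ in a vector space $U$ over $\mathbb F_2$ whose dimension realizes $\ddgeo(G)$, equipped with the symmetric bilinear form $\varphi$. Let $A \subseteq V(G)$ be any set that is independent $\pmod 2$. By the definition of independence $\pmod 2$, for every finite nonempty $X \subseteq A$ there exists $v \in V\setminus X$ with $\card{N_G(v)\cap X}$ odd. This is exactly the hypothesis of the preceding lemma, applied to $A$ (equivalently, applied to each finite $X \subseteq A$). The lemma is stated for $U = \mathbb F_2^m$, but its proof uses only that $\varphi$ is bilinear and that $f$ is a representation; the same calculation $\varphi\bigl(\sum_{x\in X} f(x),\, f(v)\bigr) = \card{N_G(v)\cap X} \pmod 2 \neq 0$ shows $\sum_{x \in X} f(x) \neq 0$ for every finite nonempty $X \subseteq A$, which is precisely linear independence of $\{f(x) : x \in A\}$ in $U$ (linear independence being a finitary property).

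From linear independence of $\{f(x): x \in A\}$ in $U$ we get $\card{A} \leq \dim(U) = \ddgeo(G)$. Since this holds for every independent $\pmod 2$ set $A$, taking the supremum on the left yields $\ind(G) \leq \ddgeo(G)$.

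There is no real obstacle here: the only point to be slightly careful about is the observation that the preceding lemma, though literally stated for $U = \mathbb F_2^m$, gives linear independence in any vector space in which a geometric representation of $G$ exists, because its proof is purely bilinear and uses only finitely many vectors at a time. Once this is pointed out, the corollary is immediate.
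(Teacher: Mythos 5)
Your proof is correct and takes essentially the same route as the paper, which states the corollary as an immediate consequence of the preceding lemma: apply the lemma to an independent $\pmod 2$ set $A$ under a representation realizing $\ddgeo(G)$ and bound $\card{A}$ by the dimension. Your remark that the lemma, though stated for $\mathbb{F}_2^m$, holds in any representing space because the argument is purely bilinear and finitary is a legitimate clarification (needed if $\ddgeo(G)$ is infinite) but does not alter the approach.
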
 

\begin{problem} Does the equality hold?
\end{problem} 

Note that the independent sets $\pmod {2}$ of a graph do not form a matroid in
general. Indeed, let $G$ be made of six vertices, three, say $\{a,b,c\}$ forming
a clique, the three others, say $a',b',c'$ being respectively connected to $a,b$
and $c$. Then $\{a', a, b,c\}$ is independent $\pmod {2}$, hence
$4\leq \ind(G)$. Also, $\{a', b', c'\}$ is independent $\pmod {2}$ but cannot be
extended to a larger independent set $\pmod {2}$. Since $G$ is the Boolean sum
of a $3$-vertex clique and three edges, $\ddBool(G)\leq 4$. Finally,
$\ind(G)= \ddgeo(G)= \ddBool(G)=4$.
 
From Corollary~\ref{cor:independent} above, we deduce the following result.

\begin{thm}\label{thm:paths}
  The Boolean dimension of a path on $n$ vertices ($n\in \NN, n > 0$) is
  $n-1$. Every other $n$-vertex graph, with $n \geq 2$, has dimension at most
  $n-2$.
\end{thm}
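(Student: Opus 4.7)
My plan is to split the proof into two parts: prove $\ddBool(P_n)=n-1$ using Corollary~\ref{cor:independent} together with the general upper bound, and prove the upper bound $\ddBool(G)\le n-2$ for every non-path graph on $n\ge 2$ vertices by induction on $n$.

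For the path, the inequality $\ddBool(P_n)\le n-1$ is already recorded in the preliminaries. For the reverse, I combine Corollary~\ref{cor:independent} with $\ddgeo(G)\le\ddBool(G)$ to obtain $\ind(G)\le\ddBool(G)$, and exhibit an independent set modulo $2$ of size $n-1$ in $P_n$. Labelling the path as $v_1,v_2,\dots,v_n$ with edges $\{v_i,v_{i+1}\}$ for $1\le i<n$, I take $A=\{v_1,\dots,v_{n-1}\}$. For any non-empty $X\subseteq A$, setting $k:=\max\{i:v_i\in X\}\le n-1$, the vertex $v_{k+1}\in V\setminus X$ has $v_k$ in $X$ as a neighbour, while its other possible neighbour $v_{k+2}$ lies outside $X$ by the maximality of $k$; hence $\abs{N_{P_n}(v_{k+1})\cap X}=1$ is odd, $A$ is independent modulo $2$, and $\ddBool(P_n)\ge n-1$.

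For the upper bound on non-paths, the base case $n=2$ is trivial. In the inductive step, if $G$ has an isolated vertex $v$, the general bound applied to $G-v$ gives $\ddBool(G)=\ddBool(G-v)\le n-2$; if $G$ is disconnected with components of order at least $2$, subadditivity of $\ddBool$ over disjoint unions combined with the general bound on each component gives $\ddBool(G)\le n-2$. For connected $G$, the pendant decomposition $G=(V,E\setminus E_G(x))\dot{+}K^V_{N_G(x)}\dot{+}K^V_{N_G(x)\cup\{x\}}$ gives $\ddBool(G)\le\ddBool(G-x)+1$ whenever $x$ is a leaf, since $K^V_{N_G(x)}$ then reduces to a single vertex with no edges. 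Hence if some leaf $x$ exists with $G-x$ not a path, the induction hypothesis yields $\ddBool(G-x)\le n-3$ and therefore $\ddBool(G)\le n-2$.

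The remaining subcases, where this argument fails, are (i) $G$ is a tree in which every leaf-removal yields a path, and (ii) $G$ has minimum degree at least $2$. In (i), applying the condition at the two endpoints of the residual path forces $G$ to be the star $K_{1,3}$, whose Boolean dimension equals $2=n-2$ via the explicit decomposition $K^V_{N(v)}\dot{+}K^V_{N(v)\cup\{v\}}$. In (ii), $G$ contains a cycle; if $G\neq C_n$ there is a vertex of degree at least $3$ to exploit through a careful choice of degree-two vertex, combined with a strengthened inductive hypothesis. The main obstacle is therefore $G=C_n$: the inductive estimate from removing any vertex of $C_n$ loses by two, so a direct Boolean representation of $C_n$ in $\mathbb{F}_2^{n-2}$ is required. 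Such a representation can be built by grouping consecutive cycle edges into overlapping triangle cliques $\{v_{2j-1},v_{2j},v_{2j+1}\}$ and cancelling the induced chords with additional cliques on odd-indexed vertices; verification for small $n$ (for instance $n=4,5,6,7$) confirms the pattern generalises.
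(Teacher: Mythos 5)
Your first part (the lower bound for the path via an independent set $\pmod 2$ of size $n-1$ and Corollary~\ref{cor:independent}) is exactly the paper's argument and is fine; so are the reductions for disconnected graphs, for a leaf whose removal leaves a non-path, and the identification of $K_{1,3}$ as the only surviving tree in your case (i). The genuine gap is case (ii), minimum degree at least $2$, which is where all the remaining work lies and where you offer no actual argument. For $G$ of minimum degree $\geq 2$ that is not a cycle, the sentence about ``a careful choice of degree-two vertex, combined with a strengthened inductive hypothesis'' is a placeholder, not a proof: your only quantitative tool is $\ddBool(G)\leq\ddBool(G-x)+1$ for a \emph{leaf} $x$, and deleting a vertex of degree $\geq 2$ via the pendant decomposition costs two cliques, which only reproduces the trivial bound $n-1$. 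You never say what the strengthened hypothesis is, so the induction does not close. The cycle case is also not established: no construction is actually verified, the described pattern is ambiguous, and its most natural reading (the triangles $\{v_{2j-1},v_{2j},v_{2j+1}\}$ plus the single clique on all odd-indexed vertices) already fails at $n=7$, where that clique cancels the chords $\{1,3\},\{3,5\},\{5,7\}$ but introduces the two new edges $\{1,5\}$ and $\{3,7\}$, costing two further cliques. ``Verification for small $n$ confirms the pattern generalises'' is not a proof.

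The paper closes both holes with one device you do not have: the graph $G^x:=G_{-x}\dot{+}K^{V\setminus\{x\}}_{N_G(x)}$, obtained by deleting $x$ and Boolean-adding a clique on its neighbourhood. Since $G$ and $G^x$ (with $x$ restored as an isolated vertex) differ by the single clique $K^V_{N_G(x)\cup\{x\}}$, one gets $\lvert\ddBool(G)-\ddBool(G^x)\rvert\leq 1$ for \emph{every} vertex $x$, regardless of its degree. Arguing contrapositively, if $\ddBool(G)=n-1$ then $G^x$ has $n-1$ vertices and dimension at least $n-2$, hence is a path for every $x$ by induction; comparing $G^x\dot{+}G^y=K^V_{N_G(x)\cup\{x\}}\dot{+}K^V_{N_G(y)\cup\{y\}}$ for pairs $x,y$ then forces $d_G(x)\leq 2$ for all $x$ (once $n\geq 5$), so $G$ is a disjoint union of paths and cycles, and connectivity plus the cycle bound finish. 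For cycles the paper uses the one-line recursion $C_n=C_{n-1}\dot{+}K_3$ on $\{0,n-2,n-1\}$, in which the shared edge $\{0,n-2\}$ cancels, giving $\ddBool(C_n)\leq n-2$ by induction. If you want to keep your direct-induction framing you should import both ingredients; as written, the proof does not go through.
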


\begin{proof}Let $P_n$ be the path on $\{0, \dots, n-1\}$, whose edges are pairs
  $\{i, i+1\}$, with $i<n-1$. Suppose $n \geq 2$. Since $P_n$ is the Boolean sum
  of its edges, $\ddBool (P_n)\leq n-1$. Let $A:= \{0, \dots, n-2\}$. Then $A$
  is independent $\pmod{2}$. Indeed, let $X$ be a nonempty subset of $A$ and $x$
  be its largest element, then the vertex $v:=x+1$ is such that
  $\card{N_{P_n}(v)\cap X}=1$. Thus $\ind (P_n)\geq n-1$. From the inequalities
  $n-1\leq \ind(P_n)\leq \ddgeo(P_n)\leq \ddBool (P_n)\leq n-1$, the fact that
  the dimension of $P_n$ is $n-1$ follows.

  Now we prove that if the Boolean dimension of a graph $G$ on $n$ vertices is
  $n-1$, then $G$ is a path.  Observe first that $G$ is connected. Otherwise,
  $G$ is the direct sum $G'\oplus G''$ of two non trivial graphs $G'$ and $G''$
  with respectively $n'$ and $n''$ vertices.  As it is immediate to see,
  $\ddBool(G)=\ddBool (G'\oplus G'')\leq \ddBool(G')+\ddBool(G'')\leq
  n'-1+n''-1=n-2$.  Next we observe that $G$ cannot be a cycle. Indeed, an easy
  induction shows that cycles on $n$ vertices have dimension at most
  $n-2$. Indeed, the cycle $C_3$ is a clique thus has dimension $1$. For
  $n\geq 4$, the cycle $C_n$ on $n$ vertices $\{0, \dots, n-1\}$ is the Boolean
  sum of the cycle on the first $n-1$ vertices and the $3$-vertex cycle on
  $\{0, n-2, n-1\}$, thus its dimension is at most $n-2$ (in fact it is equal to
  $n-2$; this is obvious for $C_4$ while for $n\geq 5$, its dimension is at
  least $n-2$ since it contains a path on $n-1$ vertices). Next, we check that
  if $G$ has no more than four vertices, then it is a path. For the final step,
  we argue by induction, but we need a notation.  Let $G:= (V, E)$ be a graph
  and $x \in V$. Let $G_{-x}$ be the subgraph of $G$ induced by
  $V\setminus \{x\}$. Let $G^{x} := (G_{- x}) \dot{+} K_G(x)$, where
  $K_G(x):=K_{N_{G}(x)}^{V\setminus \{x\}}$. Let $\dot{G}^x$ be the graph
  obtained by adding to $G^x$ the vertex $x$ as an isolated vertex. In simpler
  terms, we obtain $G^x$ by deleting from $G$ the vertex $x$ and by adding, via
  the Boolean sum, all edges between vertices of $N_{G}(x)$. For an example, if
  $G$ is a path then $G^x$ is a path on $V\setminus \{x\}$.
  
  \begin{claim}\label{claim:exponent} If $V$ is finite then
    $\vert \ddBool (G)- \ddBool (G^{x})\vert \leq 1$.
  \end{claim}

  \noindent{\bf Proof of Claim~\ref{claim:exponent}}
  Note that $\dot{G}^x \dot{+} K_{N_{G}(x)\cup \{x\}}^V= G$ and
  $G \dot{+} K_{N_{G}(x)\cup \{x\}}^V= \dot {G}^x$.  Thus
  $\vert \ddBool (G)- \ddBool(\dot {G}^{x})\vert \leq \ddBool (K_{N_{G}(x)\cup
    \{x\}}^V)$.  Since $K_{N_{G}(x)\cup \{x\}}$ is a clique, its Boolean
  dimension is $1$; and since $\dot{G}^x$ and $G^x$ differ by an isolated
  vertex, they have the same Boolean dimension.  The claimed inequality follows.

  Now, let $G$ be our graph on $n$ vertices such that $\ddBool(G)=n-1$. Suppose
  that every graph $G'$ on $n'$ vertices, $n'<n$, is a path whenever
  $\ddBool(G')=n'-1$.

  \begin{claim}\label{claim:path1} $G^x$ is a path for every $x\in V(G)$.
  \end{claim}
  Indeed, since $G^x$ has $n-1$ vertices, $\ddBool(G^x)\leq n-2$; since
  $\ddBool(G)=n-1$, the claim above ensures that $\ddBool(G^x)=n-2$. The
  conclusion follows for the hypothesis on graphs with $n-1$ vertices.

  \begin{claim}\label{claim:path2}
    Let $x, y\in V(G)$ with $x\not =y$. If $G^{x}$ and $G^{y}$ are two paths
    $P_x$ and $P_y$, then $d_G(x), d_G(y)\leq 2$ if $\{x,y\} \not \in E(G)$, and
    $d_G(x), d_G(y)\leq 3$ otherwise.
  \end{claim}
  \noindent {\bf Proof of Claim~\ref{claim:path2}}
  We have
  $G^{x}\dot{+}G^{y}=G \dot{+} K_{N_{G}(x)\cup \{x\}}^V\dot{+}G \dot{+}
  K_{N_{G}(y)\cup \{y\}}^V=K_{N_{G}(x)\cup \{x\}}^V\dot{+}K_{N_{G}(y)\cup
    \{y\}}^V$. Since $G^{x}$ and $G^{y}$ are two paths $P_x$ and $P_y$,  
  $P_x\dot{+} P_y= K_{N_{G}(x)\cup \{x\}}^V\dot{+}K_{N_{G}(y)\cup \{y\}}^V$. 
  We
  have $d_{P_x\dot{+} P_y}(x)\leq 2$, hence for the Boolean sum
  $E_G(x)\dot{+} E_G(y)$ of stars $E_G(x)$ and $E_G(y)$, we have
  $d_{E_G(x)\dot{+} E_G(y)}(x)\leq 2$. The conclusion of the claim follows.

  Now let $x\in V$. Since $d_G(x)\leq 3$ and $n\geq 5$ there is some vertex $y$
  not linked to $x$ by an edge. Hence by Claim~\ref{claim:path2},
  $d_G(x)\leq 2$. From this follows that $G$ is a direct sum of paths and
  cycles.

  Since $G$ must be connected and cannot be a cycle, $G$ is a path.
\end{proof}

We thank 
\cite{bondy} for suggesting this result several years
ago. In fact, it is a consequence of previous results about geometric dimension
of graphs, obtained for general fields 
\citep{bento-lealduarte,rheinboldt-shepherd}.

We go from paths to trees as follows. 

\begin{definition}
  Let $T \coloneqq (V,E)$ be a tree. A {\em star decomposition} $\Sigma$ of $T$
  is a family $\{S_1,\ldots,S_k\}$ of subtrees of $T$ such that each $S_i$ is
  isomorphic to $K_{1,m}$ (a star) for some $m \geq 1$, the stars are mutually
  edge-disjoint, and each edge of $T$ is an edge of some $S_i$. For a star
  decomposition $\Sigma$, let $t(\Sigma)$ be the number of trivial stars in
  $\Sigma$ (stars that are isomorphic to $K_{1,1}$), and let $s(\Sigma)$ be the
  number of nontrivial stars in $\Sigma$ (stars that are isomorphic to $K_{1,m}$
  for some $m > 1$). We define the parameter
  $m(T) \coloneqq \min_{\Sigma} \{t(\Sigma) + 2s(\Sigma)\}$ over all star
  decompositions $\Sigma$ of $T$. A star decomposition $\Sigma$ of $T$ for which
  $t(\Sigma) + 2s(\Sigma) = m(T)$ is called an {\em optimal star decomposition}
  of $T$.
\end{definition}

The Boolean dimension of a graph counts the minimum number of cliques needed to
obtain this graph as a Boolean sum.  If $\Sigma:=\{S_1,\ldots,S_k\}$ is a star
decomposition of a tree $T$, one has
$\ddBool (T) \leq \displaystyle {\sum_{i= 1}^{k}}\ddBool (S_i)$. Since
$\ddBool (S_i)= 1$ if $S_i$ is a trivial star, and $\ddBool (S_i)= 2$ otherwise
(note that if $S_i= K_{1, m}$, it is the Boolean sum of a clique on $m+1$
vertices and a clique on a subset of $m$ vertices), hence we have
$\displaystyle {\sum_{i= 1}^{k}}\ddBool (S_i)= t(\Sigma)+2s(\Sigma)$, hence
$\ddBool(T)\leq t(\Sigma)+2s(\Sigma)$. The inequality $\ddBool(T) \leq m(T)$
follows.

Here is our result. 

\begin{thm}\label{thm:trees}
  For all trees $T$, we have $\ind(T) = \ddBool(T) = m(T)$.
\end{thm}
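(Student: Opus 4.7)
The proof plan is as follows. We already have $\ind(T) \leq \ddgeo(T) \leq \ddBool(T) \leq m(T)$ from the preceding material: the first inequality is Corollary~\ref{cor:independent}, the second is immediate from the definitions, and the third is the upper bound established just before the theorem statement. So the only missing piece is $m(T) \leq \ind(T)$, which I will prove by induction on $|V(T)|$, with trivial base cases.

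For the inductive step, pick a leaf $\ell$ of $T$ with unique neighbor $p$ and set $T' := T - \ell$. A direct star-decomposition argument (restrict an optimal decomposition of $T$ to $T'$; conversely, append $\{\ell,p\}$ as a trivial star to an optimal decomposition of $T'$) shows $m(T) - 1 \leq m(T') \leq m(T)$. If $m(T') = m(T)$, then by the induction hypothesis there is an $A' \subseteq V(T')$, independent $\pmod{2}$ in $T'$, with $|A'| = m(T)$; this same set is still independent $\pmod{2}$ in $T$, because every nonempty $X \subseteq A'$ avoids $\ell$, so any $T'$-witness $u \in V(T') \setminus X$ satisfies $N_T(u) \cap X = N_{T'}(u) \cap X$ (whether $u = p$ or not, since $\ell \notin X$) and remains a witness in $T$.

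The substantive case is $m(T') = m(T) - 1$. Here I form $A := A' \cup \{p\}$, augmenting by the \emph{neighbor} $p$ rather than by $\ell$ itself, where $A'$ is an optimal independent set for $T'$ with the side-condition $p \notin A'$. Then $|A| = m(T)$, and independence of $A$ in $T$ is verified as follows: given a nonempty $X \subseteq A$, if $p \in X$ then the vertex $\ell \notin A$ is a witness since $|N_T(\ell) \cap X| = |\{p\} \cap X| = 1$; if $p \notin X$ then $X \subseteq A'$, and a $T'$-witness $u \in V(T') \setminus X$ still works in $T$ (for $u \neq p$ trivially, and for $u = p$ because $\ell \notin X$ gives $|N_T(p) \cap X| = |N_{T'}(p) \cap X|$).

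The main obstacle is the side-condition $p \notin A'$ required in the second case. My strategy is to fix an optimal star decomposition $\Sigma$ of $T$ and choose $\ell$ so that the edge $\{\ell, p\}$ lies either in a trivial star of $\Sigma$ or in a star centered at $p$ with exactly two leaves -- such an $\ell$ must exist when $m(T') = m(T) - 1$, as those are precisely the configurations whose removal drops $m$ by $1$. Restricting $\Sigma$ to $T'$ is then an optimal decomposition of $T'$, and a recursive construction based on $\Sigma$ (exploiting the freedom in orienting trivial stars and in designating the ``distinguished'' leaf of each nontrivial star) should produce an $A'$ of the required size avoiding $p$. Making this bookkeeping formal likely requires strengthening the inductive hypothesis to assert the existence of an optimal $A$ avoiding a prescribed vertex under suitable compatibility conditions with a fixed optimal decomposition; this strengthening is the technical heart of the proof.
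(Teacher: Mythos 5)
Your upper-bound chain $\ind(T)\leq \ddgeo(T)\leq \ddBool(T)\leq m(T)$ and the reduction of the theorem to $m(T)\leq \ind(T)$ match the paper, as does your verification that $A'\cup\{p\}$ is independent $\pmod 2$ once an optimal $A'$ avoiding $p$ is in hand. But the argument has a genuine gap exactly where you locate the ``technical heart'': you never establish that, in the case $m(T-\ell)=m(T)-1$, the tree $T-\ell$ admits a \emph{maximum} independent set $\pmod 2$ avoiding $p$. This is not routine bookkeeping. The unrestricted avoidance statement is false: in the path $a$--$p$--$b$ every independent set $\pmod 2$ of maximum size $2$ contains the centre $p$, because $\{a,b\}$ has no witness ($\card{N(p)\cap\{a,b\}}=2$ is even). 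So any correct version of your claim must exploit the particular way $p$ sits inside $T$, and your proposed selection rule for $\ell$ (that $\{\ell,p\}$ lie in a trivial star or in a star centred at $p$ with exactly two leaves of some optimal $\Sigma$) is itself unproved and awkwardly quantified, since whether $m(T-\ell)=m(T)-1$ depends on which $\ell$ you pick. The natural fallback of adding $\ell$ instead of $p$ when every optimal $A'$ contains $p$ also does not go through for a neighbour $p$ of arbitrary degree: when the only $T'$-witness for some $X\subseteq A'$ is $p$ itself and the parity flips after $\ell$ is inserted, the witness must be re-routed, and that repair needs control on the degree of $p$.

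The paper circumvents precisely this difficulty by not removing an arbitrary leaf. It shows that every tree on at least $3$ vertices contains either a cherry or a degree-$2$ vertex adjacent to a leaf (look at the second-to-last vertex of a longest path), and treats the two configurations separately in Propositions~\ref{prop:cherry} and~\ref{prop:deg2}. In the cherry case the independent set is assembled as $\{x,u_1\}\cup\bigcup_i A_i$ from the branches, with no avoidance condition required; in the degree-$2$ case the set is $A'\cup\{y\}$ or $A'\cup\{z\}$ according to whether $y\in A'$, and the witness-repair in Case 2 of Proposition~\ref{prop:deg2} works only because $y$ has degree $2$, so that $N_T(y)\setminus\{z\}$ is a single vertex. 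To complete your proof you would have to either prove the avoidance lemma in your restricted configuration (strengthening the induction hypothesis as you anticipate) or switch to a structured reduction of this kind; as written, the inductive step is not established.
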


We introduce the following definition.

\begin{definition}
  A {\em cherry} in a tree $T$ is a maximal subtree $S$ isomorphic to
  $K_{1,m}$ for some $m > 1$ that contains $m$ end vertices of $T$. We
  refer to a cherry with $m$ edges as an $m$-cherry.
\end{definition}

\begin{prop}\label{prop:cherry}
  Let $T \coloneqq (V,E)$ be a tree that contains a cherry. If all
  proper subtrees $T^\prime$ of $T$ satisfy $\ind(T^\prime) = m(T^\prime)$,
  then $\ind (T) = m(T)$.
\end{prop}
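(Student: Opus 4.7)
The plan is to establish $\ind(T)\geq m(T)$; combined with $\ind(T)\leq \ddBool(T)\leq m(T)$ from the preceding discussion, this gives the proposition. Write the cherry as $K_{1,m}$ with center $c$ and leaves $\ell_1,\dots,\ell_m$, where $m\geq 2$. If $T=K_{1,m}$ (so the cherry coincides with $T$), a direct check shows that $\{c,\ell_1\}$ is independent modulo $2$: take $c$ as a witness for $X=\{\ell_1\}$, any $\ell_j$ with $j\geq 2$ for $X=\{c\}$, and $\ell_2$ for $X=\{c,\ell_1\}$ since $|N(\ell_2)\cap X|=|\{c\}|=1$. Since $m(K_{1,m})=2$, this settles the base case. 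Henceforth assume that $c$ has a non-leaf neighbor, and set $T':= T-\ell_1$, a proper subtree.

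The hypothesis supplies an independent modulo $2$ set $A'\subseteq V(T')$ of size $m(T')$. The first observation is that $A'$ is automatically independent modulo $2$ in $T$: for any nonempty $X\subseteq A'\subseteq V(T')$ and any $T'$-witness $v'\in V(T')\setminus X$, since $\ell_1\notin X$ we have $N_T(v')\cap X=N_{T'}(v')\cap X$, so $v'$ remains a witness in $T$. A short star-decomposition manipulation yields $0\leq m(T)-m(T')\leq 1$: adjoining $c\ell_1$ as a trivial star to an optimal decomposition of $T'$ produces a decomposition of $T$ of count $m(T')+1$, and deleting $c\ell_1$ from the star of an optimal decomposition of $T$ that contains it produces a decomposition of $T'$ of count at most $m(T)$.

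If $m(T)=m(T')$, then $|A'|=m(T)$ and the proof is complete. In the remaining case $m(T)=m(T')+1$, the plan is to augment $A'$ by the cherry center: after arranging that $c\notin A'$, I set $A:= A'\cup\{c\}$, which has size $m(T)$. To verify independence of $A$ in $T$, consider a nonempty $X\subseteq A$. If $c\notin X$, then $X\subseteq A'$ and the witness transfer above applies. If $c\in X$, then $\ell_1\in V(T)\setminus A$ satisfies $N_T(\ell_1)\cap X=\{c\}$, so $|N_T(\ell_1)\cap X|=1$ is odd and $\ell_1$ is a witness. The main obstacle is producing such an optimal $A'$ with $c\notin A'$ in the case $m(T)=m(T')+1$: in this regime every optimal star decomposition of $T'$ must place the edge $c\ell_2$ in a trivial star (otherwise reinserting $c\ell_1$ would not raise the count), which makes $c$ a peripheral vertex of $T'$, and I expect this peripherality to allow exchanging $c$ for another vertex in some maximum independent modulo $2$ set of $T'$, perhaps by analyzing the hypothesis applied to the components of the forest $T'-c$.
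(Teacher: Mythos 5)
Your overall strategy differs from the paper's: you delete one leaf $\ell_1$ of the cherry and induct on $T'=T-\ell_1$, whereas the paper decomposes $T$ around the cherry center $x$ into the star at $x$ and the maximal subtrees $T_i$ hanging off the non-leaf neighbours of $x$, shows that some optimal star decomposition makes $x$ the center of a nontrivial star (so $m(T)=2+\sum_i m(T_i)$), and exhibits $\{x,u_1\}\cup\bigcup_i A_i$ directly as an independent set $\pmod 2$ of size $m(T)$. Your bookkeeping around $m$ (that $0\le m(T)-m(T')\le 1$, the transfer of witnesses from $T'$ to $T$, and the use of $\ell_1$ as a universal witness for any $X\ni c$) is all correct.

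However, there is a genuine gap, and you have identified it yourself: in the case $m(T)=m(T')+1$ you need a \emph{maximum} independent set $\pmod 2$ of $T'$ that avoids the center $c$, and you offer only the expectation that the ``peripherality'' of $c$ permits such an exchange. This is precisely the hard content of the proposition and cannot be waved away: the exchange property is false for trees in general. For instance, in $P_4=v_1v_2v_3v_4$ the only maximum independent sets $\pmod 2$ are $\{v_1,v_2,v_3\}$ and $\{v_2,v_3,v_4\}$, so \emph{every} maximum independent set contains the degree-$2$ vertex $v_2$; hence any argument must extract the avoidance of $c$ specifically from the hypothesis $m(T)=m(T')+1$ (equivalently, from the fact that no optimal star decomposition of $T'$ centers a nontrivial star at $c$), and no such derivation is given. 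Until that step is supplied -- for example by an analysis of the components of $T'-c$ as you suggest, or by switching to the paper's decomposition into the subtrees $T_i$, which sidesteps the issue entirely because the sets $A_i$ live away from $x$ by construction -- the proof is incomplete.
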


\begin{proof}
  Let $x\in V$ be the center of a $k$-cherry in $T$, with
  $N_T(x) = \{u_1,\ldots, u_k, w_1,\ldots,w_{\ell}\}$, where $d_T(u_i) = 1$ for
  all $i$, and $d_T(w_i) > 1$ for all $i$.  For each $i = 1 \text{ to } \ell$,
  let $T_i$ be the maximal subtree that contains $w_i$ but does not contain $x$.

  First, we show that any optimal star decomposition of $T$ in which $x$ is not
  the center of a nontrivial star can be transformed into an optimal star
  decomposition in which $x$ is the center of a nontrivial star. Consider an
  optimal star decomposition $\Sigma$ in which $x$ is not the center of a
  nontrivial star. Therefore, edges $xu_i$ are trivial stars of $\Sigma$. Now if
  $k > 2$ or if there is a trivial star $xw_i$ in $\Sigma$, then we could have
  improved $t(\Sigma) + 2 s(\Sigma)$ by replacing all trivial stars containing
  $x$ by their union, which is a star centered at $x$. Hence, assume that $k=2$
  and each $w_i$ is the center of a nontrivial star $S_i$, which contains the
  edge $xw_i$. Now replace each $S_i$ by $S_i^\prime \coloneqq S_i - xw_i$, and
  add a new star centered at $x$ with edge set
  $\{xw_1,\ldots,xw_{\ell},xu_1,xu_2\}$. The new decomposition is also optimal.

  Now consider an optimal star decomposition $\Sigma$ in which $x$ is
  the center of a nontrivial star. The induced decompositions on $T_i$ are all
  optimal since $\Sigma$ is optimal. For each
  $i\in \{1,\ldots, {\ell}\}$, let $A_i$ be a maximum size independent set in
  $T_i$. Hence $\card{A_i} = \ind(T_i) = m(T_i)$ for all $i\geq 1$, and
  $m(T) = 2+\sum_i m(T_i) = 2+\sum_i \ind(T_i)$. We show that
  $A \coloneqq \{x,u_1\} \cup (\cup_i A_i)$ is a maximum size
  independent set in $T$.

  Consider a non-empty set $X\subseteq A$. We show that there exists
  $v\in V\setminus X$ such that $\card{N_T(v) \cap X}$ is odd. %
  If $x \in X$, we have $N_T(u_2)\cap X = \{x\}$. If $X = \{u_1\}$, we have
  $N_T(x)\cap X = \{u_1\}$. So suppose $x \not \in X$ and $X \neq \{u_1\}$. %
  Let $B_i \coloneqq X\cap V(T_i)$ for $i \in \{1,\ldots, {\ell}\}$. Since $B_i$
  is nonempty for some $i$, and $x \not \in X$, we find
  $v \in V(T_i)\setminus B_i$ such that $\card{N_{T_i}(v)\cap B_i}$ is odd. Now
  $\card{N_{T}(v)\cap X}$ is odd since $x \not \in X$ and $v$ is not adjacent to
  $u_1$. Moreover, $\card{A} = m(T)$.
\end{proof}

\begin{prop}\label{prop:deg2}
  Let $T \coloneqq (V,E)$ be a tree that contains a vertex $y$ of degree
  2 adjacent to a vertex $z$ of degree 1. If $\ind(T-z) = m(T-z)$, then
  $\ind(T) = m(T)$.
\end{prop}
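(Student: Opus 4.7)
The plan is to establish $\ind(T)\geq m(T)$; the reverse inequality is already known for trees via $\ind(G)\leq \ddgeo(G)\leq \ddBool(G)$ combined with $\ddBool(T)\leq m(T)$ noted before Theorem~\ref{thm:trees}. I would proceed in two steps: first show $m(T)\leq m(T-z)+1$, then extend a maximum independent $\pmod 2$ set $A$ of $T-z$ (of size $m(T-z)$ by hypothesis) to a set of size $m(T-z)+1\geq m(T)$ in $T$ by adjoining one carefully chosen vertex.

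Writing $x$ for the other neighbour of $y$ in $T$, the first step goes as follows. Take an optimal star decomposition $\Sigma'$ of $T-z$ and inspect the star $S\in\Sigma'$ containing the edge $xy$. Since $y$ is a leaf of $T-z$, either $S=\{xy\}$ is trivial, in which case I replace it by the nontrivial star $\{xy,yz\}$ centred at $y$, or $S$ is nontrivial (hence centred at $x$), in which case I adjoin $\{yz\}$ as a new trivial star. Either way the cost increases by exactly $1$, yielding a star decomposition of $T$ of cost $m(T-z)+1$.

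For the extension, set
\[
A':=\begin{cases} A\cup\{y\} & \text{if } y\notin A,\\ A\cup\{z\} & \text{if } y\in A,\end{cases}
\]
so $\card{A'}=m(T-z)+1$, and I then verify that $A'$ is independent $\pmod 2$ in $T$ by producing, for each nonempty $X\subseteq A'$, a vertex $v\in V(T)\setminus X$ with $\card{N_T(v)\cap X}$ odd. Two subcases are immediate: if $y\notin A$ and $y\in X$, take $v=z$, since $N_T(z)=\{y\}$; if $y\in A$ and $X=\{z\}$, take $v=y$. In every remaining case $X\cap A$ is a nonempty subset of $A$, and applying the independence of $A$ to $X\cap A$ in $T-z$ produces a witness $v''$. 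When $v''\neq y$ this transfers to $T$ unchanged since $N_T(v'')=N_{T-z}(v'')$; when $v''=y$ it still works provided $z\notin X$, because then $\card{N_T(y)\cap X}=\card{N_{T-z}(y)\cap X}$ is odd.

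The only genuine obstacle is the subcase $y\in A$, $z\in X$, $X_0:=X\setminus\{z\}$ nonempty, $y\notin X_0$, and $x\in X_0$: here the candidate $v=y$ gives the even count $\card{\{x,z\}\cap X}=2$. The plan is to exhibit a witness $v''\neq y$ for $X_0$ in $T-z$, so that $v''$ automatically works for $X$ in $T$. Suppose for contradiction no such $v''$ exists. Applying independence of $A$ to $X_0\cup\{y\}\subseteq A$ yields some $v'\in V(T-z)\setminus(X_0\cup\{y\})$ with $\card{N_{T-z}(v')\cap(X_0\cup\{y\})}$ odd; since $\card{N_{T-z}(v')\cap X_0}$ is even by our supposition, the parity identity $\card{N_{T-z}(v')\cap(X_0\cup\{y\})}=\card{N_{T-z}(v')\cap X_0}+[v'\sim y]$ forces $v'\sim y$, hence $v'=x$ (the unique neighbour of $y$ in $T-z$); but this contradicts $x\in X_0\subseteq X_0\cup\{y\}$. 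This resolves the obstacle and completes the verification.
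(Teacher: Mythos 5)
Your proof is correct and follows essentially the same route as the paper's: the same extension of a maximum independent $\pmod 2$ set of $T-z$ by $y$ or by $z$, the same case analysis, and the same key device of passing to the auxiliary set $(X\setminus\{z\})\cup\{y\}$ in the one delicate subcase (you phrase it as a contradiction, while the paper directly exhibits the witness for that auxiliary set and checks it works for $X$). Your handling of star decompositions is marginally leaner, since you only establish $m(T)\leq m(T-z)+1$, which is all the argument needs, whereas the paper proves the equality $m(T)=m(T-z)+1$.
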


\begin{proof}
  First, we show that $m(T) = m(T-z)+1$. If there is an optimal star
  decomposition of $T-z-y$ in which some vertex $x$ is the center of a star,
  then $m(T-z) = m(T-z-y)$ and $m(T) = m(T-z)+1$, else $m(T-z) = m(T-z-y)+1$ and
  $m(T) = m(T-z-y)+2$.
    
  Now we consider a maximum sized independent set $A^\prime$ in $T-z$. We
  have $\card{A^\prime} = \ind(T-z) = m(T-z)$. We define
  $A \coloneqq A^\prime \cup \{y\}$ if $y\not \in A^\prime$; and
  $A \coloneqq A^\prime \cup \{z\}$ if $y \in A^\prime$. We show that
  $A$ is independent in $T$.

  \
  
  \noindent {\em Case 1: $y \not \in A^\prime$, hence $y \in A$ and
    $z\not \in A$.}  Let $B\subseteq A, B \neq \emptyset$.

  If $y \in B$, then $\card{N_T(z)\cap B}$ is odd.

  If $y \not \in B$, then $B \subseteq A^\prime$, hence there exists
  $v\in V(T-z)$ such that $\card{N_{T-z}(v)\cap B}$ is odd, and
  $\card{N_{T}(v)\cap B}$ is odd.

  \
  
  \noindent {\em Case 2: $y \in A^\prime$, hence $z \in A$.} Let
  $B \subseteq A, B \neq \emptyset$.

  If $z\not \in B$, then $B\subseteq A^\prime$. Find
  $v\in V(T-z)\setminus B$ such that $\card{N_{T-z}(v)\cap B}$ is
  odd. Hence $\card{N_{T}(v)\cap B}$ is odd.

  Now suppose that $z\in B$. If $B = \{z\}$, then $\card{N_T(y)\cap B}$ is
  odd. Otherwise, consider $B \setminus \{z\}$, which is a subset of
  $A^\prime$. Find $v\in V(T-z) \setminus (B \setminus \{z\}) $ such that
  $\card{N_{T-z}(v)\cap (B\setminus \{z\})}$ is odd. If $v \neq y$, then
  $\card{N_{T}(v)\cap B}$ is odd. %
  If $v=y$ and $x \in N_T(y)\setminus \{z\}$, then $\card{N_{T}(v)\cap B}$ is
  even and $x\in B$. In this case, let
  $B^\prime \coloneqq (B\setminus \{z\})\cup \{y\}$. This is a subset of
  $A^\prime$. Find $u \in V(T-z)\setminus B^\prime$ such that
  $\card{N_{T-z}(u)\cap B^\prime}$ is odd. Since $B^\prime$ contains $x$ and
  $y$, we conclude that $u$ is not adjacent to any of $y$ and $z$, hence
  $\card{N_T(u)\cap B}$ is odd.

  Thus we have shown that $A$ is independent. We have
  $\ind(T) \geq \card{A} = \card{A^\prime} + 1 = m(T-z)+1 = m(T)$. Since
  $\ind(T)$ cannot be more than $m(T)$, we have $\ind(T) = m(T)$.
\end{proof}

\begin{proof}[Proof of Theorem~\ref{thm:trees}] If a tree $T$ has two
  vertices, then $\ind(T) = m(T) = 1$.  Each tree with at least $3$
  vertices contains a cherry or a vertex of degree $2$ adjacent to a
  vertex of degree $1$. (This is seen by considering the second-to-last
  vertex of a longest path in $T$.)  Now,  induction on the number of
  vertices, using Propositions~\ref{prop:cherry} and~\ref{prop:deg2},
  implies the result.
\end{proof}

\section{Inversion index of a tournament and Boolean
  dimension}\label{section:inversionindex}

\subsection{Inversion index of a tournament} Let $T$ be a tournament.  Let
$V(T)$ be its vertex set and $A(T)$ be its arc set. An {\it inversion} of an arc
$a := (x,y)\in A(T)$ consists to replace the arc $a$ by $a^{\star} := (y,x)$ in
$A(T)$. For a subset $X \subseteq V(T)$, let $Inv(T, X)$ be the tournament
obtained from $T$ after reversing all arcs $(x,y) \in A(T)\cap (X\times X)$. For
example, $Inv(T, V)$ is $T^*$, the {\it dual} of $T$. For a finite sequence
$(X_{i})_{ i < m}$ of subsets of $V(T)$, let $Inv( T, (X_{i})_{i< m})$ be the
tournament obtained from $T$ by reversing successively all the arcs in each of
the subsets $X_i$, $i<m$, that is, the tournament equal to $T$ if $m=0$ and to
$Inv(Inv(T, (X_{i})_{i< m-1}), X_{m-1})$ if $m\geq 1$.  Said differently, an arc
$(x,y)\in A(T)$ is reversed if and only if the number of indices $i$ such that
$\{x,y\} \subseteq X_i$ is odd. The {\it inversion index} of $T$, denoted by
$i(T)$, is the least integer $m$ such that there is a sequence $(X_{i})_{i< m}$
of subsets of $V(T)$ for which $Inv( T, (X_{i})_{i< m})$ is acyclic.

In the sequel, we consider tournaments for which this index is finite. In full
generality, the inversion index of a tournament $T$ can be defined as the least
cardinal $\kappa$ such the Boolean sum of $T$ and a graph of Boolean dimension
$\kappa$ is acyclic. The case $\kappa$ finite is stated in
Lemma~\ref{lem:inversionboolean} below. We leave tournaments with infinite
inversion index to further studies.

The motivation for the notion of inversion index originates in the study of
critical tournaments. Indeed, the critical tournaments of 
 \citet {ST} can be easily defined from acyclic tournaments by means of one or two
inversions whereas the $(-1)$-critical tournaments, characterized in 
\citet{HIJ2}, can be defined by means of two, three or four inversions 
\citet{belkhechine}. Another interest comes from the point of view of logic.

Results about the inversion index originate in the thesis of 
\citet{belkhechine}. Some results have been announced in \citet{bbbp2010}; they
have been presented at several conferences by the first author and included in a
circulating manuscript \cite{bbbp2012}. The lack of answer for some basic
questions is responsible for the delay of publication.

The inversion index is a variant of the \emph{Slater index}: the least number of
arcs of a tournament which have to be reversed in order to get an acyclic
tournament \citep{SL}. The complexity of the computation of the Slater index
was raised by 
\citet{BT}.  \citet*{alon}  and
independently 
\citet*{charbit} showed 
that the
problem is NP-hard. An extension of the inversion index to oriented graphs is
studied in \citet{bang-jensen-all:hal-02918958}.
\begin{problem}
  Is the computation of the inversion index NP-hard?
\end{problem}

\begin{question}
Are there tournaments of arbitrarily large inversion index?
\end{question}

This last question has a positive answer. There are two reasons, the first one
is counting, the second one, easier, is based on the notion of
well-quasi-ordering.

For $n \in \mathbb{N}$, let $i(n)$ be the maximum of the inversion index of
tournaments on $n$ vertices.  We have $i(n)=0$ for $n\leq 2$, $i(3)=i(4)=1$,
$i(5)=i(6)=2$. For larger $n$ a counting argument \citet{belkhechine, bbbp2010,
  bbbp2012} yields the following result.

\begin{thm} \label{THM 0} $\frac{n-1}{2} - \log_{2} n \leq i(n) \leq n-4$ for
  all integer $n \geq 6$.
\end{thm}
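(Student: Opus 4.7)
The plan is to establish the two inequalities independently.

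For the lower bound $\frac{n-1}{2} - \log_2 n \leq i(n)$, I would use a double-counting argument. Every tournament $T$ on a fixed $n$-element vertex set $V$ with $i(T) \leq m$ can be encoded by a pair consisting of an acyclic tournament on $V$ and a sequence $(X_i)_{i<m}$ of subsets of $V$, via the map $(A,(X_i)_{i<m}) \mapsto Inv(A,(X_i)_{i<m})$. There are $n!$ acyclic tournaments on $V$ and $2^{nm}$ such sequences, while the total number of tournaments on $V$ is $2^{\binom{n}{2}}$; taking $m = i(n)$ gives
\[
2^{\binom{n}{2}} \;\leq\; n! \cdot 2^{n \cdot i(n)}.
\]
Taking logarithms in base $2$, dividing by $n$, and using $\log_2(n!) \leq n \log_2 n$ reads off the desired inequality.

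For the upper bound $i(n) \leq n-4$, I would induct on $n$, with base case $n = 6$ handled by the already-stated fact $i(6) = 2$. The key observation for the inductive step (for $n \geq 7$) is that a single, explicitly chosen inversion always converts any prescribed vertex $v \in V(T)$ into a source: if $A$ denotes the in-neighbourhood of $v$, then in $Inv(T, A \cup \{v\})$ every arc between $v$ and $A$ is reversed and so becomes outgoing from $v$, while the arcs between $v$ and $V \setminus (A \cup \{v\})$ are untouched (and were already outgoing from $v$). Since a source lies on no directed cycle, $Inv(T, A \cup \{v\})$ is acyclic if and only if its restriction to $V \setminus \{v\}$ is. Applying the inductive hypothesis $i(n-1) \leq n-5$ to that restriction yields $n-5$ further inversion sets, which may be chosen as subsets of $V \setminus \{v\}$ and therefore leave the arcs incident to $v$ unchanged, so $v$ remains a source. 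Putting the two parts together gives $i(T) \leq 1 + (n-5) = n-4$.

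The counting half is routine once the encoding is spotted. The step that needs most care is the inductive construction: the inversion must be chosen to \emph{create a source} rather than to directly reduce the number of cycles, so that the problem separates cleanly into one step handling $v$ and an independent recursive step for the remaining $n-1$ vertices, with the latter step's inversion sets automatically disjoint from $v$. Beyond this, I do not foresee any further obstacle.
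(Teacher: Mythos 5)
Your proof is correct. For this theorem the paper itself supplies no argument at all --- it simply says that ``a counting argument'' from the cited references yields the result --- so there is no internal proof to compare against; your write-up fills the gap with what is surely the intended reasoning. The lower bound is exactly the standard count: the map $(A,(X_i)_{i<m})\mapsto Inv(A,(X_i)_{i<m})$ is surjective onto all tournaments on $V$ when $m=i(n)$ (pad shorter witnessing sequences with empty sets, and use that applying the same family twice is the identity since each arc is then covered an even number of times), giving $2^{\binom{n}{2}}\leq n!\cdot 2^{n\,i(n)}$ and hence $i(n)\geq \frac{n-1}{2}-\log_2 n$ via $\log_2 n!\leq n\log_2 n$. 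The upper bound by source-creation is also sound: inverting on $N^-(v)\cup\{v\}$ makes $v$ a source, the remaining inversion sets can be taken inside $V\setminus\{v\}$ so $v$ stays a source, and a tournament with a source is acyclic iff its restriction to the other vertices is, yielding $i(n)\leq i(n-1)+1$. The one point worth flagging is that all of the strength of the bound $n-4$ (as opposed to the trivial $n-2$ one gets by anchoring the same recursion at $n=2$) lives in the base case $i(6)\leq 2$, which you, like the paper, take as given; that is a legitimate dependency here since the paper asserts $i(5)=i(6)=2$ just above the theorem, but it is where the genuine combinatorial work is hidden.
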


It is quite possible that $i(n)\geq \lfloor{\frac{n-1}{2}}\rfloor$, due to the
path of strong connectivity (it is not even known if reverse inequality holds).

The \emph{path of strong connectivity on $n$} vertices is the tournament $T_n$
defined on $ \mathbb {N} _ {<n}:= \{0, \dots, n-1\}$ whose arcs are all pairs
$(i, i+1)$ and $(j, i)$ such that $i+1<n$ and $i<j<n$.
 
 \vspace{-1.7cm}
 
\begin{figure}[htb]
\begin{center}
\includegraphics[width=5cm]{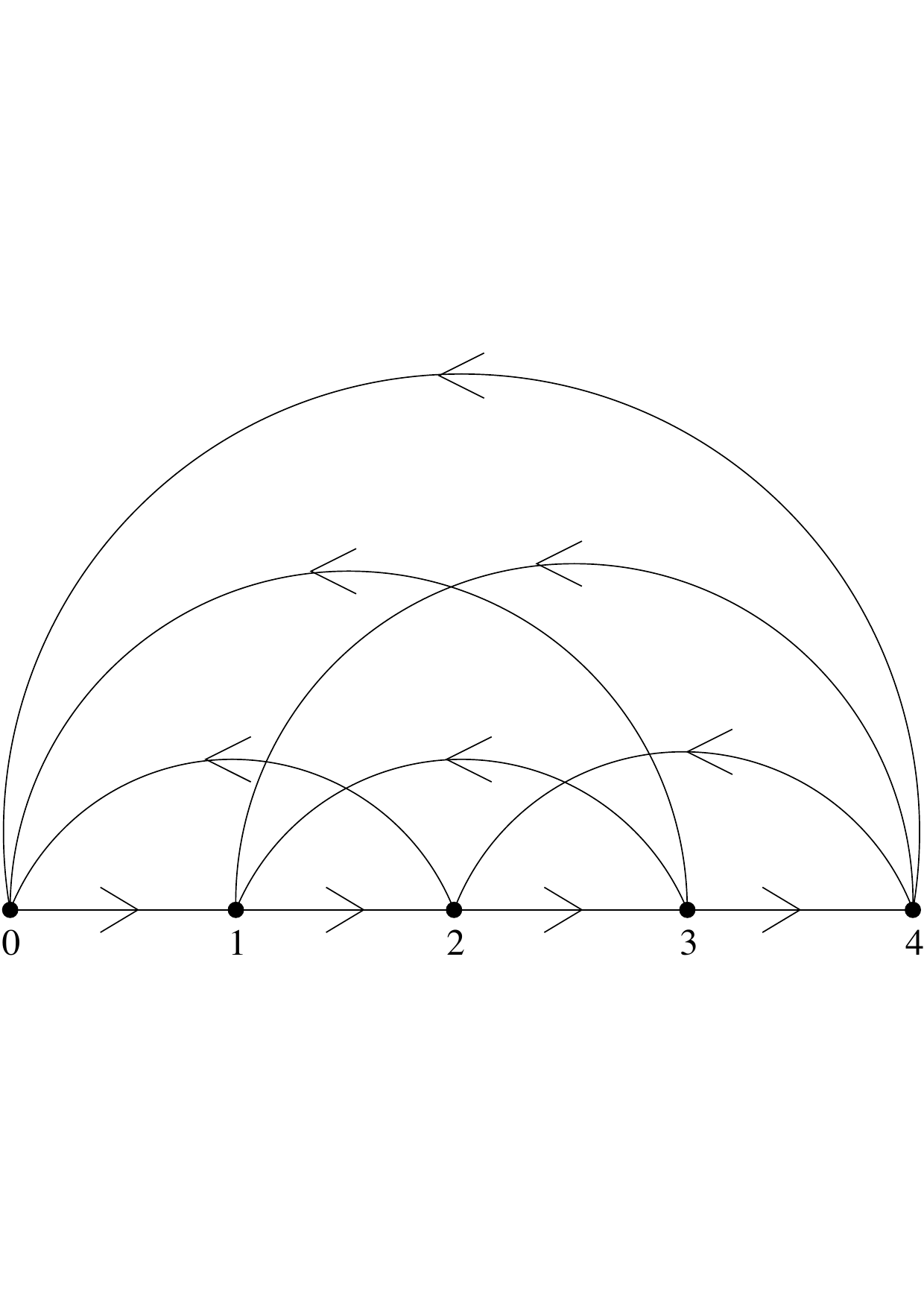}
\end{center}
\vspace{-1.7cm}
\caption{\text{Path of strong connectivity on $5$ vertices}}
\label{Pathtournament5}
\end{figure}

\begin{question}
  Is the inversion index of a path of strong connectivity on $n$ vertices equal
  to $\lfloor{\frac{n-1}{2}}\rfloor$?
\end{question}

\subsection{Well-quasi-ordering} Basic notions of the theory of relations apply
to the study of the inversion index. These notions include the quasi order of
embeddability, the hereditary classes and their bounds, and the notion of
well-quasi-order. For those, we refer to 
\citet{fraisse}.

Let $\mathcal {I}_{m}^{< \omega}$ be the class of finite tournaments $T$ whose
inversion index is at most $m$. This is a hereditary class in the sense that if
$T\in \mathcal {I}_{m}^{< \omega}$ and $T'$ is embeddable into $T$ then
$T'\in \mathcal {I}_{m}^{< \omega}$. It can be characterized by obstructions or
bounds.  A \emph{bound} is a tournament not in $\mathcal {I}_{m}^{< \omega}$
such that all proper subtournaments are in $\mathcal {I}_{m}^{< \omega}$. We may
note that the inversion index of every bound of $\mathcal {I}_{m}^{< \omega}$ is
at least $m+1$. Hence, the fact that $\mathcal {I}_{m}^{< \omega}$ is distinct
of the class of all finite tournaments provides tournaments of inversion index
larger than $m$. This fact relies on the notion of well-quasi-ordering.

A poset $P$ is \emph{well-quasi-ordered} if every sequence of elements of $P$
contains an increasing subsequence.
\begin{thm}\label{latka} 
The class of all finite tournaments is not well-quasi-ordered by embeddability.
\end{thm}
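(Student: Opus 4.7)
The plan is to prove Theorem~\ref{latka} by exhibiting an explicit infinite antichain of finite tournaments under the embeddability quasi-order. Since a quasi-ordered set is well-quasi-ordered if and only if it contains neither an infinite antichain nor an infinite strictly descending chain, and the latter is impossible here (the number of vertices is a well-founded lower bound on $|V|$), it suffices to produce an infinite family $(T_n)_{n\in I}$ of pairwise non-embeddable finite tournaments.

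Following the approach of Latka cited in the theorem, I would build tournaments whose structure rigidly encodes a numerical parameter. Concretely, for each $n$ in an infinite index set $I \subseteq \NN$ (I expect it to be advantageous to take $I$ to consist of pairwise coprime odd integers, e.g.\ the odd primes), I would let $T_n$ be a tournament on roughly $n$ vertices obtained by equipping a highly symmetric skeleton---such as a circulant tournament on $\ZZ/n\ZZ$ with arcs $(i,j)$ whenever $j-i \in \{1,\dots,(n-1)/2\} \pmod n$---with a small asymmetric \emph{gadget}: a fixed rigid subtournament attached to a distinguished vertex via a prescribed, non-self-symmetric pattern of arcs. The role of the gadget is to destroy the vertex-transitivity of the circulant and pin down a base point from which one can read off the cyclic structure.

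With this family in hand, I would prove a rigidity lemma stating that in each $T_n$ the gadget appears as an induced subtournament in exactly one position. Any embedding $f\colon T_n \hookrightarrow T_m$ is therefore forced to send the unique copy of the gadget in $T_n$ onto the unique copy in $T_m$. This pins down a base vertex and reduces $f$, on the cyclic skeleton, to an arithmetic map $\ZZ/n\ZZ \to \ZZ/m\ZZ$ that preserves the circulant pattern. A short number-theoretic argument then shows no such map can exist when $n$ and $m$ are distinct elements of a coprime index set $I$, so $(T_n)_{n\in I}$ is an infinite antichain and the theorem follows.

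The main obstacle is the rigidity lemma: one must design the gadget carefully enough that, even inside the larger and highly symmetric host tournaments, no unintended copy of it arises. The two competing demands---enough symmetry in the skeleton to make the family natural, and enough asymmetry in the gadget to break all unwanted embeddings---are the technical core of the argument. Once rigidity is established, the non-embeddability reduces to the arithmetic remark that a nontrivial circulant embedding $\ZZ/n\ZZ \hookrightarrow \ZZ/m\ZZ$ preserving a distinguished generating difference set forces $n \mid m$, which fails within any coprime $I$.
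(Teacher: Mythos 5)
Your overall strategy---reduce well-quasi-ordering to the existence of an infinite antichain and then exhibit one explicitly---is the right one and is what the paper does. But as written your argument has two genuine gaps. First, the construction is not actually carried out: the gadget is never specified, and the rigidity lemma (that the gadget occurs in exactly one position inside the highly symmetric circulant host) is only announced, with you yourself identifying it as ``the technical core.'' Until a concrete gadget is produced and its uniqueness inside $T_n$ is verified, no antichain has been exhibited. Second, and more seriously, even granting rigidity, the step where you reduce an embedding $f\colon T_n\hookrightarrow T_m$ on the skeleton to ``an arithmetic map $\ZZ/n\ZZ\to\ZZ/m\ZZ$'' is unjustified: a tournament embedding is merely an arc-preserving injection and carries no reason to respect the group structure of the circulant, so the concluding number-theoretic remark ($n\mid m$, hence coprime indices give an antichain) does not apply. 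These circulants with difference set $\{1,\dots,(n-1)/2\}$ are in fact rich in induced subtournaments, so non-embeddability cannot be taken for granted.

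For comparison, the paper's antichain is far more economical and avoids both issues. It takes the path of strong connectivity $T_n$ on $\{0,\dots,n-1\}$ and reverses the single arc $(n-1,0)$ to obtain $C_n$; to $C_n$ it associates the $3$-uniform hypergraph $H_n$ of its $3$-cycles. A simple count shows the vertices $0$ and $n-1$ lie in exactly $n-2$ hyperedges, the vertices $1$ and $n-2$ in exactly two, and all others in three, so any embedding $C_n\hookrightarrow C_m$ must send $\{0,n-1\}$ onto $\{0,m-1\}$; preservation of the arc $(0,n-1)$ then forces $h(0)=0$ and $h(n-1)=m-1$, and preservation of the arcs $(i,i+1)$ gives a contradiction when $n<m$. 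The ``rigidity'' is thus obtained for free from a degree count in the derived hypergraph rather than from a bespoke gadget, and no arithmetic structure on the embedding is ever needed. I would encourage you either to adopt a construction of this kind or, if you wish to keep the circulant family, to supply the gadget, prove its uniqueness, and replace the arithmetic step by a direct combinatorial argument.
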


This is a well known fact. As indicated by a referee, it has been mentioned by
several authors. See e.g., 
\cite{latka} for a much stronger version of
Theorem~\ref{latka} and also subsection 3.1 of \citet{cherlin}. For the
convenience of the reader we give a proof.

\begin{proof} Let $T_n$ be the path of strong connectivity on
  $\{0, \dots, n-1\}$ as defined above. Let $C_n$ be the tournament obtained
  from $T_n$ by reversing the arc $(n-1, 0)$. We claim that for $n\geq 7$, the
  $C_n$'s form an antichain. Indeed, to $C_n$ we may associate the $3$-uniform
  hypergraph $H_n$ on $\{0, \dots, n-1\}$ whose $3$-element hyperedges are the
  $3$-element cycles of $C_n$.  An embedding from some $C_n$ to another $C_m$,
  $m > n$, induces an embedding from $H_n$ to $H_m$. To see that such an
  embedding cannot exist, observe first that the vertices $0$ and $n-1$ belong
  to exactly $n-3$ hyperedges, and the vertices $1$ and $n-2$ belong to exactly
  two hyperedges, the other vertices to three hyperedges, hence an embedding $h$
  will send $\{0, n-1\}$ on $\{0, m-1\}$.  The preservation of the arc
  $(0,n-1)$ imposes $h(0)=0$ and $h(n-1)= m-1$. Then, the preservation of the
  arcs $(i, i+1)$ yields a contradiction since $n<m$.
\end{proof}

\begin{thm}\citet{bbbp2010}\label{thm:wqo}
  For each $m\in \NN$, the class $\mathcal {I}_{m}^{< \omega}$ is well-quasi-ordered.
\end{thm}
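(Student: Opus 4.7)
The plan is to encode each tournament in $\mathcal{I}_m^{<\omega}$ as a finite word over the finite alphabet $\Sigma := \{0,1\}^m$, and then to deduce well-quasi-ordering under tournament embeddability from Higman's lemma applied to the subword relation on $\Sigma^*$.

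For each $T \in \mathcal{I}_m^{<\omega}$, I would fix a sequence $(X_1, \ldots, X_m)$ of subsets of $V(T)$ (padding with empty sets if $i(T) < m$) such that $L := Inv(T, (X_i)_{i < m})$ is acyclic; then $L$ is a linear order $<_L$ on $V(T)$. Define the labeling $\lambda \colon V(T) \to \Sigma$ by $\lambda(v) := (\mathbf{1}_{X_1}(v), \ldots, \mathbf{1}_{X_m}(v))$, and let $w_T \in \Sigma^*$ be the word obtained by reading off $\lambda(v)$ as $v$ runs through $V(T)$ in $<_L$-order. The key observation is that the arcs of $T$ are recoverable from the pair $(<_L, \lambda)$: for $x <_L y$, the arc of $T$ is $(x,y)$ iff $\card{\{i : \lambda(x)_i = \lambda(y)_i = 1\}}$ is even, and is $(y,x)$ otherwise. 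This holds because successive inversions toggle each arc once for every $i$ such that both endpoints lie in $X_i$; in particular, inversions by different sets commute in their action on the arc set.

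Given an arbitrary infinite sequence $(T_n)_{n \in \NN}$ in $\mathcal{I}_m^{<\omega}$, the corresponding sequence of words $(w_{T_n})_{n \in \NN}$ lives in $\Sigma^*$. Since $\Sigma$ is finite and therefore trivially well-quasi-ordered, Higman's lemma supplies indices $i < j$ such that $w_{T_i}$ embeds into $w_{T_j}$ as a subword; that is, there is a strictly increasing injection $f \colon V(T_i) \to V(T_j)$ with $\lambda_{T_j}(f(v)) = \lambda_{T_i}(v)$ for all $v \in V(T_i)$. The same $f$ is then an embedding of tournaments $T_i \hookrightarrow T_j$: since $f$ preserves both the chosen linear orders and the labels, it preserves the parity condition that determines each arc, so arcs of $T_i$ are mapped to arcs of $T_j$ with the correct orientation.

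The only delicate step is verifying the parity characterization of arcs in terms of $(<_L, \lambda)$, which is a direct bookkeeping argument using the commutativity and involutive nature of individual inversions. Once this is in place, the result is a straightforward application of Higman's lemma to a finite alphabet, so I do not anticipate any serious obstacle beyond choosing one representation $(X_1,\ldots,X_m)$ per tournament (so that the encoding $T \mapsto w_T$ is well defined on the class $\mathcal{I}_m^{<\omega}$).
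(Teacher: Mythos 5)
Your proposal is correct and follows essentially the same route as the paper: the paper codes each tournament in $\mathcal{I}_m^{<\omega}$ as a finite linear order with $m$ unary predicates, i.e., a word over an alphabet of size $2^m$, applies Higman's theorem, and transfers the well-quasi-order through the embeddability-preserving map sending $(L,U_1,\dots,U_m)$ to the corresponding Boolean sum. Your parity characterization of the arcs and the choice of one representation per tournament are just the explicit version of that same encoding, so there is nothing to add.
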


\begin{proof}
  The class $\mathcal {L}_{m}^{< \omega}$ made of a finite linear order $L$ with
  $m$ unary predicates $U_1, \dots, U_{m}$ (alias $m$ distinguished subsets) and
  ordered by embeddability is well-quasi-ordered. This is a straightforward
  consequence of Higman's theorem on words 
\citep[see][]{higman1952}, in fact, an
  equivalent statement.  Higman's result asserts that the collection of words
  on a finite alphabet, ordered by the subword ordering, is well-quasi-ordered. Since members of $\mathcal {L}_{m}^{< \omega}$ can be coded by words
  on an alphabet with $2^{m}$ elements, the class $\mathcal {L}_{m}^{< \omega}$
  is well-quasi-ordered. The map associating to each $(L, U_1, \dots, U_{m} )$
  the Boolean sum $L\dot {+} U_1\dots \dot {+} U_{m}$ preserves the
  embeddability relation, hence the range of that map is well-quasi-ordered.
  This range being equal to $\mathcal {I}_{m}^{< \omega}$, this later class is
  well-quasi-ordered.\end{proof}

\begin{cor}There are finite tournaments with arbitrarily large inversion index.  
\end{cor}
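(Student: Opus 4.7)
The plan is to derive the corollary as a direct contradiction between Theorem~\ref{thm:wqo} and Theorem~\ref{latka}. Suppose, for contradiction, that the inversion index is bounded on finite tournaments, i.e., there exists an $m \in \NN$ such that every finite tournament $T$ satisfies $i(T) \leq m$. Then the class of all finite tournaments coincides with $\mathcal{I}_m^{<\omega}$.

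By Theorem~\ref{thm:wqo}, $\mathcal{I}_m^{<\omega}$ is well-quasi-ordered by embedability. Hence, under our assumption, the class of all finite tournaments would be well-quasi-ordered by embedability. But Theorem~\ref{latka} asserts the opposite: the class of all finite tournaments is not well-quasi-ordered (the sequence $(C_n)_{n\geq 7}$ constructed from paths of strong connectivity with a reversed arc forms an infinite antichain). This contradiction shows that no such $m$ exists, so for every $m \in \NN$ there is a finite tournament $T$ with $i(T) > m$.

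The proof is really just the observation that a union of a well-quasi-ordered chain of classes cannot itself be well-quasi-ordered unless it stabilizes; here $\mathcal{I}_m^{<\omega} \subseteq \mathcal{I}_{m+1}^{<\omega}$ for all $m$, the union is the class of all finite tournaments, and the union is not well-quasi-ordered, so no single $\mathcal{I}_m^{<\omega}$ can equal the union. There is no real obstacle: the two ingredients are already available in the text, so the argument is just the contrapositive packaging of Theorem~\ref{thm:wqo} against Theorem~\ref{latka}.
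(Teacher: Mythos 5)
Your argument is correct and is exactly the one the paper intends: the corollary is stated without proof precisely because it follows by contrasting Theorem~\ref{thm:wqo} (each $\mathcal{I}_{m}^{<\omega}$ is well-quasi-ordered) with Theorem~\ref{latka} (the class of all finite tournaments is not), so each $\mathcal{I}_{m}^{<\omega}$ is a proper subclass and tournaments of inversion index greater than $m$ exist. This matches the paper's own remark that the non-well-quasi-ordering of finite tournaments is the ``easier'' of the two reasons (the other being the counting bound of Theorem~\ref{THM 0}).
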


We have the following result concerning the bounds.

\begin{thm}\citet{bbbp2010}
The  class $\mathcal {I}_{m}^{< \omega}$ has only finitely many bounds. 
\end{thm}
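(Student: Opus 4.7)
The plan is to argue by contradiction: assume there is an infinite sequence $(B_n)_{n\in \NN}$ of pairwise non-isomorphic bounds of $\mathcal {I}_m^{<\omega}$, and derive a contradiction using a labeled refinement of Theorem~\ref{thm:wqo}. For each $n$, I pick an arbitrary vertex $v_n \in V(B_n)$; by definition of a bound, $B_n - v_n \in \mathcal {I}_m^{<\omega}$. I enrich it with two distinguished subsets, the set $P_n$ of out-neighbours of $v_n$ in $B_n$ and the set $Q_n$ of in-neighbours of $v_n$ in $B_n$, to obtain a labeled tournament $(B_n - v_n, P_n, Q_n)$. The labels record the arcs of $B_n$ incident to $v_n$, so the triple together with the extra vertex determines $B_n$ completely.

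The key step is to upgrade Theorem~\ref{thm:wqo} to the labeled setting: the class of triples $(T, P, Q)$ with $T \in \mathcal {I}_m^{<\omega}$ and $P, Q \subseteq V(T)$, ordered by label-preserving embedding, is well-quasi-ordered. I would prove this by the same method as Theorem~\ref{thm:wqo}. Every such triple arises as the image of a finite linear order equipped with $m+2$ unary predicates (the first $m$ encoding a family of inversion sets that reduce $T$ to a linear order, the last two being $P$ and $Q$) under the map $(L, U_1, \dots, U_m, P, Q) \mapsto (L \dot{+} U_1 \dot{+} \cdots \dot{+} U_m, P, Q)$. Such enriched linear orders are coded by words over an alphabet of size $2^{m+2}$, hence they form a well-quasi-ordered class by Higman's theorem; the map above preserves label-preserving embeddability and is surjective onto the class of triples, so this class is well-quasi-ordered as well.

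Applying the labeled wqo to the sequence $(B_n - v_n, P_n, Q_n)_{n\in \NN}$ yields indices $i < j$ and a label-preserving embedding $f\colon (B_i - v_i, P_i, Q_i) \hookrightarrow (B_j - v_j, P_j, Q_j)$. I extend $f$ to a map $\tilde f\colon V(B_i) \to V(B_j)$ by setting $\tilde f(v_i) := v_j$. The inclusions $f(P_i) \subseteq P_j$ and $f(Q_i) \subseteq Q_j$ say exactly that for every $u \in V(B_i) \setminus \{v_i\}$, the arc of $B_i$ between $v_i$ and $u$ has the same orientation as the arc of $B_j$ between $v_j$ and $\tilde f(u)$; hence $\tilde f$ is an embedding of $B_i$ into $B_j$. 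Since the $B_n$ are pairwise non-isomorphic, this embedding is proper, exhibiting $B_i$ as a proper subtournament of $B_j$. But then $B_j$ being a bound forces $B_i \in \mathcal {I}_m^{<\omega}$, contradicting the assumption that $B_i$ is itself a bound. The only delicate ingredient is the labeled well-quasi-ordering, which requires no new idea beyond Theorem~\ref{thm:wqo} applied with $m+2$ predicates instead of $m$.
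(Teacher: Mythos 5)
Your proof is correct and follows essentially the same route as the paper: establish a labeled refinement of Theorem~\ref{thm:wqo} and deduce the finiteness of the bounds by deleting a vertex from each putative bound and recording its neighbourhood with unary predicates. The paper adds a single predicate (in a tournament the in- and out-neighbourhoods of a vertex are complementary, so your $Q_n$ is redundant) and delegates the final deduction to an adaptation of Proposition 2.2 of \cite{pouzet2}, which you instead prove directly; the content is the same.
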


\begin{proof} 
  From the proof of Theorem~\ref{thm:wqo}, the class
  $\mathcal {I}_{m, 1}^{< \omega}$ made of tournaments of
  $\mathcal {I}_{m}^{<\omega}$, with one unary predicate added, is
  well-quasi-ordered. According to an adaptation of Proposition 2.2 of \citet
  {pouzet2} translated in this case, $\mathcal {I}_{m}^{< \omega}$ has finitely
  many bounds.
\end{proof}

We thank the referee for observing that the well-quasi-ordering of
$\mathcal {I}_{m, 1}^{< \omega}$ suffices to yield the finiteness of the bounds
of $\mathcal {I}_{m}^{< \omega}$.

\begin{question} What is the maximum of the cardinality of bounds of
  $\mathcal {I}_{m}^{< \omega}$?
\end{question}

\begin{rem} It must be observed that the collection of graphs with geometric
  dimension at most $m$ over a fixed finite field has finitely many bounds and
  an upper bound on their cardinality is given in \citet{ding-kotlov}. How the
  cardinality of these bounds relate to the cardinality of bounds of
  $\mathcal {I}_{m}^{< \omega}$ is not known.
\end{rem}

\subsection{Boolean dimension and concrete examples of tournaments with large
  inversion index}

Let $C_3.{\underline n}$ be the sum of copies of the $3$-cycle $C_3$ indexed by
the $n$-element acyclic tournament
$\underline n:=(\{0, \dots , n-1\},\{(i,j)\mid 0 \leq i < j \leq n-1\})$ with
$0<\cdots <n-1$.

\begin{thm}\label{thm:threecycle}
  The inversion index of the sum $C_3.{\underline n}$ of $3$-cycles over an
  $n$-element acyclic tournament is $n$.
\end{thm}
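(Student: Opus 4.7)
The plan is to establish matching upper and lower bounds for $i(T)$, where $T := C_3\cdot\underline{n}$. The upper bound $i(T)\leq n$ is immediate: inverting any two-element subset of each block $B_i$ reverses one arc of that block's $3$-cycle; these $n$ inversions jointly make every block transitive while leaving the inter-block arcs (already forming a linear order) intact, so the result is acyclic.

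For the lower bound $i(T)\geq n$, I invoke Lemma~\ref{lem:inversionboolean} to rephrase the claim as $\ddBool(G)\geq n$ whenever $T\dot{+} G$ is acyclic. By Corollary~\ref{cor:independent} together with $\ddgeo(G)\leq\ddBool(G)$ (which follows from Theorem~\ref{thm:threedimension}), it suffices to exhibit an independent-mod-$2$ subset of $V(T)$ of size $n$ in such a $G$. Let $\pi$ be the linear order on $V(T)$ for which $T\dot{+} G$ is acyclic, and write $\tau(x)$ for the $\pi$-position of $x$. For each block $B_i$, let $v_i$ be the $\pi$-least vertex of $B_i$, and let $n_i$ and $m_i$ be the predecessor and successor of $v_i$ in the $3$-cycle of $B_i$ (so $n_i\to v_i$ and $v_i\to m_i$ are arcs of $T$). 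Since $v_i$ is $\pi$-least in $B_i$, the arc $n_i\to v_i$ is reversed in $T\dot{+} G$ while $v_i\to m_i$ is not, so $\{n_i,v_i\}\in E(G)$ and $\{v_i,m_i\}\notin E(G)$.

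Set $A := \{v_0,\dots,v_{n-1}\}$ and, for a nonempty $S\subseteq \{0,\dots,n-1\}$, $A_S := \{v_j:j\in S\}$. To check $A$ is independent mod $2$, let $i^*\in S$ be the index maximizing $\tau(v_{i^*})$. Both $n_{i^*}$ and $m_{i^*}$ lie in $B_{i^*}\setminus\{v_{i^*}\}$, hence outside $A_S$, and satisfy $\tau(n_{i^*}),\tau(m_{i^*})>\tau(v_{i^*})\geq\tau(v_j)$ for every $j\in S$. Since every inter-block arc of $T$ goes from the lower- to the higher-indexed block, for $w\in\{n_{i^*},m_{i^*}\}$ and $j\in S\setminus\{i^*\}$ the pair $\{w,v_j\}$ is an edge of $G$ if and only if $j>i^*$: when $i^*<j$ the arc $w\to v_j$ of $T$ is reversed by $\pi$ (because $\tau(v_j)<\tau(w)$), while when $i^*>j$ the arc $v_j\to w$ already agrees with $\pi$. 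Writing $k := |\{j\in S:j>i^*\}|$ and using the intra-block facts $n_{i^*}\sim v_{i^*}$ and $m_{i^*}\not\sim v_{i^*}$, one obtains $|N_G(n_{i^*})\cap A_S| = k+1$ and $|N_G(m_{i^*})\cap A_S| = k$; exactly one of these is odd, so one of $n_{i^*},m_{i^*}$ is the required witness outside $A_S$. The main obstacle is that when $\pi$ interleaves the blocks, $G$ can carry many inter-block edges; choosing $i^*$ as the $\pi$-latest index in $S$ is precisely what reduces the relevant inter-block adjacencies to a simple function of the block indices, making the parity argument go through.
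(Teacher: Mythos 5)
Your proof is correct and is essentially the paper's own argument: both establish the upper bound by fixing each $3$-cycle separately, and for the lower bound both select in each block the vertex that is least in the acyclic order, take the one that is largest (in that order) over the chosen index set, and use its two neighbours in the $3$-cycle as the parity witnesses, exactly one of which works. The only cosmetic difference is that you package the conclusion via $\ind(G)\le\ddgeo(G)\le\ddBool(G)$ (Corollary~\ref{cor:independent}), whereas the paper unwinds that corollary and directly verifies that the images $f(x_i)$ of these vertices are linearly independent in $\mathbb{F}_2^{r}$.
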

\begin{figure}[htb]
\begin{center}
\includegraphics[width=6cm]{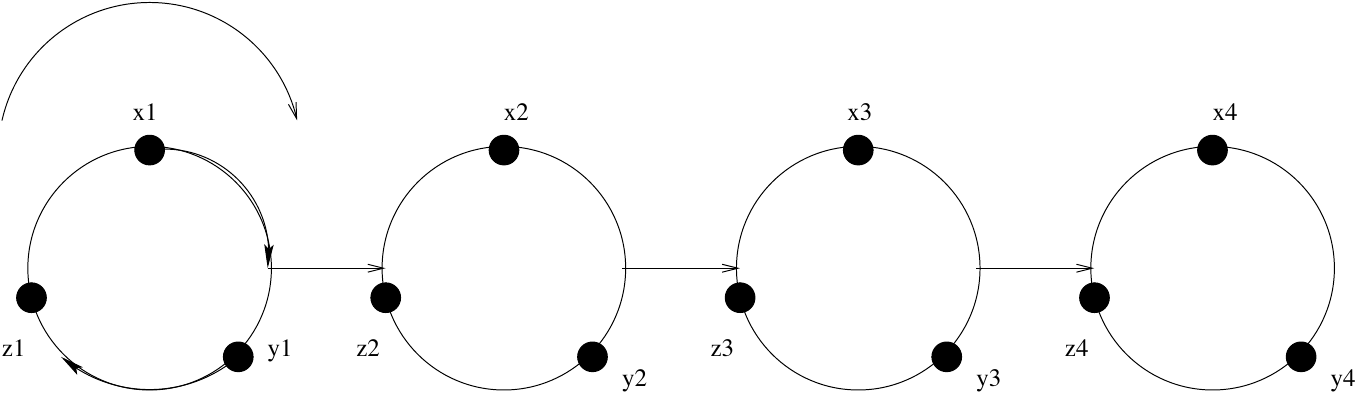}
\end{center}
\caption{\text{$C_3.{\underline 4}$}}
\label{Pathtournament5}
\end{figure}

No elementary proof is known.  The proof we present relies on the notion of
Boolean sum of graphs.

According to the definition of Boolean sum, we have the following result
immediately.

%
 
 \begin{lem}\label{lem:inversionboolean}
   The inversion index of a tournament $T$ is equal to the least integer $k$
   such that the Boolean sum $T\dot{+} G$ of $T$ with a graph $G$ of Boolean
   dimension $k$ is an acyclic tournament.
 \end{lem}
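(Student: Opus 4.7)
The plan is to set up a direct bijection between inversion sequences and Boolean sums of cliques, and then read off both inequalities from this bijection. There is no genuine obstacle; the statement is essentially a translation of one formalism into another, but one has to be careful to identify the tournament $T\dot{+}G$ explicitly.

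First I would record the key identification: if $(X_i)_{i<m}$ is a finite sequence of subsets of $V(T)$ and we set $G:=\dot{+}_{i<m}K^{V}_{X_i}$, then for any two distinct vertices $x,y$ the pair $\{x,y\}$ is an edge of $G$ precisely when the number of indices $i$ with $\{x,y\}\subseteq X_i$ is odd. By the very definition of $Inv(T,(X_i)_{i<m})$ recalled in Section~\ref{section:inversionindex}, this is exactly the condition under which the arc of $T$ between $x$ and $y$ gets reversed. Hence, interpreting $T\dot{+}G$ as the tournament obtained from $T$ by reversing the arc between $x$ and $y$ for every edge $\{x,y\}$ of $G$, we have the identity
\[
T\dot{+}G \;=\; Inv(T,(X_i)_{i<m}).
\]

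For the inequality $(\leq)$, suppose $i(T)=m$, and pick a sequence $(X_i)_{i<m}$ such that $Inv(T,(X_i)_{i<m})$ is acyclic. Setting $G:=\dot{+}_{i<m}K^{V}_{X_i}$ gives a graph with $\ddBool(G)\leq m$ for which $T\dot{+}G$ is acyclic, so the least integer $k$ described in the statement satisfies $k\leq i(T)$.

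For the reverse inequality $(\geq)$, suppose $G$ is a graph with $\ddBool(G)=k$ finite such that $T\dot{+}G$ is an acyclic tournament. By the definition of Boolean dimension we may write $G=\dot{+}_{i<k}K^{V}_{C_i}$ for some subsets $C_i\subseteq V(T)$. Then by the identification above, $Inv(T,(C_i)_{i<k})=T\dot{+}G$ is acyclic, so $i(T)\leq k$. Taking the minimum over all such $G$ gives $i(T)\leq k$, completing the proof.
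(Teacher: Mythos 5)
Your proposal is correct and matches the paper's intent: the paper states this lemma without proof, declaring it immediate from the definition of Boolean sum, and the identification $T\dot{+}G = Inv(T,(X_i)_{i<m})$ that you spell out is precisely the observation being taken for granted. Both inequalities follow from it exactly as you argue.
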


\noindent {\bf Proof of  Theorem~\ref{thm:threecycle}.}
Let $T:=C_3.{\underline n}$, $V:=V(T)$ and $r:=i(T)$. Clearly $r\leq n$.
Conversely, let $H$ be a graph with vertex set $V$ such that $L:=T\dot{+}H$ is
an acyclic tournament and $\ddBool(H)=r$. Let $U:=(\mathbb F_2)^r$ equipped with
the ordinary scalar product $\vert$ and $f: V \rightarrow U$ be a representation
of $H$.

\begin{claim} For each $i\in \{0, \dots, n-1\}$, we may enumerate the vertices
  of $\{0,1,2\}\times \{i\}$ into $x_i,y_i, z_i$ in such a way that
  $(x_i,y_i),(y_i,z_i), (z_i,x_i)$ are arcs of $T$, $(f(x_i) \vert f(z_i))=1$
  and $(f(x_i) \vert f(y_i))=0$.
\end{claim}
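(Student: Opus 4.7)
The plan is to argue one $3$-cycle at a time. Fix $i$ and label the three vertices as $v_0, v_1, v_2$ so that $T$ has arcs $v_0 \to v_1 \to v_2 \to v_0$, and encode the restriction of $H$ to this triangle by the triple $(e_0, e_1, e_2) \in \{0,1\}^3$, where $e_j = 1$ iff $\{v_j, v_{j+1}\} \in E(H)$ (indices modulo $3$).

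First I would translate the global acyclicity of $L := T \dot{+} H$ into a local constraint on this triple. The restriction of $L$ to $\{v_0, v_1, v_2\}$ is obtained from the $T$-cycle by reversing precisely those arcs $(v_j, v_{j+1})$ with $e_j = 1$. An elementary three-vertex case check shows that the resulting subtournament is cyclic exactly when the number of reversed arcs is $0$ or $3$, i.e.\ when $(e_0, e_1, e_2)$ is constant. Since $L$ is acyclic, the triple $(e_0, e_1, e_2)$ must be non-constant.

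Next I would enumerate the three cyclic labellings compatible with the $T$-orientation: for $j \in \{0,1,2\}$ put $(x_i, y_i, z_i) := (v_j, v_{j+1}, v_{j+2})$. Under this labelling $\{x_i, y_i\}$ corresponds to $e_j$ and $\{x_i, z_i\} = \{v_{j+2}, v_j\}$ to $e_{j+2}$, so the desired conditions ``$\{x_i, z_i\} \in E(H)$ and $\{x_i, y_i\} \notin E(H)$'' become ``$e_{j+2} = 1$ and $e_j = 0$''. This is exactly the statement that $(e_0, e_1, e_2)$ contains a cyclic $1\to 0$ transition, which any non-constant binary sequence of length $3$ does. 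Any witnessing $j$ yields a valid labelling.

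Finally, since $f \colon V \to \mathbb{F}_2^r$ is a Boolean representation of $H$ with respect to the ordinary scalar product, $(f(u) \mid f(v)) = 1$ iff $\{u, v\} \in E(H)$, so the labelling just produced satisfies the two required identities involving $f$. The only routine verification needed is the three-vertex case check for (a)cyclicity under arc reversals; there is no genuine obstacle.
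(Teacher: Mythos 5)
Your argument is correct: the local translation of acyclicity of $L=T\dot{+}H$ into the statement that the edge-indicator triple $(e_0,e_1,e_2)$ of $H$ on each $3$-cycle is non-constant, followed by picking a cyclic position where a $1$ is followed by a $0$, does produce a labelling $(x_i,y_i,z_i)$ with the required orientation and with $\{x_i,z_i\}\in E(H)$, $\{x_i,y_i\}\notin E(H)$, which under the representation $f$ gives exactly $(f(x_i)\mid f(z_i))=1$ and $(f(x_i)\mid f(y_i))=0$. The paper states this claim without any proof at all, so there is nothing to compare against; your case check (cyclic iff $0$ or $3$ arcs reversed, and every non-constant cyclic binary word of length $3$ has a $1\to 0$ transition) is evidently the justification the authors had in mind and fills the omission cleanly.
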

\begin{claim}\label{claim:linearlyindt} The set $\{f(x_i): i<n\}$ is linearly
  independent in $U$.
\end{claim}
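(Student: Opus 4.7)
My plan is to invoke the lemma preceding Corollary~\ref{cor:independent}: it suffices to show that $A := \{x_i : i < n\}$ is independent $\pmod{2}$ in $H$, meaning that for every nonempty $S \subseteq \{0,\ldots,n-1\}$, setting $X := \{x_i : i \in S\}$, there exists $v \in V \setminus X$ with $|N_H(v) \cap X|$ odd. Linear independence of $\{f(x_i) : i < n\}$ in $U$ will then follow directly.

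The key step is selecting the witness $v$. The defining conditions of the preceding Claim translate, via the representation $f$, into $x_i \to y_i$ and $x_i \to z_i$ in $L$ for every $i$; so $x_i$ is the source of $L$ on copy $i$. Fixing any topological linear order $\pi$ of the acyclic tournament $L$, this gives $\pi(x_i) < \pi(y_i)$ and $\pi(x_i) < \pi(z_i)$. Given $S$, I would pick $\ell \in S$ maximizing $\pi(x_\ell)$ --- crucially, $\ell$ is chosen by $L$-position, not by index. Then for every $j \in S$ with $j \neq \ell$ one has $\pi(x_j) < \pi(x_\ell) < \pi(y_\ell), \pi(z_\ell)$, so $x_j \to y_\ell$ and $x_j \to z_\ell$ in $L$. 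In $T = C_3 \cdot \underline{n}$ the corresponding cross-copy arcs run from the lower to the higher copy, so each is reversed in $H$ exactly when $j > \ell$; hence $(f(x_j)\vert f(y_\ell)) = (f(x_j)\vert f(z_\ell)) = [j > \ell]$ for every such $j$.

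Combining with $(f(x_\ell)\vert f(y_\ell)) = 0$ and $(f(x_\ell)\vert f(z_\ell)) = 1$ from the preceding Claim and summing over $j \in S$ yields
\[
|N_H(y_\ell) \cap X| = |\{j \in S : j > \ell\}|, \qquad |N_H(z_\ell) \cap X| = 1 + |\{j \in S : j > \ell\}|,
\]
two integers differing by exactly $1$ and therefore of opposite parities. Exactly one of $y_\ell, z_\ell$ (both clearly in $V \setminus X$) thus serves as the required witness $v$, finishing the verification.

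I expect the main obstacle to be spotting the correct choice of $\ell$: the $L$-latest $x$-vertex of $S$, rather than the largest index of $S$. That choice is precisely what forces $y_\ell, z_\ell$ to follow every other $x_j$ in $L$, thereby reducing the $H$-adjacency of $x_j$ with $y_\ell$ or $z_\ell$ to a trivial comparison of copy indices.
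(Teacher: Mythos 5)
Your proof is correct and follows essentially the same route as the paper's: you pick the same witness index (the $L$-maximum $x$-vertex of the chosen subset), establish the same key fact that $y_\ell$ and $z_\ell$ have identical pairings against every other $f(x_j)$ while differing against $f(x_\ell)$, and conclude nonvanishing of $\sum f(x_j)$. The only cosmetic difference is that you package the final step through the independence-$\pmod{2}$ lemma of Section~2, whereas the paper performs the identical inner-product computation inline via its Subclaim~\ref{subclaim:indtcycle}.
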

\noindent {\bf Proof of Claim \ref{claim:linearlyindt}.} This amounts to prove
that $\sum_{i\in I}f(x_i)\not =0$ for every non-empty subset $I$ of
$\{0,\dots, n-1\}$. Let $I$ be such a subset. Let $m\in \{0, \dots, n-1\}$ such
that $x_m$ is the largest element of $\{x_i: i\in I\}$ in the acyclic tournament
$L$. 

\begin{subclaim}\label{subclaim:indtcycle}
  $(f(x_i) \vert f(z_m))=(f(x_i) \vert f(y_m))$ for each
  $i\in I\setminus \{m\}$.
\end{subclaim}
\noindent {\bf Proof of Subclaim~\ref{subclaim:indtcycle}.} By construction, we
have $x_{m}<_L z_m$ and $x_{m}<_L y_m$, hence by transitivity $x_{i}<_L z_m$ and
$x_{i}<_L y_m$.  If $i<m$ in the natural order then, by definition of $T$,
$(x_{i}, z_m)\in A(T)$ and $(x_{i}, y_m)\in A(T)$, thus
$(f(x_i) \vert f(z_m))=0=(f(x_i) \vert f(y_m))$, whereas if $i>m$ in the natural
order, then $(z_m, x_{i})\in A(T)$ and $(y_m, x_{i})\in A(T)$, thus
$(f(x_i) \vert f(z_m))=1=(f(x_i) \vert f(y_m))$, proving the subclaim. \hfill $\Box$

Since $(f(x_{m}) \vert f(z_{m}))=1$ and $(f(x_{m}) \vert f(y_{m}))=0$, it
follows that
$\sum_{i\in I}(f(x_i)\vert f(z_{m}))\not =\sum_{i\in I}(f(x_i)\vert
f(y_{m}))$. That is
$((\sum_{i\in I}f(x_i))\vert f(z_{m}))\not =((\sum_{i\in I}f(x_i))\vert
f(y_{m}))$. Thus the sum $\sum_{i\in I}f(x_i)\not =0$ as claimed.  \hfill $\Box$

We have  $n\leq r$. This proves the theorem. \hfill $\Box$

\acknowledgements
\label{sec:ack}
The authors thank a referee of this paper for his extremely careful and
detailed suggestions which significantly improved the presentation of this paper
and also the content (e.g., Theorem~\ref{thm-geometric}).  The third author
would like to thank Institut Camille Jordan, Universit\'e Claude Bernard Lyon 1
for hospitality and support. Support from CAPES Brazil (Processo: :
88887.364676/2019-00) and Labex MILYON: ANR-10-LABX-0070 are gratefully
acknowledged.  The first author is pleased to thank Uri Avraham, Adrian Bondy
and Christian Delhomm\'e for their
contribution.

\nocite{*}
\bibliographystyle{abbrvnat}
\bibliography{pst-dmtcs}
\label{sec:biblio}

\end{document}